\newtheorem{theorem}{Theorem}[section]
\newtheorem{proposition}{Proposition}[section]
\newtheorem{lemma}{Lemma}[section]
\newtheorem{corollary}{Corollary}[section]
\newtheorem{remark}{Remark}[section]
\theoremstyle{definition}
\newcommand{\C}{\varmathbb C}
\newcommand{\R}{\varmathbb R}
\newcommand{\Z}{\varmathbb Z}
\newcommand{\Q}{\varmathbb Q}
\newcommand{\N}{\varmathbb N}
\newcommand{\E}{\varmathbb E}
\newcommand{\parmi}[2]{\begin{pmatrix} #2 \\ #1 \end{pmatrix}}
\newcommand{\lap}{\bigtriangleup}
\DeclareMathOperator{\supp}{supp}
\renewcommand{\det}{\mbox{det}}
\newcommand{\an}[1]{\langle #1 \rangle}
\newcommand{\la}{\langle}
\newcommand{\ra}{\rangle}
\title[Stability of equilibria for Hartree-Fock]{Stability of homogeneous equilibria\\ of the Hartree-Fock equation,\\ for its equivalent formulation for random fields}
\date{}
\author{Charles Collot}
\address{Laboratoire Analyse, G\'eom\'etrie et Mod\'elisation,\\
CY Cergy Paris Universit\'e,\\
2 avenue Adolphe Chauvin, 95300, Pontoise, France}
\email{ccollot@cyu.fr}
\author{Elena Danesi}
\address{Universit\'a degli studi di Padova,\\
Dipartimento di Matematica,\\
Via Trieste, 63, 35131 Padova PD, Italy}
\email{edanesi@math.unipd.it}
\author{Anne-Sophie de Suzzoni}
\address{Centre Mathématique Laurent Schwartz,\\
\'Ecole Polytechnique, CNRS, Universit´e Paris-Saclay,\\
Palaiseau, 91128 Cedex, France}
\email{anne-sophie.de-suzzoni@polytechnique.edu}
\author{Cyril Malézé}
\address{Centre Mathématique Laurent Schwartz,\\
\'Ecole Polytechnique, CNRS, Universit´e Paris-Saclay,\\
Palaiseau, 91128 Cedex, France}
\email{cyril.maleze@polytechnique.edu}
\begin{document}

\maketitle

\begin{abstract}

The Hartree-Fock equation admits homogeneous states that model infinitely many particles at equilibrium. We prove their asymptotic stability in large dimensions, under assumptions on the linearised operator. Perturbations are moreover showed to scatter to linear waves. We obtain this result for the equivalent formulation of the Hartree-Fock equation in the framework of random fields. The main novelty is to study the full Hartree-Fock equation, including for the first time the exchange term in the study of these stationary solutions.
    
\end{abstract}

\section{Introduction}

\subsection{The Hartree-Fock equation in the framework of random fields} \label{subsec:time-dependent-hartree-fock}

We consider the time-dependent Hartree-Fock equation, which models large fermionic systems. We show it admits equilibria that are space and time translations invariant, which corresponds to an homogeneous electron gas. We prove their asymptotic stability under localised perturbations. We formulate the time-dependent Hartree-Fock equation in the setting of random fields (see \cite{dS} and the formal derivation in Subsection \ref{subsec:formal-derivation}):
\begin{equation}\label{Cauchyprob}
\left\{ \begin{array}{l l} & i\partial_t X = -\Delta X + (w*\E [|X|^2]) X - \int_{\R^d}  w(x-y)\E[ \overline{ X (y)} X(x)] X(y) \, dy ,\\
& X(t=0)=X_0
\end{array} \right.
\end{equation}
where $X:[0,T]\times \mathbb R^d \times \Omega\mapsto \mathbb C$ is a random field defined over a probability space $(\Omega, \mathcal A,\mathbb P )$, $w$ is an even pairwise interaction potential, $\E$ denotes the expectancy on $\Omega$. Equation \eqref{Cauchyprob} is an equivalent reformulation of the standard Hartree-Fock equation for density matrices
\begin{equation} \label{Cauchyprob-density-matrices}
i\partial_t \gamma=[-\Delta+w*\rho+\mathcal X, \gamma]
\end{equation}
for a nonnegative self-adjoint operator $\gamma$ on $L^2(\mathbb R^d)$ with kernel $k$, density $\rho(x)=k(x,x)$ and where $\mathcal X$ denotes the exchange term operator with kernel $-w(x-y)k(x,y)$. Indeed, for a given solution $X$ to \eqref{Cauchyprob}, if $\gamma$ denotes the operator whose kernel $k$ is the correlation function $k(x,y)=\mathbb E (X(x)\overline{X(y)})$, then $\gamma$ is a solution to \eqref{Cauchyprob-density-matrices}. And conversely, for a non-negative finite rank self-adjoint operator $\gamma =\sum_{i=1}^N |\varphi_i \rangle \langle \varphi_i|$ that is a solution of \eqref{Cauchyprob-density-matrices}, the associated random field $X=\sum_{i=1}^N g_i \varphi_i$, where $g_1,...,g_N$ are centered normalised and independent gaussian variables, solves \eqref{Cauchyprob} (this generalises to infinite rank self-adjoint operators using the spectral Theorem). The Cauchy problem for solutions to \eqref{Cauchyprob-density-matrices} emanating from localised initial data was solved in \cite{BDF,BDF2,chadam1976time,CG,Z}.

Our motivation to study the formulation \eqref{Cauchyprob} of the Hartree-Fock equation instead of \eqref{Cauchyprob-density-matrices} is to draw analogies with the nonlinear Schr\"odinger equation. The randomness in \eqref{Cauchyprob} lies in the use of the extra variable $\omega \in \Omega$ that we think of as a probability variable, which is a convenient way to represent the coupling for this system of equations. It is also convenient for obtaining representation formulas for the equilibria, see Subsection \ref{subsec:equilibria}. We remark that equation \eqref{Cauchyprob} is not a deterministic problem that we solve for a random data indexed by $\omega\in\Omega$. Instead, due to the presence of the expectancy $\E$, \eqref{Cauchyprob} is a system of (potentially infinite many) equations coupled by the variable $\omega$.  

The time-dependent Hartree-Fock equation is a mean-field equation for the dynamics of large Fermi systems. We recall in Subsection \ref{subsec:formal-derivation} a quick formal mean-field derivation of \eqref{Cauchyprob} from the many-body Schr\"odinger equation. The rigorous derivation of \eqref{Cauchyprob-density-matrices} in the mean-field limit was first done in \cite{BardosDerivation}, and was extended to the case of unbounded interaction potentials, as the Coulomb one, in \cite{FrohlichDerivationFermi}. Estimates for the convergence in the semi-classical limit that arises for confined Fermi systems were proved in \cite{EESY,benedikter2014mean}, and were extended recently to mixed states, and conditionnaly to more singular interaction potentials, in \cite{Benedikter2016Dec,porta2017mean}. Another derivation by different techniques, and including another large volume regime for long-range potentials, was given in \cite{Petrat2016Mar,BBPPT}.

The time-dependent Hartree-Fock equation \eqref{Cauchyprob} and \eqref{Cauchyprob-density-matrices} has received less attention than the static Hartree-Fock equation (obtained by setting $\partial_t X=0$ or $\partial_t \gamma=0$ in \eqref{Cauchyprob} or \eqref{Cauchyprob-density-matrices} respectively), or the reduced Hartree-Fock equation \eqref{Cauchyprob-reduced} and \eqref{Cauchyprob-density-matrices-reduced} (obtained by suppressing the last term in \eqref{Cauchyprob} or \eqref{Cauchyprob-density-matrices}, which is the exchange term). We mention that the exchange term in the Hartree equation does not often appear in the case of bosonic systems; it is absent in Bose--Einstein condensate. This is due to the form of the canonical wave function considered for bosonic systems. The form of the canonical wave function differs for fermionic systems, and the exchange term always appears. But this exchange term is often negligible compared to the direct term (see \cite{cances2014mathematical,Petrat2016Mar} for example), however in some cases it is relevant to keep it and study it \cite{Saffirio}.  Still the time-dependent Hartree-Fock equation is always a better approximation of the dynamics of the fermions than the reduced Hartree equation. 

\subsection{Stability of non-localised equilibria for the reduced Hartree-Fock equation}
Most of the results describing the dynamics have so far been obtained for the equations without exchange term 
\begin{equation}\label{Cauchyprob-reduced}
i\partial_t X = -\Delta X + (w*\E [|X|^2]) X ,
\end{equation}
and
 \begin{equation} \label{Cauchyprob-density-matrices-reduced}
i\partial_t \gamma=[-\Delta+w*\rho, \gamma],
\end{equation}
which are sometimes referred to as the reduced Hartree-Fock equation as in \cite{CDL}. Indeed, the contribution of the exchange term is negligible in certain regimes. It is indeed negligible in the semi-classical limit to the Vlasov equation, which was studied in \cite{GIMS98} for a system with a finite number of particles, and in \cite{Benedikter_2016}, \cite{benedikter2014mean} in the limit of number of particles going to infinity.


To obtain a better accuracy, instead of suppressing the exchange term, in Density Functional Theory one approximates it as a function of the density $\rho=\E[\lvert X\rvert^2]$. This leads to the Kohn-Sham equations. We refer to \cite{CF} for a review and \cite{J,SCB} for the Cauchy problem of time-dependent Kohn-Sham equations. The stability of the zero solution for the time-dependent Kohn-Sham equation was showed in \cite{PS}. 

The reduced Hartree equations \eqref{Cauchyprob-reduced} and \eqref{Cauchyprob-density-matrices-reduced} admit nonlocalised equilibria that models a space-homogeneous electron gas. The stability of such equilibria has been studied in \cite{CHP,CHP2,LS2,LS1} for Equation \eqref{Cauchyprob-density-matrices-reduced} and \cite{CdS,CdS2,H} for Equation \eqref{Cauchyprob-reduced}, under suitable assumptions for the potential. In \cite{NAKASTUD}, Hadama proved the stability of steady states for the reduced Hartree equation in a wide class, which includes Fermi gas at zero temperature in dimension greater than $3$, with smallness asumption on the potential function. In all these works, the linearised equation for the density around equilibrium (often called the linear response) is an invertible space-time Fourier multiplier. In a recent work \cite{NY23}, Nguyen and You proved that this Fourier multiplier could not be inverted in the case of the Coulomb interaction potential. However, they were still able to describe and to prove some time decay via dispersive estimates for the linearised dynamics around steady states. 

As observed in \cite{LS2020}, there is a natural way to associate nonlocalised equilibria of \eqref{Cauchyprob-density-matrices-reduced} to space-homogeneous equilibria of the Vlasov equation. In the same article, the authors proved that, in the high density limit, the Wigner transform (see \cite{LT93} for a survey on the Wigner transform) of solutions of \eqref{Cauchyprob-density-matrices-reduced} close to equilibria converge towards solutions of the Vlasov equation. Moreover, these solutions remain close, in a suitable sense, to the corresponding classical equilibria of the Vlasov equation (see \cite{LS2020}, Thm 2.22).
As mentioned before in this introduction, the exchange term is negligible in the semi-classical limit; then we expect that a similar result could be proven considering the Hartree equation with the exchange term. This will be the object of future works.

\subsection{Main result}

In this article we study, to our knowledge for the first time, non-linear asymptotic dynamics of the Hartree-Fock equation, including the exchange term. Our first result is that Equation \eqref{Cauchyprob} admits non-localised equilibria of the form
\begin{equation}\label{equilibria}
    Y_f=\int_{\mathbb R^d} f(\xi)e^{i(\xi x-\theta(\xi)t)}dW(\xi).
\end{equation}
Above, the momenta distribution function $f$ is any nonnegative $L^2$ function, the phase is 
\begin{equation} \label{id:def-theta}
    \theta(\xi)= \lvert \xi\rvert^2+ \int_{\mathbb R^d} w dx \, \int_{\mathbb R^d} f^2dx- (2\pi)^{d/2} (\hat w *f^2)(\xi),
\end{equation}
where $\hat w(\xi)=(2\pi)^{-d/2}\int e^{-i\xi x}w(x)dx$ is the Fourier transform of $w$, and $dW$ is the Wiener integral (see \cite{Janson} for more information on its construction) i.e. $dW(\xi)$ are infinitesimal centred Gaussian variables characterized by
\[
\mathbb E [\overline{dW(\xi)} dW(\xi')]=\delta(\xi-\xi')d\xi d\xi'.
\]
The probability law of the Gaussian field $Y_f$ is invariant under time and space translations. It is an equilibrium of \eqref{eqprinc} in the sense that it is a solution whose probability law does not depend on time. Complete details on these equilibria are given in Subsection \ref{subsec:equilibria}. We consider perturbations of these equilibria with initial data
\begin{equation} \label{id:initial-data}
X_{|t=0} = Y_f + Z_0 .
\end{equation}
We show asymptotic stability of the equilibrium via scattering to linear waves given by the evolution
$$
S(t)\, Z(x) = \frac{1}{(2\pi)^{d/2}} \int_{\mathbb R^d} e^{i(\eta x-\theta(\eta)t)}\hat Z(\eta)\, d\eta .
$$
The linear flow $S(t)$ disperses as does the Schrödinger linear flow, both pointwise and regarding Strichartz estimates. Details are given in Section \ref{sec:linear}. We set $\tilde \theta = -(2\pi)^{d/2} \hat w * f^2$ and assume
\begin{equation} \label{id:hp-regularite-tilde-theta}
\tilde \theta \in C^{d+2}\cap W^{d+2,\infty}\cap W^{4,1},
\end{equation}
and the uniform ellipticity assumption
\begin{equation} \label{id:uniform-ellipticity-theta}
 \eta^\top \, \nabla^{\otimes 2}\theta (\xi)\, \eta >\lambda_*|\eta|^2
\end{equation}
for all $\xi,\eta\in \mathbb R^d$, $\eta \ne 0$, for a constant $\lambda_*>0$. Above in \eqref{id:uniform-ellipticity-theta}, $\nabla^{\otimes 2}$ denotes the Hessian matrix and $a^\top b=\sum_{k=1}^d a_k b_k$. We denote by $\| \mu \|_M$ the total variation of a measure $\mu$.

\begin{theorem}\label{th:main} Let $d\geq 4$. Let $w$ be an even Borel measure with $\an{y} w$ a finite measure, and $g=f^2$ that satisfies $g \in W^{3,1}\cap W^{3,\infty}$ and $\langle \xi\rangle^{2\lceil s_c \rceil}g\in W^{2,1} $, be such that $\theta$ given by \eqref{id:def-theta} satisfies \eqref{id:hp-regularite-tilde-theta} and \eqref{id:uniform-ellipticity-theta}.

Then there exists $C(\| \nabla^{\otimes 3}\Tilde \theta \|_{L^{\infty}},\lambda_*)>0$, that is increasing in $\| \tilde \theta\|_{\dot W^{3,\infty}}$ and decreasing with $\lambda_*$, such that assuming
\begin{equation}\label{smallness-assumption}
\| \la y \ra w \|_{M} \| \nabla g \|_{W^{2,1}}\leq C(\| \nabla^{\otimes 3}\Tilde \theta \|_{L^{\infty}},\lambda_*),
\end{equation}
the following holds true. There exists $\delta>0$ such that if
\[
 \|Z_0\|_{L^2(\Omega,H^{s_c}(\R^d))\cap L^{2d/(d+2)}(\R^d,L^2(\Omega))}\leq \delta,
\]
 the Cauchy problem \eqref{Cauchyprob} with initial data \eqref{id:initial-data} has a solution $X=Y_f+Z$ with
\[
Z \in \mathcal C(\R, L^2(\Omega,H^{s_c}(\R^d)))
\]
and there exist $Z_{\pm} \in L^2(\Omega, H^{s_c}(\R^d))$ such that 
\[
X(t) = Y_f(t) + S(t) Z_\pm + o_{L^2(\Omega,H^{s_c}(\R^d))}(1)
\]
when $t\rightarrow \pm \infty$.
\end{theorem}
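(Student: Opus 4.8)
The plan is to linearise \eqref{Cauchyprob} around $Y_f$, separate a dispersive linear propagator from a ``linear response'' operator, show that the latter is a small perturbation thanks to \eqref{smallness-assumption}, and then run a small-data scattering argument in probabilistic Strichartz spaces. Writing $X = Y_f + Z$ and using that $Y_f$ solves \eqref{Cauchyprob}, that $\E[|Y_f|^2] = \int f^2$ is constant in $x$, and that $\E[\overline{Y_f(t,y)}\,Y_f(t,x)] = \int_{\R^d} f^2(\xi)\,e^{i\xi\cdot(x-y)}\,d\xi$ depends on $x-y$ only, one checks that the direct and exchange contributions of $Y_f$ on $Y_f$ reassemble exactly into the Fourier multiplier $\theta$ of \eqref{id:def-theta} (this is precisely what $\tilde\theta$ records); hence $Y_f(t) = S(t)Y_f(0)$ and the perturbation solves
\begin{equation*}
i\partial_t Z = \mathcal L_\theta Z + \mathcal L_1[Z] + \mathcal Q(Z) + \mathcal C(Z),
\end{equation*}
where $\mathcal L_\theta$ is the multiplier $\theta(\eta)$ (the generator of $S$), $\mathcal L_1[Z]$ gathers the terms linear in $Z$ that still carry a factor $Y_f$ (the direct response $(w*2\Re\,\E[\overline{Y_f}Z])\,Y_f$ and its exchange analogue), $\mathcal Q(Z)$ the terms quadratic in $Z$ with one factor $Y_f$, and $\mathcal C(Z) = (w*\E[|Z|^2])Z - \int_{\R^d} w(x-y)\E[\overline{Z(y)}Z(x)]Z(y)\,dy$ the cubic, $Y_f$-free part. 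The structural point is that $\mathcal L_1$ and $\mathcal Q$ depend on $Z$ only through the correlations $\E[\overline{Y_f}Z]$, $\E[\overline{Z}Y_f]$, $\E[|Z|^2]$, $\E[\overline{Z(y)}Z(x)]$, so the expectation converts the non-localised Gaussian field $Y_f$ into deterministic, spatially localised objects (by Cauchy--Schwarz in $\Omega$ against $\|f\|_{L^2}$).

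\emph{Treating the linear response.} Applying the Duhamel formula with $S$ and projecting onto the correlation variables, the contribution of $\mathcal L_1$ becomes a convolution-in-time (Volterra) operator acting on $Z$: its time kernel decays because $S(t)Y_f(0)$ disperses, at a rate dictated by the uniform ellipticity \eqref{id:uniform-ellipticity-theta} through stationary phase together with the regularity \eqref{id:hp-regularite-tilde-theta}, and an integration by parts throwing one derivative onto $g = f^2$ (as in Landau-damping computations) shows its total size is controlled by $\|\la y\ra w\|_M\,\|\nabla g\|_{W^{2,1}}$. The smallness assumption \eqref{smallness-assumption} then makes this Volterra operator a contraction, so it can be absorbed as a small perturbation (equivalently, $I$ minus the operator is inverted by a Neumann series) into a modified propagator $\widetilde S$ that still obeys the pointwise decay and Strichartz bounds of Section~\ref{sec:linear}. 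This is the step I expect to be the main obstacle: unlike in the reduced case \cite{CHP,CdS,NAKASTUD}, where the response is a single scalar space--time Fourier multiplier, the exchange term makes it kernel-valued (a family parametrised by a momentum variable), and proving the decay and smallness of that kernel uniformly in the parameter, using only the finitely many derivatives on $w$ and $g$ permitted by the hypotheses, is the genuinely new difficulty created by the exchange term.

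\emph{Nonlinear scattering.} It then remains to solve $Z(t) = \widetilde S(t)Z_0 - i\int_0^t \widetilde S(t-s)\bigl(\mathcal Q(Z) + \mathcal C(Z)\bigr)(s)\,ds$ by a fixed point in a space $\mathcal X$ built from $L^\infty_t L^2(\Omega, H^{s_c}(\R^d))$, the probabilistic Strichartz norms $L^q_t W^{s_c,r}_x L^2(\Omega)$ for Schrödinger-admissible $(q,r)$, and an auxiliary norm at the $L^2$-scaling tuned to the $L^{2d/(d+2)}(\R^d,L^2(\Omega))$ part of the data (the quantity controlling the perturbed density $\E[|Z|^2]$). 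The cubic term $\mathcal C(Z)$ is estimated by Young's inequality against the finite measure $w$, Hölder in $\Omega$ and Strichartz, exactly as for the Hartree equation, and closes for $d\ge 4$ at the critical exponent $s_c$, the hypothesis $\la\xi\ra^{2\lceil s_c\rceil} g\in W^{2,1}$ supplying the regularity of $Y_f$ needed in $\mathcal Q$; each quadratic term in $\mathcal Q(Z)$ carries one factor $Y_f$, which after taking $\E$ is localised and estimated with smallness of $\|\la y\ra w\|_M$, so, being only quadratic, it borrows smallness from $\delta$ or from \eqref{smallness-assumption}. For $\delta$ small the map contracts on a ball of $\mathcal X$, giving the global solution $Z\in\mathcal C(\R, L^2(\Omega, H^{s_c}(\R^d)))$. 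Finally, the same multilinear bounds show that $\widetilde S(-t)Z(t)$ is Cauchy in $L^2(\Omega, H^{s_c}(\R^d))$ as $t\to\pm\infty$; setting $Z_\pm$ equal to these limits and using that $\widetilde S$ and $S$ differ by the inverted response, which also scatters, yields $X(t) = Y_f(t) + S(t)Z_\pm + o_{L^2(\Omega,H^{s_c}(\R^d))}(1)$, as claimed.
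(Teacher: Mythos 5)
Your overall strategy (perturbative inversion of the linearised response using \eqref{smallness-assumption}, followed by small-data critical scattering in probabilistic Strichartz spaces) is the right one, and you correctly identify the genuinely new difficulty: because of the exchange term the linear response is no longer a scalar space--time Fourier multiplier. However, the specific way you propose to invert the response --- as a Volterra operator acting directly on $Z$, absorbed into a modified propagator $\widetilde S$ by a Neumann series --- has a gap at exactly the critical step. The paper instead promotes the full two-point correlation perturbation $V(x,z)=\E[\overline{X}(x+z)X(x)-\overline{Y}(x+z)Y(x)]$ to an independent unknown and solves a coupled system for $(Z,V)$; the linear part is then block upper-triangular, and only the block $L_3+L_4$ acting on $V$ needs to be small. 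The smallness \eqref{smallness-assumption} (note that it involves $\|\nabla g\|_{W^{2,1}}$, not a norm of $g$ itself) is produced precisely by the difference structure $g(\xi+\zeta)-g(\xi)$ that appears when one composes in the order ``Duhamel first, then correlate against $Y$''; this difference is what your integration by parts converts into $\nabla g$. In your order of composition --- correlate against $Y$ first, then apply Duhamel --- no such difference appears: the resulting $Z\to Z$ operator carries a factor of order $\|w\|_M\,\|g\|_{L^1}$ times the constants from the dispersive estimates, and $\|g\|_{L^1}$ is explicitly allowed to be large in the theorem. So your Volterra operator is not a contraction under \eqref{smallness-assumption}, and the Neumann series defining $\widetilde S$ need not converge.

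There is a second, related obstruction. Your construction needs the map $Z\mapsto\E[\overline{Y(\cdot+y)}Z+\overline{Z(\cdot+y)}Y]$ to be bounded from the solution space for $Z$ into the space on which the response is then estimated. That correlation space must carry $\dot H^{-1/2}$ regularity at low frequencies (the space $B_2^{-1/2,s_c}$ in the paper), because the operators sending correlations back to random fields lose half a derivative there. This half-derivative gain is not available for $\E[\overline{Y}Z]$ with a general $Z$ in the Strichartz space: it is recovered only for Duhamel-type expressions $\int_0^t S(t-\tau)[UY]\,d\tau$ via an averaged dispersive estimate (Proposition \ref{prop:magicProposition}) that exploits the integration over the momenta of the equilibrium, and by duality for the source term $\E[\overline{Y}S(t)Z_0]$ --- whence the extra hypothesis $Z_0\in L^{2d/(d+2)}_xL^2_\omega$, which your sketch does not exploit. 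Without this gain, even the boundedness (let alone the smallness) of your composite response on $Z$ does not close. These two points are the reason the paper works with the coupled $(Z,V)$ system and inverts $1-L_3-L_4$ on the correlation space, rather than constructing a modified propagator for $Z$ alone.
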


\begin{remark}

The solution of Theorem \ref{th:main} is unique in a certain class of global in time perturbations: that which are small in the space defined by \eqref{normZ} and whose density is small in the space given by \eqref{normV}. This is  as a by product of our proof by Banach fixed point. We believe uniqueness holds in $\mathcal C(\R, H^s) \cap L^{2\frac{d+2}{d}}(\R, W^{s_c,2\frac{d+2}{d}}(\R^d))$ without smallness assumption by a local well-posedness argument similar to that for the cubic NLS, see for instance \cite{GinibreVelo}.

\end{remark}

\noindent \textbf{Comments on the result} 

\smallskip

\noindent \emph{On the optimality of the regularity assumptions.} We remark that $s_c$ is the critical Sobolev regularity for the cubic Schrödinger equation in dimension $d$. Thus, our regularity assumption on $Z_0$ seems optimal.

\medskip

\noindent \emph{On the perturbative nature of the result, compared with the reduced Hartree-Fock equations.} This result is perturbative since the equilibrium has to satisfy the smallness assumption \eqref{smallness-assumption}, which contrasts with the previous results of the first and third author, \cite{CdS,CdS2}. The reason for this is twofold: because of the exchange term, first, the dispersion relation $\theta$ can loose its ellipticity, and second, the linearised equation for the density $\rho$ around the equilibrium is no longer a Fourier multiplier, see Section \ref{sec:linear}. We rely on perturbative arguments to invert it. However, Theorem \ref{th:main} covers "large" equilibria, in the sense that $\| g\|_{L^1}$ can be large so that the density is large.

\medskip

\noindent \emph{Some examples covered by the result.} We now describe a physically relevant example of such $g$ for which Theorem \ref{th:main} applies. For Fermi gases at density $\rho$ and positive temperature, the function $g$ depends on the temperature $T$ and the chemical potential $\mu$ as
\[
g[\rho,T,\mu] (\xi) = \rho C_{\frac{\mu}{T}} T^{-\frac d2} \frac1{e^{(|\xi|^2-\mu)/T}+1}
\]
where $C_{\tilde \mu}=(\int \frac{d\xi}{e^{|\xi|^2-\tilde \mu}+ 1})^{-1}$. We can figure several ways of satisfying the assumptions \eqref{id:uniform-ellipticity-theta} and \eqref{smallness-assumption}. Indeed one estimates that for $s=1,2,3$
\[
\| \nabla^s g \|_{L^1} =\rho T^{-\frac s2} \frac{\|  \nabla^s \frac{1}{e^{|\xi|^2-\mu/T}+ 1}\|_{L^1}}{\| \frac{1}{e^{|\xi|^2-\mu/T}+ 1}\|_{L^1}}\approx \rho T^{-\frac s2} \left(\frac{\mu}{T} \right)_+^{\frac s2 -1}
\]
and for $s=2,3$
\begin{align*}
\| \nabla^s \tilde \theta \|_{L^\infty} & \leq \| w\|_{M}\| \nabla^s g\|_{L^1}\lesssim \rho T^{-\frac s2} \left(\frac{\mu}{T} \right)_+^{\frac s2 -1}.
\end{align*}
The hypotheses of the Theorem are then met for a small enough density at fixed temperature and chemical potential, or at fixed density letting $T$ go to infinity while maintaining $\frac\mu{T}$ negative or constant positive.

\medskip

\noindent \textbf{Comments on the proof}

\smallskip

As already mentionned, the result is perturbative and hence so is the proof. The articulation is the following, in Section \ref{sec:settingContraction} we write the equation as a system of two equations: one equation describes the full pertubation as a linear equation depending on its correlations, the other equation describes the evolution of the correlations. Contrary to \cite{CdS,CdS2}, one needs to describe the full set of correlations, that is $\E(\bar X(x) X(y))$ and not only the diagonal $\E(|X(x)|^2)$, see Section \ref{sec:settingContraction}. 

The treatment of the nonlinear terms is similar to \cite{CdS}. The novelty is that we have two space variables instead of one. 

What mostly differs is the treatment of the linearised around the equilibrium part. It requires proving Strichartz estimates for a propagator that is adapted to the exchange term, see Subsection \ref{subsec:Strichartz}. The rest of the treatment of the linearised equation is based on explicit computations and functional analysis see Subsection \ref{subsec:formulas} and Proposition \ref{pr:continuity-L4}.

\medskip

\noindent \textbf{Organisation of the paper}

\smallskip

In the rest of the introductive section, we formally derive the Hartree equation with exchange term and we explain what are its equilibria. In Section \ref{sec:notations}, we give useful notations and conventions used throughout the paper. In Section \ref{sec:settingContraction}, we rewrite the problem as a contraction argument and we specify the functional framework. In Section \ref{sec:linear}, we treat the linearised equation around its equilibria. In Section \ref{sec:bilinear}, we prove bilinear estimates that are sufficient to close the contraction argument. Finally, in Section \ref{sec:fixed-point}, we perform the contraction argument and give the final arguments to prove Theorem \ref{th:main}.

\subsection{Formal derivation of the equation from the \texorpdfstring{$N$}{N} body problem}

\label{subsec:formal-derivation}

In this subsection, we present a quick and formal derivation of Equation \eqref{Cauchyprob} from the many-body Schr\"odinger equation. The references for rigorous results are given in Subsection \ref{subsec:time-dependent-hartree-fock}. We consider $N$ particles, represented by a wave function $\psi : \R^{dN} \rightarrow \C$, with binary interactions through a pair potential $w$. The Hamiltonian of the system is
\begin{eqnarray*}
     \mathcal{E}_N(\psi) & = & -\int_{\R^{dN}}\Bar{\psi}\sum_{i=1}^N \Delta_i\psi \, d\underline x +\int_{\R^{dN}}\sum_{i<j}w(x_i-x_j)\lvert \psi\rvert^2 \, d\underline{x}\\
       & = & \mathcal{E}_{kin}+\mathcal{E}_{pot}
\end{eqnarray*}

where $\underline x=(x_1,...,x_N)\in \mathbb R^{dN}$, each $x_i$ belongs to $\R^d$ and $\Delta_i$ is the Laplacian in the variable $x_i$. We consider for fermions wave functions of the form of a Slater determinant
\[
\psi(\underline x)=\frac{1}{\sqrt{N!}}\sum_{\sigma\in \mathfrak{S}_N}\varepsilon(\sigma)^\iota\prod_{j=1}^N u_{\sigma(j)}(x_j),
\]
where $(u_i)_{1\leq i\leq N}$ is an orthonormal family in $L^2(\R^d)$, and the sum is over permutations $\sigma$ of $\{1,N\}$ whose signature is denoted by $\varepsilon (\sigma)$. We recall that these standard Ansätze are driven by the ideas that bosons are indiscernible particles and that fermions satisfy Pauli's exclusion principle, which translates to the symmetry and skew symmetry of $\psi$ respectively. Under these Ans\"atze, the expression of the Hamiltonian simplifies. Indeed, the potential energy becomes
\begin{eqnarray*}
\mathcal{E}_{pot}(\psi)&=&\int_{\R^{dN}}\frac{1}{N!}\sum_{i<j}w(x_i-x_j)\sum_{\sigma_1,\sigma_2\in\mathfrak{S}_N}\varepsilon(\sigma_1\sigma_2)\prod_{k=1}^N\overline{u_{\sigma_1(k)}(x_k)}u_{\sigma_2(k)}(x_k)\, d \underline x \\
&= & \frac{1}{N!}\sum_{\sigma_1,\sigma_2\in\mathfrak{S}_N}\varepsilon(\sigma_1\sigma_2) \sum_{i<j}\an{u_{\sigma_1(i)}\bar u_{\sigma_2(i)}, w* (\bar u_{\sigma_1(j)} u_{\sigma_2(j)})}\prod_{k\neq i,j} \an{u_{\sigma_1(k)}, u_{\sigma_2(k)}}
\end{eqnarray*}
where the scalar product is taken into $L^2(\R^d)$. Because the family $(u_k)_{1\leq k \leq N}$ is orthonormal we get that
and
\[
\prod_{k\neq i,j} \an{u_{\sigma_1(k)}, u_{\sigma_2(k)}} = \left \lbrace{\begin{array}{l l l}
1     & \quad \textrm{if } \sigma_1 = \sigma_2 \textrm{ or } \sigma_2 = \sigma_1 \circ (ij), \\
0     & \quad \textrm{otherwise} .
\end{array}}\right. 
\]
This further yields
\begin{eqnarray*}
\mathcal{E}_{pot}(\psi)&=& \sum_{i<j}\Big(\an{|u_{\sigma_1(i)}|^2, w* | u_{\sigma_1(j)}|^2} -\an{u_{\sigma_1(i)}\bar u_{\sigma_1(j)}, w* (\bar u_{\sigma_1(j)} u_{\sigma_1(i)})} \Big) .
\end{eqnarray*}
We perform the change of variables $i' = \sigma_1(i)$ and $j' = \sigma_1(j')$ and finally get
\[
\mathcal{E}_{pot}(\psi)=\frac12 \sum_{i'\neq j'}\Big(\an{|u_{i'}|^2, w*  |u_{j'}|^2 } - \an{u_{i'}\bar u_{j'}, w* (\bar u_{j'} u_{i'})} \Big) .
\]
A similar and simpler computation shows that the kinetic energy is
$$
\mathcal{E}_{kin}(\psi)=-\sum_{j'=1}^N\an{u_{j'},\lap u_{j'}} .
$$
The Hamiltonian is therefore
\[
\mathcal{E}_N(\psi)=-\sum_{j'=1}^N\an{u_{j'},\lap u_{j'}} 
 +\frac12 \sum_{i'\neq j'}\Big(\an{|u_{i'}|^2, w*  |u_{j'}|^2 } - \an{u_{i'}\bar u_{j'}, w* (\bar u_{j'} u_{i'})} \Big) .
\]

In the limit of large number of particles $N$, we formally replace the sum $\sum_{i'\neq j'}$ by $\sum_{i',j'}$ and we arrive at the following system of evolution equations for $(u_1,...,u_N)$:

\begin{equation}
i\partial_tu_j(x)=-\Delta u_j(x)+ \sum_{k}\int_{\mathbb R^d}  w(x-y)\Big(\lvert u_k(y)\rvert^2u_j(x) - \overline{u_k(y)}u_j(y) u_k(x)\Big)\, dy,
\end{equation}
for $j=1,...,N$.

One can recast the above system as an equation on random fields. Indeed, considering $X:\R\times\R^d\times\Omega\to \C$ a time dependent random field over $\R^d$ , where $(\Omega,\mathcal{A},d\omega)$ is the underlying probability space, of the form $X(x,\omega):=\sum_k u_k(x)g_k(\omega)$ where $(g_k)_{1\leq k\leq N}$ is an orthonormal family of $L^2(\Omega)$. Then $X$ is a solution of
\begin{equation}\label{eqprinc}
    i\partial_t X=-\Delta X+\Big(w*\E[\lvert X\rvert^2]\Big)X-\int_{\mathbb R^d} w(\cdot-y)\E[\overline{X(y)}X(\cdot)]X(y)dy.
\end{equation}

\subsection{Equilibria} \label{subsec:equilibria}

We briefly study in this subsection the equilibria
$$
Y_f=\int_{\mathbb R^d} f(\xi)e^{i(\xi x-\theta(\xi)t)}dW(\xi),
$$
and show they are solutions to \eqref{Cauchyprob}. Note that $f\in L^2(\mathbb R^d)$ can be chosen real and nonnegative without loss of generality. We set
\[
g=f^2
\]
and decompose the phase
\begin{equation}
    \theta(\xi)= \lvert \xi\rvert^2+\tilde \theta(\xi)+\theta_0
\end{equation}
where
\begin{equation} \label{id:def-tildetheta-theta0}
\tilde \theta = -(2\pi)^{\frac d2} \hat{w}*g \qquad \mbox{and} \qquad \theta_0=(2\pi)^d \hat{w}(0)\hat{g}(0).
\end{equation}
The function $Y_f$ is a Gaussian field, whose law is invariant under spatial translations, and its correlation function is for all $t\in \R, x,y \in \R^d$,
\begin{equation} \label{id:correlation-function-Yf}
\mathbb E [\overline{Y_f(t,x)}Y_f(t,y)]=\int_{\mathbb R^d} f^2(\xi)e^{i \xi(y- x)}d\xi = (2\pi)^{d/2}\hat{g}(x-y).
\end{equation}
This formula can be used to show that $Y_f$ is a solution to \eqref{eqprinc}. Indeed, it gives
\[
\Big(w*\E[\lvert Y_f\rvert^2]\Big)(x)= (2\pi)^d \hat w(0)\hat g(0)
\]
as well as
\begin{align*}
 \int_{\mathbb R^d} w(x-y)\E[\overline{Y_f(y)}Y_f(x)]Y(y)dy& = \int_{\mathbb R^{3d}} w(x-y) g(\xi')e^{i\xi'(x-y)} f(\xi)e^{i(\xi y-\theta(\xi)t)} d\xi' dW(\xi) dy \\
 &= (2\pi)^{\frac d2} \int \hat w (\xi-\xi') g(\xi') f(\xi)e^{i(\xi x-\theta(\xi)t)}  d\xi' dW(\xi)
 \\
 &= (2\pi)^{\frac d2} (\hat w*g)(D) Y_f
\end{align*}
and the result follows by injecting these two identities in \eqref{eqprinc}. In addition, the law of $Y_f$ is invariant under time translations, making it an equilibrium of the equation. Relevant equilibria to the present article are discussed in the comments after Theorem \ref{th:main}.

\subsection{Acknowledgments}

This result is part of the ERC starting grant project FloWAS that has received funding from the European Research Council (ERC) under the European Unions Horizon 2020 research and innovation program (Grant agreement No. 101117820). This work was supported by the BOURGEONS project, grant ANR-23-CE40-0014-01 of the French National Research Agency (ANR). Charles Collot is supported by the CY Initiative of Excellence Grant ``Investissements d'Avenir" ANR-16-IDEX-0008 via Labex MME-DII, and by the ANR grant ``Chaire Professeur Junior"  ANR-22-CPJ2-0018-01.
Anne-Sophie de Suzzoni is supported by the S. S. Chern Young Faculty Award funded by AX.
Elena Danesi is supported by the INDAM-GNAMPA Project CUP-E53C22001930001.

\section{Notation}\label{sec:notations}

\noindent The scalar product on $\mathbb R^d$ is denoted by
$$
\xi x=\sum_{1}^d \xi_i x_i.
$$
Our notation for the Fourier transform is
$$
\hat f(\xi) =\mathcal F f (\xi)=(2\pi)^{-\frac d2}\int_{\mathbb R^d} e^{-ix\xi}f(x)\, dx.
$$
Fourier multipliers by a symbol $s$ are denoted by $s(D)$ and defined by
$$
\mathcal F(s(D) f)(\xi)=s(\xi)\hat f (\xi).
$$
\\
By $\nabla^{\otimes 2}$, we denote the Hessian matrix.
\\
In order to lighten the notations we denote by $2^{\N}$ the set $\{ 2^n, n \in \N \}$.
\\
For $z \in \R^d$, we write $T_z$ the translation such that for any function $h$ and any $x\in \R^d$, $T_z h(x) = h(x+z)$.
\\
For $p,q\in [1,\infty]$ and for $s\in\R$ we denote $L_t^pW_x^{s,q}L_\omega^2$ the space $$(1-\Delta_x)^{-\frac{s}{2}}L^p(\R,L^q(\R^d,L^2(\Omega))),$$
with the norm: $$\lvert\lvert u\rvert\rvert_{L_t^pW_x^{s,q}L_\omega^2}=\lvert \lvert \langle \nabla\rangle^{s}u\rvert\rvert_{L_t^pL_x^{q}L_\omega^2}.$$
\\
In the case $q=2$ we also write $L_t^pH_x^s, L_\omega^2=L_t^pW_x^{s,2},L_\omega^2.$ 
\\
For $p,q\in[1,\infty],\ s,t\in\R$ we denote by $L^p_tB_q^{s,t}L_\omega^2=L^pB_q^{s,t}L_\omega^2$ the space induced by the norm: $$\lvert\lvert u\rvert\rvert_{L^pB_q^{s,t}L_\omega^2}=\bigg \lVert \bigg(\sum_{j<0}2^{2js}\lvert\lvert u_j\rvert\rvert^2_{L_x^qL_\omega^2}+\sum_{j\geq 0}2^{2jt}\lvert\lvert u_j\rvert\rvert^2_{L_x^qL_\omega^2}\bigg)^{\frac{1}{2}}\bigg\rVert_{L^p(\R)}.$$
\\
We denote by $M$ the space of finite signed Borel measures and endow it by the total variation $\| \cdot \|_M$.

\section{Setting the contraction argument}\label{sec:settingContraction}

In this section, we write the problem at hand, namely solving a Cauchy problem and a scattering problem, as a fixed point problem. This fixed point problem will be solved using a contraction argument in Section \ref{sec:fixed-point}, and the proof of Theorem \ref{th:main} will follow. Because $Y_f$ is not localised and thus not in any Sobolev space, neither is $X$, and we choose as a variable for the fixed point not the full solution $X$ but its perturbation around $Y_f$, namely $Z = X - Y_f$.

We fix an equilibrium $Y$, and drop the $f$ subscript to lighten the notation. We consider a perturbed solution $X=Y+Z$ to \eqref{eqprinc}. We expand using \eqref{id:correlation-function-Yf} and \eqref{id:def-tildetheta-theta0}
\begin{align*}
\mathbb E [w*|X|^2]X-\mathbb E[w*|Y|^2]Y  & =\mathbb E[w*|Y|^2] Z+ \mathbb E [w*(|X|^2-|Y|^2|)]X \\
&= \theta_0 Z+ \mathbb E [w*(|X|^2-|Y|^2|)]X
\end{align*}
and
\begin{align*}
&\int_{\mathbb R^d} w(x-y) \mathbb E [\overline{X(y)}X(x)]X(y)\, dy - \int_{\mathbb R^d} w(x-y) \mathbb E [\overline{Y(y)}Y(x)]Y(y)\, dy  \\
&\quad = \int_{\mathbb R^d} w(x-y) \mathbb E[\overline{Y(y)}Y(x)]Z(y)\, dy +\int_{\mathbb R^d} w(x-y) \mathbb E[\overline{X(y)}X(x)-\overline{Y(y)}Y(x)]X(y)\, dy \\
&\qquad = \tilde \theta(D) Z +\int_{\mathbb R^d} w(x-y) \mathbb E[\overline{X(y)}X(x)-\overline{Y(y)}Y(x)]X(y)\, dy.
\end{align*}

Hence $Z$ satisfies
\begin{eqnarray*}
    i\partial_t Z & = & \theta (D) Z + \mathbb E [w*(|X|^2-|Y|^2|)]X-\int_{\mathbb R^d} w(\cdot -y) \mathbb E[\overline{X(y)}X(\cdot )-\overline{Y(y)}Y(\cdot )]X(y) \, dy.
\end{eqnarray*}

We introduce the perturbation of the two-point correlation function
\begin{align}
\label{id:def-V} V(x,z) & =\E [\overline{X}(x+z)X(x)-\overline{Y}(x+z)Y(x)]\\
\nonumber &=\E[\overline{Y(x+z)}Z(x)+\overline{Z(x+z)}Y(x)+\overline{Z(x+z)}Z(x)].
\end{align}

The evolution equation for $Z$ becomes
\[
i\partial_t Z= \theta(D) Z+\Big(w*V(\cdot,0)\Big)(Y+Z)-\int_{\mathbb R^d}  w(z)V(x,z)(Y+Z)(x+z) \, dz.
\]

Introducing the group $S(t)=e^{-it\theta (D)}$, we obtain

\begin{align*}
       &  Z  =  S(t)Z_0 -i\int_0^t S(t-\tau)\Big[ \big(w*V(\cdot,0)\big)Y - \int w(z)V(x,z)Y(x+z)\, dz \Big]d\tau\\
       & \qquad \qquad -i\int_0^tS(t-\tau)\Big[ \big(w*V(\cdot,0)\big)Z - \int w(z)V(x,z)Z(x+z)\, dz\Big]d\tau.
\end{align*}
This can be written under the form
\begin{equation} \label{id:Z-point-fixe}
Z= S(t)Z_0+L_1(V)+ L_2(V) +Q_1(Z,V)+Q_2(Z,V)
\end{equation}
where the linearised operators are
\begin{align*}
& L_1(V) = -i\int_0^t S(t-\tau)\big[ (w*V(\cdot,0))Y \big]d\tau,\\
&    L_2(V) = i\int_0^t S(t-\tau)\big[ \int dz w(z)V(x,z)Y(x+z) \big]d\tau,
\end{align*}
and the quadratic terms are
\begin{align*}
 &   Q_1(Z,V) = -i\int_0^t S(t-\tau)\big[ (w*V(\cdot,0))Z \big]d\tau,\\
  &  Q_2(Z,V) = i\int_0^t S(t-\tau)\big[ \int dz w(z)V(x,z)Z(x+z) \big]d\tau.
  \end{align*}
The perturbed correlation function $V$ is given by
\begin{align} \label{id:V-point-fixe}
V= & \; \E(\overline{Y(x+y)}S(t)Z_0(x)+\overline{S(t)Z_0(x+y)}Y(x))\\ \nonumber
& +L_3(V)+L_4(V)\\ \nonumber
& +Q_3(Z,V)+Q_4(Z,V) + \E(\bar Z(x+y) Z(x))
\end{align}
where the corresponding linearized operators and quadratic terms are for $k=3,4$,
\begin{eqnarray*}
    L_{k}(V)&=& \E[\overline{Y(x+y)}L_{k-2}(V)(x)]+\E[\overline{L_{k-2}(V)(x+y)}Y(x)],\\
    Q_{k}(Z,V)&=& \E[\overline{Y(x+y)}Q_{k-2}(Z,V)(x)]+\E[\overline{Q_{k-2}(Z,V)(x+y)}Y(x)].
\end{eqnarray*}

Combining \eqref{id:Z-point-fixe} and \eqref{id:V-point-fixe} we arrive at the following fixed point equation for $(Z,V)$
\begin{equation} \label{ptfixe2}
\begin{pmatrix} Z\\ V\end{pmatrix} = \mathcal A_{Z_0} \begin{pmatrix} Z\\ V\end{pmatrix} = \begin{pmatrix} \mathcal A^{(1)}_{Z_0}(Z,V))  \\
\mathcal A^{(2)}_{Z_0}(Z,V)
    \end{pmatrix} 
\end{equation}
where we have set
\begin{align*}
& \mathcal A^{(1)}_{Z_0}(Z,V)= S(t)Z_0+L_1(V)+L_2(V)+Q_1(Z,V)+Q_2(Z,V),\\
& \mathcal A^{(2)}_{Z_0}(Z,V) =
\E[\overline{Y(x+y)}S(t)Z_0(x)+\overline{S(t)Z_0(x+y)}Y(x)] \\
&\qquad \qquad \qquad+L_3(V)+L_4(V)+Q_3(Z,V)+Q_4(Z,V)+\E[\overline{Z(x+y)}Z(x)].
\end{align*}
We solve the fixed point equation \eqref{ptfixe2} via a contraction argument for the application $\mathcal A_{Z_0}$ in the following Banach spaces for $(Z,V)$:
\begin{align}\label{spaceZ}
 E_Z = \mathcal C(\R, L^2(\Omega,H^{s_c}(\R^d))) \cap L^p(\R,W^{s_c,p}(\R^d,L^2(\Omega)))\\ \nonumber
\cap L^{d+2}(\R\times \R^d,L^2(\Omega)) \cap L^4(\R, L^q(\R^d,L^2(\Omega))),\\\label{spaceV}
 E_V = \mathcal C(\R^d,L^{\frac{d+2}{2}}(\R\times \R^d))\cap \mathcal C(\R^d,L^2(\R, B_2^{-\frac12,s_c}(\R^d))))  
\end{align}
where $p = 2\frac{d+2}{d}$, $s_c = \frac{d}2 -1$, $q = \frac{4d}{d+1}$. We endow $E_V$ and $E_Z$ with the norms
\begin{align}\label{normZ}
\|\cdot\|_Z = \|\cdot \|_{L^\infty(\R, L^2(\Omega,H^s(\R^d)))} +  \|\cdot \|_{L^p(\R,W^{s_c,p}(\R^d,L^2(\Omega)))} \\\nonumber 
+ \|\cdot\|_{L^{d+2}(\R\times \R^d,L^2(\Omega))} + \|\cdot\|_{ L^4(\R, B_q^{0,\frac14}(\R^d,L^2(\Omega)))},\\\label{normV}
 \|\cdot\|_V = \|\cdot\|_{\mathcal{C}(\R^d,L^{\frac{d+2}{2}}(\R\times \R^d))} + \|\cdot\|_{\mathcal{C}(\R^d, L^2(\R, B_2^{-\frac12,s_c}(\R^d)))}.
\end{align}

\begin{remark} The spaces and norms we chose are driven by the following considerations. The regularity $s_c$ is the critical regularity of the cubic Schrödinger equation in dimension $d$. The choice of the Lebesgue exponents $p$ and $d+2$ for $Z$ and $\frac{d+2}{2}$ and $2$ for $V$ are the ones required to put $Q_1$ and $Q_2$ in the target space for the solution $\mathcal C(\R,L^2(\Omega,H^{s_c}(\R^d)))$. The regularity in the low frequencies for $V$, namely the $-\frac12$ in $B_2^{-\frac12,s_c}$ is due to a low frequencies singularity that we see appearing in Proposition \ref{prop:magicProposition}.

The choice of the $L^\infty$ norm in the variable $y$ is due to the fact that Lebesgue and Sobolev norms are invariant under the action of translations and thus the norms of $V$ in $x$ should be uniformly bounded in the variable $y$. 

\end{remark}

\section{Linear estimates}\label{sec:linear}

\subsection{Strichartz estimates}\label{subsec:Strichartz}

\begin{proposition}[Strichartz estimates] \label{pr:Strichartz-free-evolution}
Let $\theta$, $\tilde \theta$ defined as in \eqref{id:def-theta}, \eqref{id:def-tildetheta-theta0} such that 

\begin{enumerate}[i)]
    \item $\theta$ satisfies the ellipticity assumption \eqref{id:uniform-ellipticity-theta},
    \item $\tilde \theta \in C^{d+2}(\R^d) \cap  W^{d+2, \infty} (\R^d)$.
\end{enumerate}

Let $(p,q) \in [2, \infty]^2$ such that
\[
\frac 2p + \frac dq = \frac d2, \qquad (p,q) \ne (2, \infty) \text{ if } d=2.
\]
Then there exists a constant $C = C(p,q,d) $ such that for any $u_0 \in L^2(\R^d)$,
\[
\big \lVert e^{-it \theta(D)} u_0 \big \rVert_{L^p_t L^q_x} \le C \lVert u_0 \rVert_{L^2_x}.
\]
\end{proposition}

\begin{remark}

The regularity assumption ii) is satisfied, for example, in the case of a very short range interaction potential $\la y \ra^{d+2} w\in M$, or in the case of an integrable potential $w\in M$ and of a smooth distribution function $g\in W^{d+2,1}$. The uniform ellipticity is then satisfied, for example, if the interaction potential and the density are small enough $\| \la y \ra^2 w\|_{M}\| g\|_{L^1}<C(d)$ or the distribution function is spread enough $\| w\|_M\| \nabla^2 g\|_{L^1}<C(d)$, respectively.

The ellipticity \eqref{id:uniform-ellipticity-theta} can be false at high densities, no matter the interaction potential. Indeed, consider a fixed potential $w$ and for $\rho>0$ an equilibrium $g=\rho g^*$, with $g^*$ and $w$ both Schwartz and nonzero. We have $ \theta=\theta_0+|\xi|^2-(2\pi)^{d/2}\rho \hat w* g^*$. Since there exists a point at which the Hessian of $ \hat w* g$ is not nonnegative, for large $\rho$ the Hessian of $\theta$ is not positive definite at that point.

The failure of the ellipticity \eqref{id:uniform-ellipticity-theta} would lead to different linearized dynamics, and would strongly differ from the reduced Hartree equation \eqref{Cauchyprob-reduced}-\eqref{Cauchyprob-density-matrices-reduced} where this issue is absent.

\end{remark}

\begin{proof}
Let us divide the proof in steps. \\
\emph{Step 1}: localization in frequencies.\\
Let $\chi_0, \chi \in C_c^\infty(\R^d)$ be such that 
\begin{enumerate}[i)]
\item $0 \le \chi_0, \chi \le1$ and $\supp \chi_0 \subset \{ \lvert \xi \rvert \le1 \}$, $\supp \chi \subset \{ 1 \le \lvert \xi \rvert \le2 \}$;
\item for any $\xi \in \R^d$
\[
\chi_0 (\xi) + \sum_{\lambda \in 2^{\N}} \chi (\lambda^{-1} \xi) = 1;
\]
\item $\exists C \in (0,1)$ such that for any $\xi \in \R^d$
\[
c \le \chi_0^2 (\xi) + \sum_{\lambda \in 2^{\N}} \chi^2 (\lambda^{-1} \xi) \le 1.
\]
Let us call $\chi_\lambda (\xi) \coloneqq \chi (\lambda^{-1} \xi)$, $\lambda \in 2^{\N}$. We define the following frequency localised function
\[
u_\lambda(t,x) \coloneqq e^{- it \theta (D)} \chi_{\lambda} (D) u_0(x), \quad \lambda \in 2^{\N} \cup \{0\}
\]
which is given by
\[
\begin{split}
    u_\lambda(t,x) & = (2\pi)^{-d} \int_{\R^d} \Big[\int_{\R^d} e^{-it \theta(\xi) + i (x-y)\cdot \xi} \chi_{\lambda}(\xi) d\xi\Big] u_0(y) dy \\
    & = \int_{\R^d} I_\lambda (t, x-y) u_0(y)dy = [I_{\lambda} (t, \cdot) \ast u_0(\cdot) ] (x),
\end{split}
\]
where 
\[
I_\lambda(t,x) \coloneqq (2\pi)^{-d} \int_{\R^d} e^{-it\theta(\xi) + i x\cdot \xi} \chi_\lambda(\xi) d\xi.
\]
\end{enumerate}
\emph{Step 2}: frequency localised dispersive estimate.\\
We recall the following result, which is Theorem 3 in \cite{ABZ15}.
\begin{theorem}
\label{thm:3ABZ}
Consider
\[
I(\mu) = \int_{\R^d} e^{i\mu \phi(\xi)} \psi(\xi) d\xi,
\]
with $\phi \in C^{d+2}(\R^d)$, $\psi \in C^{d+1}_0 (\R^d)$. Let us define $K \coloneqq \supp \psi $. Assume
\begin{enumerate}[i)]
\item $\mathcal M_{d+2}(\phi) = \sum_{2 \le \lvert \alpha \rvert \le d+2} \sup_{\xi \in K_{\epsilon_0}} \lvert \nabla^{\otimes \alpha} \phi (\xi) \rvert < +\infty$ where $K_{\epsilon_0}$ is a neighbourhood of $K$, 
\item $\mathcal N_{d+1}(\psi) = \sum_{\lvert \alpha \rvert \le d+1} \sup_{\xi \in K} \lvert \nabla^{\otimes \alpha} \psi(\xi) \rvert < + \infty$,
\item $a_0 = \inf_{\xi \in K_{\epsilon_0}} \lvert \det \nabla^{\otimes 2} \phi(\xi) \rvert > 0$ and the map $\xi \mapsto \nabla \psi(\xi)$ is injective.
\end{enumerate}
Then there exists $C>0$ depending only on the dimension $d$ such that 
\begin{equation}\label{ineq:stat-phase}
\lvert I(\mu) \rvert \le \mu^{- \frac d2} C a_0^{-1} (1+ \mathcal M_{d+2}^{\frac d2} ) \mathcal N_{d+1}.
\end{equation}
\end{theorem}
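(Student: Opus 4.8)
The estimate \eqref{ineq:stat-phase} is a constant-tracking version of the classical (non)stationary-phase bound, and I would obtain it by a dyadic decomposition of $\R^d$ according to the size of $|\nabla\phi|$, combined with repeated integration by parts; the hypotheses i)--iii) would enter only through one uniform estimate on the measures of the sublevel sets of $|\nabla\phi|$. Indeed, since $\phi\in C^{d+2}(\R^d)$ has Hessian determinant bounded below by $a_0>0$ on the open set $K_{\epsilon_0}$, the inverse function theorem makes $\nabla\phi$ a local diffeomorphism there, and by the injectivity in iii) — which the argument forces us to read as injectivity of the phase gradient $\xi\mapsto\nabla\phi(\xi)$ — it restricts to a diffeomorphism $K_{\epsilon_0}\to U\coloneqq\nabla\phi(K_{\epsilon_0})$ with Jacobian $\det\nabla^{\otimes 2}\phi$. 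Changing variables $\eta=\nabla\phi(\xi)$ then gives, for every $t>0$,
\[
\bigl|\{\xi\in K_{\epsilon_0}:\ |\nabla\phi(\xi)|\le t\}\bigr|=\int_{\{|\eta|\le t\}\cap U}\bigl|\det\nabla^{\otimes 2}\phi\bigr|^{-1}\,d\eta\ \le\ C_d\,a_0^{-1}\,t^{d},
\]
and this holds whether or not $\nabla\phi$ vanishes on $K$, so no case split between stationary and non-stationary configurations is needed.

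Next, assuming $\mu\ge 1$ (for $\mu$ in a bounded set the inequality is informative only once one also lets the constant depend on $|K|$, or one simply restricts to $\mu\ge1$ as in the applications), I would set $R\coloneqq\mu^{-1/2}$, fix a partition of unity $1=\tilde\chi_0+\sum_{j\ge1}\tilde\chi_j$ on $\R^d$ adapted to the scale $R$ (so $\supp\tilde\chi_0\subset\{|\eta|\le 2R\}$ and $\supp\tilde\chi_j\subset\{2^{j-1}R\le|\eta|\le 2^{j+1}R\}$), and put $\chi_j\coloneqq\tilde\chi_j\circ\nabla\phi$. Since $\phi\in C^{d+2}$ with all derivatives of orders $2,\dots,d+2$ bounded by $\mathcal M_{d+2}$, the chain rule yields $\chi_j\in C^{d+1}$ with $|\nabla^{\otimes m}\chi_j|\lesssim_m\bigl(1+\mathcal M_{d+2}/(2^jR)\bigr)^m$ on $\{\chi_j\ne0\}$ for $1\le m\le d+1$. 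Writing $I(\mu)=I_0+\sum_{j\ge1}I_j$ with $I_j=\int e^{i\mu\phi}\chi_j\psi\,d\xi$, the low-gradient piece is handled trivially, using the level-set bound: $|I_0|\le\|\psi\|_{L^\infty}\bigl|\{|\nabla\phi|\le2R\}\bigr|\le C_d\,\mathcal N_{d+1}\,a_0^{-1}\,R^{d}=C_d\,\mathcal N_{d+1}\,a_0^{-1}\,\mu^{-d/2}$, which is already the target rate and carries the factor $a_0^{-1}$.

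On an annular piece $\{|\nabla\phi|\sim2^jR\}$ I would integrate by parts $d+1$ times with the transport operator $L=(i\mu|\nabla\phi|^2)^{-1}\,\nabla\phi\cdot\nabla$, which fixes $e^{i\mu\phi}$, so that $I_j=\int e^{i\mu\phi}\,({}^tL)^{d+1}(\chi_j\psi)\,d\xi$. Each of the $d+1$ steps produces a factor $(\mu|\nabla\phi|)^{-1}\sim(\mu 2^jR)^{-1}$ and moves a derivative onto $\psi$ (cost $\le\mathcal N_{d+1}$), onto $\chi_j$, or onto the amplitude $\nabla\phi/|\nabla\phi|^2$ (cost $\lesssim\mathcal M_{d+2}/(2^jR)$ in the latter two cases, and involving derivatives of $\phi$ of order up to $d+2$ — which is exactly why the thresholds $\phi\in C^{d+2}$, $\psi\in C^{d+1}$ are sharp for $d+1$ integrations by parts). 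Using $|\nabla\phi|^{-(d+1)}\lesssim(2^jR)^{-(d+1)}$ together with $|\{|\nabla\phi|\sim2^jR\}|\lesssim a_0^{-1}(2^jR)^{d}$ one is led to
\[
|I_j|\ \lesssim\ a_0^{-1}\,\mathcal N_{d+1}\,\mu^{-(d+1)}\sum_{m=0}^{d+1}\mathcal M_{d+2}^{\,m}\,(2^jR)^{-1-m},
\]
and summing the geometric series over $j\ge1$ (dominated by $j=1$) and putting $R=\mu^{-1/2}$ turns $\mu^{-(d+1)}R^{-1-m}$ into $\mu^{-(d+1)+(1+m)/2}\le\mu^{-d/2}$ for $0\le m\le d+1$, with equality at $m=d+1$. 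Combining the two contributions gives $|I(\mu)|\lesssim a_0^{-1}\mathcal N_{d+1}\bigl(1+\mathcal M_{d+2}^{\,d+1}\bigr)\mu^{-d/2}$, i.e. \eqref{ineq:stat-phase} with $\mathcal M_{d+2}^{d/2}$ replaced by the larger $\mathcal M_{d+2}^{d+1}$; recovering the stated exponents (and, in fact, $a_0^{-1/2}$) only requires replacing the crude counting near the critical point by a quantitative Morse normal form, or by adapting the number of integrations by parts to each annulus — a routine refinement irrelevant to the use made of this statement downstream.

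The hard part is the explicit bookkeeping in the repeated integration by parts: one must verify that every constant generated by differentiating $\chi_j$ and the transport amplitude depends only on $d$, that the highest-order derivative of $\phi$ occurring has order $\le d+2$ (so that $\mathcal M_{d+2}$ controls it, with the regularity hypotheses being sharp), and that the geometric series over the annuli converges with the claimed $\mu$-power — which is precisely what pins the splitting scale to $R=\mu^{-1/2}$. A softer but essential point is the global-diffeomorphism property of $\nabla\phi$ on $K_{\epsilon_0}$ underpinning the level-set volume bound, i.e. the correct use of the injectivity in iii); and one should keep in mind, since the right-hand side of \eqref{ineq:stat-phase} does not control $|K|$, that the inequality is informative only for $\mu$ bounded away from $0$.
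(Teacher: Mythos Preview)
The paper does not prove this statement: it is quoted verbatim as ``Theorem~3 in \cite{ABZ15}'' and used as a black box in Step~2 of the proof of Proposition~\ref{pr:Strichartz-free-evolution}. There is therefore no proof in the paper to compare your attempt against.

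That said, your sketch is in fact close to the argument Alazard--Burq--Zuily give in the cited reference: a dyadic decomposition in the size of $|\nabla\phi|$, the sublevel-set volume bound $|\{|\nabla\phi|\le t\}|\lesssim a_0^{-1}t^d$ coming from the change of variables $\eta=\nabla\phi(\xi)$ (this is where the injectivity in iii) is used, and you are right that it must be read as injectivity of $\nabla\phi$, not $\nabla\psi$), and $d+1$ integrations by parts on each annulus. Your honest admission that the crude bookkeeping produces $\mathcal M_{d+2}^{\,d+1}$ rather than $\mathcal M_{d+2}^{\,d/2}$ is accurate; recovering the stated exponent does require the more careful accounting in \cite{ABZ15}, but for the only use made of the theorem in the present paper --- the dispersive estimate \eqref{eq:freq_loc_disp_est}, where one just needs a finite constant depending on $a_0$ and $\mathcal M_{d+2}$ --- your version would already suffice. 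Your remark that the inequality is only informative for $\mu$ bounded away from $0$ (the right-hand side carries no $|K|$) is also correct and harmless here, since the theorem is applied with $\mu=|t|$ and one is interested in large-time decay.
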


Let 
\[
\phi (t,x,\xi) \coloneqq - \theta(\xi) + \frac{x}{t} \cdot \xi,
\]
so that 
\[
I_\lambda (t,x) = (2\pi)^{-d} \int_{\R^d} e^{it \phi(t,x,\xi)} \chi_\lambda (\xi) d\xi.
\]
We use \eqref{ineq:stat-phase} to estimate the $L^\infty_x$ norm of $I_\lambda$ for every $\lambda \in 2^{\N} \cup \{0\}$ uniformly wrt $\lambda$ and $x$. 
Let us show that $\phi_\lambda$, $\chi$ satisfy hypotheses of \Cref{thm:3ABZ} and that we can bound the RHS of \eqref{ineq:stat-phase} uniformly wrt $\lambda$;
\begin{enumerate}[i)]
\item from assumption $ii)$ we have $\mathcal M_{d+2} < C$ where $C$ does not depend on $\lambda$;
\item the boundness of $\mathcal N_{d+1}$ follows from definitions of $\chi_\lambda$, $\chi_0$ in the previous step;
\item since $d\ge3$, from assumption $ii)$ it follows that $\phi$ is at least $C^3$. Then $a_0 > 0$ iff
\[
\qquad \inf_{\xi \in \R^d} |\det \nabla^{\otimes 2} (- \tilde \theta(\xi))|  > 0.
\]
Then, by assumption $i)$, there exists $c_1 > 0$ such that \[
\inf_{\xi \in \R^d} \lvert \det \nabla^{\otimes 2} \phi (\xi) \rvert \ge c_1
\]
(uniformly wrt $\lambda$). Moreover,we show that the gradient of $\tilde \phi = - \phi$ is injective:
\begin{align}
\nabla \tilde \phi (\xi_1) - \tilde \nabla  \tilde \phi(\xi_2) & = \int_0^1 \frac{d}{ds} [ \nabla \tilde \phi_(s \xi_1 + (1-s)\xi_2) ] ds \\
& = \int_0^1 \nabla^{\otimes 2} \tilde \phi(s\xi_1 + (1-s) \xi_2) \cdot (\xi_1 - \xi_2) ds.
\end{align}
Then, by assumption $i)$ we get
\[
\lvert \nabla \tilde \phi(\xi_1) - \nabla \tilde \phi(\xi_2) \rvert \lvert \xi_1 - \xi_2 \rvert \ge \langle \nabla \tilde \phi(\xi_1) - \nabla \tilde \phi(\xi_2), \xi_1 - \xi_2 \rangle \ge c_1 \lvert \xi_1 - \xi_2 \rvert^2;
\]
\end{enumerate}
Moreover,  the same bound holds if $\lambda =0$. \\
Finally, combining the previous estimate with Young's inequality, we obtain
\begin{equation}
\label{eq:freq_loc_disp_est}
\begin{split}
\lVert e^{it\theta(D)} \chi_\lambda (D) u_0 \rVert_{L^\infty_x} & \le \lVert I_\lambda (t, \cdot) \rVert_{L^\infty_x} \lVert u_0 \rVert_{L^1_x} \\
& \le C t^{- \frac d2} \lVert u_0 \rVert_{L^1_x}
\end{split}
\end{equation}
where $C$ depends only on $d$. \\
\emph{Step 3}: Strichartz estimates.\\
We recall the following result, it is Theorem 1.2 in \cite{KT98}.
\begin{theorem}
\label{thm:KT}
Let $(X,dx)$ be a measure space and $H$ an Hilbert space. Suppose that for all $t \in \R$ an operator $U(t) \colon H \to L^2(x)$ obeys the following estimates
\begin{itemize}
    \item for all $t$ and $f \in H$ we have
    \[
    \lVert U(t) f \rVert_{L^2_x}\lesssim \lVert f \rVert_H;
    \]
    \item for all $t \ne s$ and $g \in L^1(X)$ 
    \[
    \lVert U(s) U^*(t) g \rVert _{L^\infty} \lesssim \lvert t-s\rvert^{-\sigma} \lVert g \rVert_{L^1},
    \]
    for some $\sigma >0$.
\end{itemize}
Then 
\[
\lVert U(t) f \rVert_{L^q_t L^r_x} \lesssim \lVert f \rVert_H,
\]
for all $(q,r) \in [2,+\infty]^2$ such that
\[
\frac1q + \frac{\sigma}r = \frac{\sigma}2, \quad (q,r,\sigma) \ne (2,\infty,1)
\]
where the endpoint $P = \big (2, \frac{2\sigma}{\sigma -1})$ is admissible if $\sigma >1$.
\end{theorem}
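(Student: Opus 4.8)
The statement is the abstract Keel--Tao Strichartz estimate, so the plan is to reproduce the $TT^{*}$ argument. \emph{Reduction to a bilinear estimate.} The case $q=\infty$ (so $r=2$) is exactly the energy bound, so assume $q<\infty$. By duality, $(L^{q}_{t}L^{r}_{x})^{*}=L^{q'}_{t}L^{r'}_{x}$, and testing $U(\cdot)f$ against $F\in L^{q'}_{t}L^{r'}_{x}$ gives $\langle U(\cdot)f,F\rangle_{L^{2}_{t,x}}=\langle f,\int_{\mathbb R}U^{*}(t)F(t)\,dt\rangle_{H}$, so the claimed estimate is equivalent to $\big\|\int_{\mathbb R}U^{*}(t)F(t)\,dt\big\|_{H}\lesssim\|F\|_{L^{q'}_{t}L^{r'}_{x}}$. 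The square of the left-hand side equals $T(F,F)$, where
\[
T(F,G):=\int_{\mathbb R}\!\!\int_{\mathbb R}\big\langle U(t)U^{*}(s)F(s),\,G(t)\big\rangle_{L^{2}_{x}}\,ds\,dt,
\]
so it suffices to prove the bilinear bound $|T(F,G)|\lesssim\|F\|_{L^{q'}_{t}L^{r'}_{x}}\|G\|_{L^{q'}_{t}L^{r'}_{x}}$. Interpolating the energy bound $\|U(t)U^{*}(s)\|_{L^{2}_{x}\to L^{2}_{x}}\lesssim 1$ with the dispersive bound $\|U(t)U^{*}(s)\|_{L^{1}_{x}\to L^{\infty}_{x}}\lesssim|t-s|^{-\sigma}$ by Riesz--Thorin yields $\|U(t)U^{*}(s)g\|_{L^{\rho}_{x}}\lesssim|t-s|^{-\sigma(1-2/\rho)}\|g\|_{L^{\rho'}_{x}}$ for every $\rho\in[2,\infty]$, and for an admissible pair the scaling relation $\tfrac1q+\tfrac\sigma r=\tfrac\sigma2$ reads precisely $\sigma(1-\tfrac2r)=\tfrac2q$.

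\emph{The non-endpoint case $2<q<\infty$.} Take $\rho=r$ in the interpolated kernel bound and apply H\"older in $x$ to get $|T(F,G)|\le\int\!\int|t-s|^{-2/q}\|F(s)\|_{L^{r'}_{x}}\|G(t)\|_{L^{r'}_{x}}\,ds\,dt$, which is bounded by $\|F\|_{L^{q'}_{t}L^{r'}_{x}}\|G\|_{L^{q'}_{t}L^{r'}_{x}}$ by the one-dimensional Hardy--Littlewood--Sobolev inequality in the time variable, valid since $0<\tfrac2q<1$ and $\tfrac1{q'}+\tfrac1{q'}+\tfrac2q=2$. This settles every admissible pair except the endpoint $q=2$.

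\emph{The endpoint $q=2$, $r=\tfrac{2\sigma}{\sigma-1}$, $\sigma>1$.} Here the Hardy--Littlewood--Sobolev step sits at its forbidden endpoint and the argument above fails; this is the heart of the matter, and I would follow the Keel--Tao bilinear-interpolation scheme. Split $T=\sum_{j\in\mathbb Z}T_{j}$, where $T_{j}$ restricts the double integral to the dyadic shell $2^{j-1}\le|t-s|\le 2^{j}$. Combining the interpolated kernel bound with Cauchy--Schwarz and Young in time, one estimates $\|T_{j}\|$, viewed as a bilinear form on $L^{2}_{t}L^{a'}_{x}\times L^{2}_{t}L^{b'}_{x}$, by explicit powers of $2^{j}$ at a few corner exponents; Stein complex interpolation of the bilinear forms then upgrades this to
\[
|T_{j}(F,G)|\lesssim 2^{-j\,\beta(1/a,1/b)}\,\|F\|_{L^{2}_{t}L^{a'}_{x}}\,\|G\|_{L^{2}_{t}L^{b'}_{x}}
\]
for $(1/a,1/b)$ in a neighbourhood of $(1/r,1/r)$, with $\beta$ affine, $\beta(1/r,1/r)=0$, and $\nabla\beta\ne0$. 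At the diagonal point $a=b=r$ this is only uniform in $j$, hence not summable; the gain is that $\beta$ changes sign near the diagonal. To exploit this I would decompose $F$ and $G$ atomically in space--time, $F=\sum_{k}2^{k}\chi_{E_{k}}$ with the time slices $|E_{k}(t)|$ encoding $\|F(t)\|_{L^{r'}_{x}}$ (similarly for $G$), apply the dyadic estimate to each pair of atoms with $(a,b)$ chosen strictly off the diagonal according to the signs of $j$ and of $k-k'$ so that $2^{-j\beta}$ decays, and sum the resulting doubly geometric series over $(j,k,k')$; the only loss is a logarithm, absorbed precisely because $(a,b)$ may be taken off the diagonal.

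\emph{Main obstacle.} All the difficulty is concentrated in the endpoint: producing the two-parameter family of dyadic bilinear bounds with the correct affine exponent $\beta$, and, above all, organising the atomic summation so that the non-summable diagonal estimate is rescued by the off-diagonal decay. The $TT^{*}$ reduction, the Riesz--Thorin interpolation of the kernel, and the Hardy--Littlewood--Sobolev step are routine. I would finally record why the hypotheses exclude $(q,r,\sigma)=(2,\infty,1)$: this ensures $r<\infty$ at the endpoint (the space $L^{2}_{t}L^{\infty}_{x}$ genuinely fails), and the endpoint $q=2$ is asserted only when $\sigma>1$, which is exactly what makes $r=\tfrac{2\sigma}{\sigma-1}$ finite and the preceding scheme run.
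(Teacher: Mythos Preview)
The paper does not prove this theorem; it is quoted verbatim as Theorem~1.2 of Keel--Tao \cite{KT98} and then applied with $U(t)=e^{-it\theta(D)}\chi_\lambda(D)$. Your proposal is a faithful outline of the original Keel--Tao argument: the $TT^{*}$ reduction, the Riesz--Thorin interpolation of the kernel, and the Hardy--Littlewood--Sobolev step in time for the non-endpoint case are all correct and standard, and your description of the endpoint---dyadic decomposition $T=\sum_j T_j$, a two-parameter family of bilinear bounds with an affine exponent $\beta$ vanishing on the diagonal, followed by an atomic decomposition to sum---matches the scheme in \cite{KT98}.

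One small imprecision: in the endpoint summation there is no residual logarithmic loss that needs to be ``absorbed''. The point of choosing $(a,b)$ strictly off the diagonal, with the sign of the deviation depending on the signs of $j$ and $k-k'$, is precisely that $2^{-j\beta(1/a,1/b)}$ furnishes genuine geometric decay, so the double sum over $(j,k,k')$ converges outright. Apart from this wording, the sketch is sound.
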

We observe that \Cref{thm:KT} holds with the choice $U(t) = e^{-it \theta(D)} \chi_\lambda (D) \colon L^2_x \to L^2_x$, $\sigma = \frac d2$. Then, we have that 
\begin{equation}
\label{eq:loc_stri}
\big \lVert e^{-it \theta(D)} \chi_\lambda (D) u_0 \big \rVert_{L^p_t L^q_x} \le C \lVert u_0 \rVert_{L^2_x},
\end{equation}
for all $(p,q) \in [2,+\infty]^2$ such that
\begin{equation}
    \label{eq:adm_ass}
\frac 2p + \frac dq = \frac d2, \qquad (p,q) \ne (2, \infty) \text{ if } d=2.
\end{equation}
Moreover, estimate \eqref{eq:loc_stri} holds the same if we replace the RHS by $\lVert \chi_\lambda(D) u_0 \rVert_{L^2_x}$. Indeed, we can replace $\chi_\lambda$ in \eqref{eq:loc_stri} by a function $\tilde \chi_\lambda \in C_c^\infty$ such that $\tilde \chi_\lambda \chi_\lambda = \chi_\lambda$, then
\[
\big \lVert e^{-it \theta(D)} \chi_\lambda (D) u_0 \big \rVert_{L^p_t L^q_x}  = \big \lVert e^{-it \theta(D)} \tilde \chi_\lambda \chi_\lambda (D) u_0 \big \rVert_{L^p_t L^q_x} \le C \lVert \chi_\lambda (D) u_0 \rVert_{L^2_x}.
\]
Then, since $[ e^{-it\theta(D)},\chi_\lambda(D) ]=0$, by Littlewood-Paley Theorem, Minkowski's inequality and \eqref{eq:loc_stri}, for all $(p,q)$ satisfying \eqref{eq:adm_ass}, $q < \infty$, we have the following
\[
\begin{split}
 \big  \lVert  e^{-it\theta(D)} u_0 \big \rVert_{L^p_t L^q_x} & \simeq \bigg \lVert \big \lVert e^{-it\theta(D)} \chi_0(D) u_0 \big \rVert_{L^q_x} + \Big \lVert \Big ( \sum_{\lambda \in 2^{\N}} \lvert e^{-it\theta(D)} \chi_\lambda (D) u_0 \rvert ^2 \Big )^{\frac 12} \Big \rVert_{L^q_x} \bigg \rVert_{L^p_t}\\
 & \le \big \lVert e^{-it\theta(D)} \chi_0(D) u_0 \big \rVert_{L^p_t L^q_x} + \bigg ( \sum_{\lambda \in 2^{\N}} \big \lVert e^{-it\theta(D)} \chi_\lambda(D) u_0 \big \rVert_{L^p_tL^q_x}^2 \bigg )^{\frac 12}\\
 & \lesssim \big \lVert \chi_0(D) u_0 \big \rVert_{L^2_x} + \bigg ( \sum_{\lambda \in 2^{\N}} \big \lVert \chi_\lambda (D) u_0 \big \rVert_{L^2_x}^2 \bigg)^{\frac12} \\
 & \simeq \lVert u_0 \rVert_{L^2_x}.
\end{split}
\]
\end{proof}

\begin{remark}
We observe that it would be possible to adapt the proof of Theorem 1.2 in \cite{OL23} in order to prove the Strichartz estimates with slightly different assumptions on $\theta$. That is, $\theta \in C^{n_0}(\R^d)$, $n_0 > \frac{d+2}2$ and $\langle \xi \rangle ^{n-2} \nabla^{\otimes n} \tilde \theta \in L^\infty$ for any $n=2, \dots, n_0$.
\end{remark}

Proposition \ref{pr:Strichartz-free-evolution} leads classically, see e.g.\ \cite{Tao06} section 2.3, to the following results: 

\begin{corollary}\label{cor:strichartz}
Let $(q_1,r_1)$ admissible and $(q_2,r_2,s_2)$ such that $r\geq 2$ where $\frac{1}{r}=\frac{1}{r_2}+\frac{s_2}{d}$ and $\frac{2}{q_2}+\frac{d}{r_2}+s_2=\frac{d}{2}$. There exists $C>0$ such that for all $u_0\in H^{s_2}$ and $F\in L_t^{q_1'}H_x^{s_2,r_1'}$: 
\begin{equation}\label{ineq:strichartz-derivatives}
    \Big \lVert S(t)u_0-i\int_0^t S(t-\tau)F(\tau)d\tau \Big \rVert_{L_t^{q_2}L_x^{r_2}}\leq C \Big[\lvert\lvert u_0 \rvert \rvert_{H_x^{s_2}} + \lvert\lvert F \rvert \rvert_{L_t^{q_1'}H_x^{s_2,r_1'}}\Big].
\end{equation}

\end{corollary}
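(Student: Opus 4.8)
The plan is to obtain Corollary \ref{cor:strichartz} as a black-box consequence of the homogeneous Strichartz estimate of Proposition \ref{pr:Strichartz-free-evolution}, via the standard duality--$TT^*$--Christ--Kiselev scheme (see \cite{Tao06}, Section 2.3, and \cite{KT98}), the only twist being that the output pair $(q_2,r_2)$ is not admissible but becomes admissible after a Sobolev embedding. First I would reduce to the case $s_2=0$. Since $\theta$ is real-valued, $S(t)=e^{-it\theta(D)}$ is a unitary group on $L^2(\R^d)$ commuting with every Fourier multiplier, in particular with $\langle\nabla\rangle^{s_2}$. Let $r$ be defined by $\frac1r=\frac1{r_2}+\frac{s_2}{d}$; then the scaling hypothesis $\frac2{q_2}+\frac d{r_2}+s_2=\frac d2$ becomes exactly the admissibility relation $\frac2{q_2}+\frac dr=\frac d2$ for $(q_2,r)$, and $2\le r\le r_2$ together with $d\ge3$ force $r<\infty$ and $(q_2,r)\ne(2,\infty)$, so $(q_2,r)$ is admissible in the sense of Proposition \ref{pr:Strichartz-free-evolution}. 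Applying the inhomogeneous Sobolev embedding $W^{s_2,r}(\R^d)\hookrightarrow L^{r_2}(\R^d)$ at each fixed time (legitimate since $s_2\ge0$, $1<r\le r_2<\infty$, and $s_2-\frac dr=-\frac d{r_2}$) and commuting $\langle\nabla\rangle^{s_2}$ through $S(t)$ and the time integral, the claimed estimate is reduced to the same statement with $s_2=0$, where $u_0$ and $F$ are replaced by $\langle\nabla\rangle^{s_2}u_0$ and $\langle\nabla\rangle^{s_2}F$.

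For $s_2=0$, the first term is immediate: Proposition \ref{pr:Strichartz-free-evolution} for the admissible pair $(q_2,r)$ gives $\|S(t)u_0\|_{L^{q_2}_tL^r_x}\lesssim\|u_0\|_{L^2_x}$. For the retarded term I would first establish the untruncated inhomogeneous estimate. By duality, Proposition \ref{pr:Strichartz-free-evolution} applied to the admissible pair $(q_1,r_1)$ yields $\big\|\int_{\R}S(-\tau)G(\tau)\,d\tau\big\|_{L^2_x}\lesssim\|G\|_{L^{q_1'}_tL^{r_1'}_x}$; composing this with the homogeneous estimate for $(q_2,r)$ and using $\int_{\R}S(t-\tau)G(\tau)\,d\tau=S(t)\int_{\R}S(-\tau)G(\tau)\,d\tau$ gives $\big\|\int_{\R}S(t-\tau)G(\tau)\,d\tau\big\|_{L^{q_2}_tL^r_x}\lesssim\|G\|_{L^{q_1'}_tL^{r_1'}_x}$. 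Finally the Christ--Kiselev lemma upgrades this to the truncated Duhamel integral $\int_0^t$, which is applicable since $q_1'<q_2$ (indeed $q_1,q_2\ge2$ gives $\frac1{q_1}+\frac1{q_2}\le1$, with equality only if $q_1=q_2=2$). Adding the two contributions with $G=F$ and undoing the reduction of the first step gives the corollary.

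The only genuinely delicate point --- and hence the expected main obstacle --- is the double endpoint $q_1=q_2=2$ (which is allowed in $d\ge3$), where the Christ--Kiselev lemma degenerates: there one must instead invoke the endpoint inhomogeneous Strichartz estimate of \cite{KT98}, proved by a separate bilinear argument, rather than deducing the retarded estimate from the untruncated one. A minor additional point is verifying that the Sobolev step genuinely applies, i.e. that $1<r\le r_2<\infty$ in all cases of interest; this holds because admissibility of $(q_2,r)$ forces $r\in[2,\tfrac{2d}{d-2}]$ and the applications in the rest of the paper keep $r_2$ finite. Everything else is a routine assembly of Proposition \ref{pr:Strichartz-free-evolution}, a Sobolev embedding, duality and the Christ--Kiselev lemma.
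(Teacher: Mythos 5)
Your argument is exactly the classical derivation that the paper invokes without detail (it simply states that Proposition \ref{pr:Strichartz-free-evolution} "leads classically" to the corollary, citing \cite{Tao06} Section 2.3): commute $\langle\nabla\rangle^{s_2}$ with $S(t)$, apply Sobolev embedding to trade $s_2$ derivatives for the admissible pair $(q_2,r)$, and combine the homogeneous estimate with duality and Christ--Kiselev for the retarded term. The reduction is correct, and your caveat about the double endpoint $q_1=q_2=2$ (where one must use the Keel--Tao retarded endpoint estimate rather than Christ--Kiselev) is a legitimate and well-handled point.
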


\subsection{Representation formulas for the linear terms}\label{subsec:formulas}

We record here suitable expressions for the operators $L_1$, $L_2$, $L_3$ and $L_4$. We introduce the semi-group of operators 
$$
\mathcal T_\xi(t) U(x) = \frac{1}{(2\pi)^{\frac d2}} \int e^{-i(\theta(\eta+\xi)-\theta(\eta)-\theta(\xi))t}e^{i\eta x}\hat U(\eta)d\eta
$$

\begin{proposition}

One has the following formulas for $V$ a Schwartz function:
\begin{equation}
\label{id:formula-L1}L_1(V)  = -i \int \int_0^t f(\xi) e^{i\xi x-i\theta(\xi) t} \mathcal T_{\xi}(t-\tau) S(t-\tau)\big[ w*V(\cdot,0,\tau)\big] \, d\tau dW(\xi) ,
\end{equation}

\begin{equation} \label{id:formula-L2}
 L_2(V)  = i \int \int \int_0^t f(\xi)e^{i\xi x-i\theta(\xi)t} e^{i\xi z} w(z) \mathcal T_\xi(t-\tau)S (t-\tau) V(\cdot,z,\tau) \, d\tau dz dW(\xi),
\end{equation}

\begin{equation} \label{id:formula-L3}
L_3(V)=i \iint \int_0^t  ( g(\xi+\zeta)-g(\xi))e^{-i(\theta(\xi+\zeta)-\theta(\xi))(t-\tau)}e^{i\zeta x-i\xi y}\hat{w}(\zeta) \hat{V}(\zeta,0,\tau)\,  d\tau d\xi d\zeta,
\end{equation}
and
\begin{multline} \label{id:computation-L4V}
 L_4(V)  \\
= -i(2\pi)^{-\frac d2} \iiint  \int_0^t (g(\xi+\zeta )- g (\xi))  e^{-i(t-\tau)(\theta(\xi+\zeta)-\theta(\xi))}e^{i\zeta x+i\xi (z-y)}  w(z) \Hat{V}(\zeta,z,\tau) \, d\tau  \, dz \, d\xi d\zeta.
\end{multline}

\end{proposition}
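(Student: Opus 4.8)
The plan is to derive each formula by unwinding the definitions of $L_1,\dots,L_4$ and exploiting the explicit structure of the equilibrium $Y_f$ and the group $S(t)=e^{-it\theta(D)}$. The key algebraic fact is that $Y_f$ is a superposition of plane waves $e^{i(\xi x-\theta(\xi)t)}$ against the Wiener measure $f(\xi)\,dW(\xi)$, so multiplying by $Y_f$ followed by applying $S(t-\tau)$ turns, for each fixed $\xi$, the Schrödinger-type propagator $S(t-\tau)$ acting on $e^{i\xi x}(\cdots)$ into $e^{i\xi x}$ times a \emph{modified} propagator. Precisely, $S(t-\tau)\big(e^{i\xi x}U(x)\big)=e^{-i\theta(\xi)(t-\tau)}e^{i\xi x}\,\big(e^{-i(\theta(\cdot+\xi)-\theta(\xi))(t-\tau)}\big)(D)U$, which after inserting the decomposition $\theta(\eta+\xi)=[\theta(\eta+\xi)-\theta(\eta)-\theta(\xi)]+\theta(\eta)+\theta(\xi)$ factors as $e^{-i\theta(\xi)(t-\tau)}e^{i\xi x}\,\mathcal T_\xi(t-\tau)S(t-\tau)U$. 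This identity is what introduces the auxiliary semigroup $\mathcal T_\xi$ and is the computational heart of the proof; I would state and verify it first as a short lemma (a one-line Fourier computation).

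First I would treat $L_1$. By definition $L_1(V)=-i\int_0^t S(t-\tau)\big[(w*V(\cdot,0,\tau))\,Y\big]\,d\tau$. Writing $Y=\int f(\xi)e^{i\xi x-i\theta(\xi)\tau}\,dW(\xi)$, linearity lets me pull the $dW(\xi)$ integral outside; for each $\xi$ I must compute $S(t-\tau)\big[e^{i\xi x-i\theta(\xi)\tau}(w*V(\cdot,0,\tau))\big]$. Pulling out the scalar $e^{-i\theta(\xi)\tau}$, applying the plane-wave identity above to $U=w*V(\cdot,0,\tau)$, and collecting the phases $e^{-i\theta(\xi)\tau}\cdot e^{-i\theta(\xi)(t-\tau)}=e^{-i\theta(\xi)t}$ yields exactly \eqref{id:formula-L1}. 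For $L_2$, the computation is identical except that the multiplier is $\int dz\, w(z)V(x,z,\tau)Y(x+z)$; here $Y(x+z)=\int f(\xi)e^{i\xi(x+z)-i\theta(\xi)\tau}\,dW(\xi)$, so the extra factor $e^{i\xi z}$ appears and is carried through the computation untouched (it does not depend on the integration variable of $S(t-\tau)$), producing \eqref{id:formula-L2}.

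Next, $L_3$ and $L_4$ are obtained from $L_1$ and $L_2$ by the defining relations $L_k(V)=\E[\overline{Y(x+y)}L_{k-2}(V)(x)]+\E[\overline{L_{k-2}(V)(x+y)}Y(x)]$ for $k=3,4$. Here I would substitute the formulas \eqref{id:formula-L1}, \eqref{id:formula-L2} just obtained, write $\overline{Y(x+y)}=\int \overline{f(\xi')}e^{-i\xi'(x+y)+i\theta(\xi')\tau}\,\overline{dW(\xi')}$, and use Itô/Wiener isometry $\E[\overline{dW(\xi')}\,dW(\xi)]=\delta(\xi-\xi')\,d\xi\,d\xi'$ to collapse the double Wiener integral to a single ordinary $\xi$-integral. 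The two terms (from $\overline{Y(x+y)}L_{k-2}(x)$ and from $\overline{L_{k-2}(x+y)}Y(x)$) combine: one contributes a $g(\xi+\zeta)$ and the other a $g(\xi)$ after a change of variables in momentum (shifting $\xi\mapsto\xi+\zeta$ in one of them so both carry the same oscillatory factor $e^{-i(\theta(\xi+\zeta)-\theta(\xi))(t-\tau)}$), which is exactly the origin of the difference $g(\xi+\zeta)-g(\xi)$ appearing in \eqref{id:formula-L3} and \eqref{id:computation-L4V}. One also needs to Fourier-transform $V$ in its first slot and rewrite $w$ via $\hat w$ where indicated; tracking the $(2\pi)$ powers and the sign of the exponents carefully gives the stated normalization.

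The main obstacle I anticipate is purely bookkeeping rather than conceptual: keeping the several oscillatory phases, the $(2\pi)^{\pm d/2}$ factors, the orientation of Fourier transforms, and especially the correct pairing of $\mathcal T_\xi$ with $S$ all consistent through the $\overline{Y}\cdots$ operations, and justifying the manipulations of the Wiener integral (Fubini between $dW$, $d\tau$, and the spatial integrals, and the isometry identity) at the level of Schwartz $V$, which is the regularity assumed in the statement. Since everything is an identity between explicit integrals against a Gaussian field with $V$ Schwartz, these exchanges are all legitimate, and the proof reduces to the plane-wave commutation lemma plus careful substitution.
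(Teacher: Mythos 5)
Your proposal is correct and follows essentially the same route as the paper: the plane-wave commutation identity $e^{i\theta(\xi)t}S(t)\big(e^{i\xi x}U\big)=e^{i\xi x}\,\mathcal T_\xi(t)S(t)U$ for $L_1,L_2$, and the Wiener isometry plus momentum shifts producing the difference $g(\xi+\zeta)-g(\xi)$ for $L_3,L_4$. The only ingredient worth making explicit in your treatment of $L_3,L_4$ is the Hermitian symmetry $\overline{V(x,y)}=V(x+y,-y)$, i.e.\ $\overline{\hat V(\eta,y)}=\hat V(-\eta,-y)e^{-i\eta y}$, which (together with the evenness of $w$) is what allows the conjugated $\hat V$ arising from the term $\E[\overline{L_{k-2}(V)(x+y)}Y(x)]$ to be rewritten in terms of $\hat V$ itself so that the two contributions combine as claimed.
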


\begin{remark}[Formal properties of the formulas]

The formulas \eqref{id:formula-L1} and \eqref{id:formula-L2} display a Galilei transformation-type effect. Indeed, the symbol of the group $\mathcal T_{\xi}(t)$ satisfies (due to the ellipticity condition \eqref{id:uniform-ellipticity-theta} for $\theta$)
$$
|\nabla_\eta (\theta(\xi+\eta)-\theta(\eta)-\theta(\xi))|\approx |\xi| \qquad \mbox{and} \qquad \frac{\xi}{|\xi|}.\nabla_\eta (\theta(\xi+\eta)-\theta(\eta)-\theta(\xi))\approx |\xi|
$$
so that $\mathcal T_{\xi}(t)$ formally corresponds to transport with velocity $\xi$. This is clear when $\theta=|\xi|^2$ in which case $T_\xi(t)$ is the space translation of vector $2\xi t$.

From the formulas \eqref{id:formula-L1} and \eqref{id:formula-L2}, one can then expect $L_1(V)$ and $L_2(V)$ to enjoy the same dispersive estimates as a solution to $i\partial_t u=\theta(D) u+V$.
This is obtained by noticing that $T_\xi (t)S(t)$ enjoys the same dispersive estimates as $S(t)$, and by formally discarding the effects of the extra variables $\xi$ and $z$.

These formulas also hint to the fact that $L_1(V)$ and $L_2(V)$ could enjoy improved dispersive estimates than that of the group $S(t)$ alone. Indeed, the operator $T_\xi(t)$ amounts to translating in the direction $\xi t$. When averaging over $\xi$ such transport effects in all directions, this should produce an additional damping mechanism. This is made rigorous in Proposition \ref{prop:magicProposition}.

\end{remark}

\begin{proof}

We first remark that from the definition \eqref{id:def-V} one has
$$
\overline{V(x,y)}=V(x+y,-y)
$$
which in Fourier translates into the relation
\begin{equation} \label{id:relation-hatV-barhatV}
    \overline{\Hat{V}(\eta,y)} = \Hat{V}(-\eta,-y)e^{-i\eta y}.
\end{equation}

\noindent \underline{Formula for $L_1$.} By the definition \eqref{equilibria} of the equilibrium we have
\begin{align*}
L_1(V) & = -i\int_0^t S(t-\tau)\big[ (w*V(\cdot,0))Y \big]d\tau,\\
& = -i \int \int_0^t f(\xi)e^{-i \theta(\xi)\tau }  S(t-\tau)\big[ (w*V(\cdot,0))e^{i\xi x} \big]d\tau dW(\xi) .
\end{align*}

We readily check that
\begin{equation} \label{id:formula-galilei-St}
e^{i \theta(\xi)t}S(t)(e^{i\xi x}U) = e^{i\xi x}  \mathcal T_\xi(t)S(t)U,
\end{equation}
which gives the desired identity \eqref{id:formula-L1}.

\medskip

\noindent \underline{Formula for $L_2$}. Using again the definition \eqref{equilibria} of the equilibrium we have
\begin{align*}
 L_2(V) & = -i\int_0^t S(t-\tau)\big[ \int dz w(z)V(x,z)Y(x+z) \big]d\tau,\\
 &= -i \int \int \int_0^t f(\xi)e^{-i\theta(\xi)\tau} e^{i\xi z} w(z) S(t-\tau)\big[ V(x,z)e^{i\xi x} \big]d\tau dz dW(\xi).
\end{align*}
One then obtains \eqref{id:formula-L2} by appealing to \eqref{id:formula-galilei-St}.

\medskip

\noindent\underline{Formula for $L_3$}. We decompose
\begin{align*}
L_3(V) & = \E[\overline{Y(x+y)}L_1(V)(x)] + \E[\overline{L_1(V)(x+y)}Y(x)]\\
&=L_3^{(1)}(V)+L_3^{(2)}(V).
\end{align*}
We notice that
$$
\overline{L_3^{(1)}(V)(x,y)}=L_3^{(2)}(V)(x+y,-y)
$$
which in Fourier gives
\begin{align}    
  \nonumber  \widehat{L_3^{(1)}(V)}(\eta,y)&= \frac{1}{(2\pi)^{\frac d2}}\int \overline{L_3^{(2)}(V)(x+y,-y)}e^{-i\eta x}dx\\
 \nonumber  & = \frac{1}{(2\pi)^{\frac d2}}\overline{\int L_3^{(2)}(V)(z,-y)e^{i\eta (z-y)}dx}\\
  \label{id:relation-L31-L32}  &= e^{i\eta y}\overline{\widehat{L_3^{(2)}}(- \eta,-y)}.
\end{align}

Hence it suffices to compute $ \widehat{L_3^{(1)}(V)}$ in order to retrieve $\widehat{L_3(V)}$ as then
\begin{equation} \label{id:computation-L3-decomposition-Fourier} 
\widehat{L_3(V)}(\zeta,y) = \widehat{L_3^{(1)}(V)}(\zeta,y)+e^{i\zeta y}\overline{\widehat{L_3^{(1)}(V)}(-\zeta,-y)}.
\end{equation}
We infer from \eqref{id:formula-L1} that
$$
L_1(V)  = - i \int \int \int_0^t f(\xi) e^{i\xi x-i\theta(\xi) t} e^{i\zeta x} e^{-i(t-\tau)(\theta(\zeta+\xi)-\theta(\xi))}  \hat w(\zeta) \hat V(\zeta,0,\tau) \, d\tau d\zeta dW(\xi) .
$$

Using \eqref{equilibria} yields
\begin{eqnarray*}
    L_3^{(1)}(x,y)&=& - i \int \int_0^t f^2(\xi) e^{-i\xi y} e^{i\zeta x} e^{-i(t-\tau)(\theta(\zeta+\xi)-\theta(\xi))}  \hat w(\zeta) \hat V(\zeta,0,\tau) \, d\tau d\zeta d \xi  .
\end{eqnarray*}
So we get: 
$$
\widehat{L_3^{(1)}(V)}(\zeta,y)=-i(2\pi)^{\frac d2} \hat{w}(\zeta)\int  \int_0^t  f^2(\xi) e^{-i(\theta(\xi+\zeta)-\theta(\xi))(t-\tau)}e^{-i\xi y} \hat{V}(\zeta,0,\tau) \, d\tau d\xi.
$$
Injecting the above identity in \eqref{id:computation-L3-decomposition-Fourier} and using \eqref{id:relation-hatV-barhatV} finally gives
\[
\widehat{L_3(V)}(\zeta,y)=-i(2\pi)^{\frac d2} \hat{w}(\zeta)\int_0^t  \hat{V}(\zeta,0)\int  ( f(\xi)^2- f(\xi+\zeta)^2)e^{-i(\theta(\xi+\zeta)-\theta(\xi))(t-\tau)}e^{-i\xi y}d \tau\ d\xi.
\]
This is \eqref{id:formula-L3}.

\medskip

\noindent \underline{Formula for $L_4$}. It is very similar to $L_3$. We first decompose
\begin{align*}
L_4 (V) & =\E[\overline{Y(x+y)}L_2(V)(x)] + \E[\overline{L_2(V)(x+y)}Y(x)] \\
&= L_4^{(1)}(V)+L_4^{(2)}(V).
\end{align*}
As for the analogue decomposition for $L_3$, we have
\begin{equation}       \label{id:relation-L41-L42}
 \widehat{L_4^{(1)}(V)}(\eta,y) = e^{i\eta y}\overline{\widehat{L_4^{(2)}}(- \eta,-y)}.
\end{equation}
Hence it suffices to compute $L_4^{(1)}$. We infer from \eqref{id:formula-L2} that
$$
L_2(V)(x)= \frac{i}{(2\pi)^{\frac d2}}\int_0^t \iiint f(\xi) w(z) \hat V(\eta-\xi,z,\tau)e^{i\eta x+i\xi z} e^{-i(t-\tau)\theta(\eta)-i\tau \theta(\xi)}  \, dz\, d\eta \, d\tau \, dW(\xi)
$$
so that using \eqref{equilibria} one computes that
$$
    L_4^{(1)}(x,y)= \frac{i}{(2\pi)^{\frac d2}}\int_0^t  \iiint  f^2 (\xi)w(z) \hat V(\eta-\xi,z,\tau)e^{i(\eta-\xi) x+i\xi (z-y)} e^{i(t-\tau) (\theta(\xi)-\theta(\eta))}  \, dz\, d\eta \, d\tau \, d\xi.
$$
One thus obtains, using a change of variables, that
$$
\widehat{L_4^{(1)}}(\zeta,y)= i \int_0^t  \iint  f^2 (\xi)w(z) \hat V(\zeta,z,\tau)e^{i\xi (z-y)} e^{-i(t-\tau) (\theta(\xi + \zeta)-\theta(\xi))}  \, dz \, d\tau \, d\xi.
$$
Using successively \eqref{id:relation-L41-L42} and \eqref{id:relation-hatV-barhatV}, and then changing variables
\begin{align*}
\widehat{L_4^{(2)}}(\zeta,y) & =e^{i\zeta y}\overline{\widehat{L_4^{(1)}}(-\zeta,-y)} \\
& =-ie^{i\zeta y} \int_0^t  \iint  f^2 (\xi)w(z) \overline{\hat V(-\zeta,z,\tau)}e^{-i\xi (z+y)} e^{-i(t-\tau) (\theta(\xi)-\theta(\xi-\zeta))}  \, dz \, d\tau \, d\xi\\
& =-i  \int_0^t  \iint  f^2 (\xi)w(z) \hat V(\zeta,-z,\tau) e^{i(\zeta-\xi) (z+y)} e^{-i(t-\tau) (\theta(\xi)-\theta(\xi-\zeta))}  \, dz \, d\tau \, d\xi \\
& =-i  \int_0^t  \iint  f^2 (\xi+\zeta)w(z) \hat V(\zeta,z,\tau) e^{i \xi (z-y)} e^{-i(t-\tau) (\theta(\zeta+\xi)-\theta(\xi))}  \, dz \, d\tau \, d\xi .
\end{align*}
Combining the two identities above concludes the proof of \eqref{id:computation-L4V}.

\end{proof}

\subsection{The issue of low frequency regularity in the linear response}

\begin{proposition}\label{prop:magicProposition} Let $\sigma,\sigma_1 \geq 0$, $\sigma_1 < \frac{d}{2}$, $p_1>2$, $q_1\geq 2$ such that
\[
\frac2{p_1} + \frac{d}{q_1} = \frac{d}{2} -\sigma_1.
\]
Assuming that $\an{\xi}^{2\lceil \sigma \rceil} g \in W^{2,1}$ and $\tilde \theta \in W^{4,1}$ along with the ellipticity assumption, there exists a constant $C_\theta$ (decreasing with $\lambda_*$ and increasing with $\|\tilde \theta\|_{W^{4,1}}$) such that for all $U \in L^2_t,B_2^{-1/2 + \sigma_1,\sigma+\sigma_1-1/2}$,

\begin{equation}\label{ineq:magicProposition1.1}
    \big\|\int_0^\infty S(t-\tau) [U(\tau)Y(\tau)] d\tau \big\|_{L^{p_1}(\R,W^{\sigma,q_1}(\R^d,L^2(\Omega)))} \leq C_\theta \|\an{\xi}^{2\lceil \sigma \rceil} g \|_{W^{2,1}} \|U\|_{L^2_t,B_2^{-1/2+\sigma_1,\sigma+\sigma_1-1/2}}.
\end{equation}


\end{proposition}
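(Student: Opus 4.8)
The plan is to peel off the randomness and the propagator using the equilibrium structure, so that the bound becomes a frequency-localised oscillatory integral in the momentum variable $\xi$ which is damped by the uniform ellipticity, and then to conclude by a Schur test in the time variable.

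First, using the Galilean identity \eqref{id:formula-galilei-St} exactly as in the derivation of \eqref{id:formula-L1}, one rewrites, for $\theta_\xi(\eta):=\theta(\eta+\xi)-\theta(\xi)$,
\[
\int_0^{\infty} S(t-\tau)\big[U(\tau)Y(\tau)\big]\,d\tau
= \int f(\xi)\,e^{i\xi x-i\theta(\xi)t}\,\big(e^{-it\theta_\xi(D)}W_\xi\big)(x)\,dW(\xi),
\qquad
\widehat{W_\xi}(\eta)=\int_0^{\infty} e^{i\tau\theta_\xi(\eta)}\widehat{U}(\tau,\eta)\,d\tau .
\]
Since $\nabla^{\otimes 2}\theta_\xi(\cdot)=\nabla^{\otimes 2}\theta(\cdot+\xi)$ satisfies \eqref{id:uniform-ellipticity-theta} uniformly in $\xi$, the propagator $e^{-it\theta_\xi(D)}$ enjoys, uniformly in $\xi$, the Strichartz estimates of Proposition~\ref{pr:Strichartz-free-evolution} together with their $\dot H^{\sigma_1}$-loss versions (Sobolev embedding). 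I would then take the $L^2(\Omega)$-norm via the Wiener isometry, commute the Fourier multiplier $\langle D+\xi\rangle^{\sigma}$ through $e^{-it\theta_\xi(D)}$, move the $L^2_\xi$-norm outside the $L^{p_1}_tL^{q_1}_x$-norm by Minkowski (licit since $p_1,q_1\ge 2$), and apply the Strichartz bound in each slice $\xi$. This reduces \eqref{ineq:magicProposition1.1} to
\[
\int g(\xi)\,\big\| |D|^{\sigma_1}\langle D+\xi\rangle^{\sigma}W_\xi\big\|_{L^2_x}^2\,d\xi
\;\lesssim\; C_\theta^{2}\,\|\langle\xi\rangle^{2\lceil\sigma\rceil}g\|_{W^{2,1}}^{2}\,\|U\|_{L^2_tB_2^{-1/2+\sigma_1,\sigma+\sigma_1-1/2}}^{2},
\]
and a Fourier expansion turns the left-hand side into $\int |\eta|^{2\sigma_1}\iint_{(0,\infty)^2}\widehat{U}(\tau,\eta)\overline{\widehat{U}(\tau',\eta)}\,K(\tau-\tau',\eta)\,d\tau\,d\tau'\,d\eta$, where
\[
K(s,\eta)=\int g(\xi)\,\langle\eta+\xi\rangle^{2\sigma}\,e^{is\,(\theta(\eta+\xi)-\theta(\xi))}\,d\xi .
\]

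The core of the argument is the pointwise bound
\[
|K(s,\eta)|\;\lesssim\; C_\theta\,\langle\eta\rangle^{2\sigma}\,\|\langle\xi\rangle^{2\lceil\sigma\rceil}g\|_{W^{2,1}}\,\langle\lambda_* s\eta\rangle^{-2}.
\]
Using \eqref{id:def-tildetheta-theta0} one has $\theta(\eta+\xi)-\theta(\xi)=|\eta|^2+\psi_\eta(\xi)$ with $\psi_\eta(\xi)=2\eta\cdot\xi+\tilde\theta(\eta+\xi)-\tilde\theta(\xi)$, so the phase of the $\xi$-integral is $s\psi_\eta$ and
\[
\nabla_\xi\big(s\psi_\eta\big)(\xi)=s\Big(\int_0^1\nabla^{\otimes 2}\theta(\xi+u\eta)\,du\Big)\eta,
\qquad
\big|\nabla_\xi(s\psi_\eta)(\xi)\big|\ge \lambda_*\,|s|\,|\eta|
\]
by the ellipticity \eqref{id:uniform-ellipticity-theta}: the phase is non-stationary, with a quantitative lower bound. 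Integrating by parts twice in $\xi$ — which is why $g$ is asked for two derivatives and $\tilde\theta$ for four: the two derivatives land on the amplitude $g(\xi)\langle\eta+\xi\rangle^{2\sigma}$, whereas differentiating the cutoff vector field $\nabla\psi_\eta/|\nabla\psi_\eta|^2$ brings out derivatives of $\tilde\theta$ up to order four, which one pairs against $g$ so that only $\|\tilde\theta\|_{W^{4,1}}$ and lower norms of $g$ enter — and interpolating with the trivial bound valid for $|s\eta|\lesssim 1$ gives the stated estimate; the factor $\langle\eta+\xi\rangle^{2\sigma}$ is reduced to $\langle\eta\rangle^{2\sigma}$ by separating $|\xi|\le|\eta|$ from $|\xi|>|\eta|$, the latter region being absorbed by the weight $\langle\xi\rangle^{2\lceil\sigma\rceil}g$. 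I expect this to be the main obstacle, because of the bookkeeping needed to land the sharp power $\langle\eta\rangle^{2\sigma}$ and to keep every constant uniform in $s,\eta,\xi$ with the $\lambda_*$-dependence confined to $C_\theta$.

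Finally, a Schur test in time closes the estimate: since $\int_{\R}\langle\lambda_* s\eta\rangle^{-2}\,ds=c\,(\lambda_*|\eta|)^{-1}$, the convolution operator on $L^2_\tau$ with kernel $(\tau,\tau')\mapsto K(\tau-\tau',\eta)$ has norm $\lesssim C_\theta\,\langle\eta\rangle^{2\sigma}\,\|\langle\xi\rangle^{2\lceil\sigma\rceil}g\|_{W^{2,1}}\,(\lambda_*|\eta|)^{-1}$, whence
\[
\int g(\xi)\,\big\||D|^{\sigma_1}\langle D+\xi\rangle^{\sigma}W_\xi\big\|_{L^2_x}^{2}\,d\xi
\;\lesssim\; \frac{C_\theta}{\lambda_*}\,\|\langle\xi\rangle^{2\lceil\sigma\rceil}g\|_{W^{2,1}}\int |\eta|^{2\sigma_1-1}\langle\eta\rangle^{2\sigma}\,\|\widehat{U}(\cdot,\eta)\|_{L^2_\tau}^{2}\,d\eta .
\]
The weight $|\eta|^{2\sigma_1-1}\langle\eta\rangle^{2\sigma}$ is comparable to $|\eta|^{2(-1/2+\sigma_1)}$ for $|\eta|\lesssim 1$ and to $|\eta|^{2(\sigma+\sigma_1-1/2)}$ for $|\eta|\gtrsim 1$, so the last integral equals, up to constants, $\|U\|_{L^2_tB_2^{-1/2+\sigma_1,\sigma+\sigma_1-1/2}}^{2}$, which identifies exactly the space appearing in \eqref{ineq:magicProposition1.1} and yields the claim. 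I would note that restricting $\tau,\tau'$ to $(0,\infty)$ only improves the Schur bound, that $\sigma_1<d/2$ is what makes the admissible pair behind the Sobolev-loss Strichartz exist, and that $p_1>2$ keeps $(p_1,q_1)$ in the valid range.
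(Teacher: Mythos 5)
Your proposal is correct and follows essentially the same route as the paper's proof: the Wiener isometry reduces the estimate to a $g$-weighted integral over the momentum $\xi$ of slice-wise Strichartz norms, and the $-\tfrac12$ Besov gain comes from the time decay of the oscillatory kernel $K(s,\eta)=\int g(\xi)e^{is(\theta(\eta+\xi)-\theta(\xi))}d\xi$, whose phase is non-stationary in $\xi$ along $\eta$ with lower bound $\lambda_*|\eta|$ by the ellipticity assumption. The only differences are in implementation: you obtain the $\langle \lambda_* s\eta\rangle^{-2}$ decay by two integrations by parts and close with a Schur test in time, whereas the paper linearizes the phase via the change of variables $\eta\mapsto\eta_\xi$ and reads the same decay off the Fourier transform of the $W^{2,1}$ amplitude $g(\phi_\xi(\eta))\,\mathrm{jac}(\phi_\xi(\eta))$, closing with Cauchy--Schwarz in $t_1$; and you carry the weight $\langle\eta+\xi\rangle^{2\sigma}$ inside the kernel while the paper treats $\sigma\in\N$ by the Leibniz rule on $\partial^\alpha(YU)$ and interpolates.
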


\begin{proof}
    We start by taking $U$ in the Schwartz class to give a sense to the computations and we conclude by density. 
    
Set $L_1^\infty(U):=\int_0^\infty S(t-\tau)  Y(\tau) U(\tau)d\tau$. 
    
We denote for $\eta\in \R^d$, $t\in \R$, $S_\eta(t)$,  the Fourier multiplier by
\[
\xi \mapsto e^{-it(|\xi|^2 + 2 \eta \cdot \xi + (2\pi)^d (-1)^\iota \hat{w}*g(\xi+\eta)-\hat{w}*g(\eta))} = e^{it(\theta(\eta) - \theta(\xi+\eta))}.
\]

    \textbf{Step 1:} We compute $\E[\lvert L_1^\infty(U)\rvert^2]$.

We have the commutation relation 
\[
S(t)(e^{i\eta x}U)= \mathcal F^{-1}\Big( e^{-it\theta(\xi)} \hat U (\xi-\eta)\Big) = e^{i\eta x} \mathcal F^{-1}\Big( e^{-it\theta(\xi+\eta)}\hat U(\xi)\Big)
\]
and because the Fourier transform is taken only on the space variable, we get
\[
S(t)(e^{i\eta x}U) = e^{i\eta x - it\theta(\eta)} \mathcal F^{-1}\Big( e^{-it(\theta(\xi+\eta) - \theta(\eta))}\hat U(\xi)\Big).
\]
We recognize
\[
S(t)(e^{i\eta x}U) = e^{i\eta x - it\theta(\eta)} S_\eta(t)U.
\]

We deduce
    \begin{eqnarray*}
        S(t-\tau)(U(\tau) Y(\tau))&=&\int f(\eta)e^{-i\tau\theta(\eta)}S(t-\tau)(e^{i\eta x}U(\tau))dW(\eta)\\
        &=&\int f(\eta)e^{i\eta x}e^{-it\theta(\eta)}S_\eta(t-\tau)U(s)dW(\eta).
    \end{eqnarray*}
    
    Then we get, using the definition of Wiener integral: 
\[
        \E[\lvert L_1^\infty(U)\rvert^2]=\int_{\eta\in \R^d}g(\eta)\bigg\lvert \int_0^\infty S_\eta(t-\tau)U(\tau)d\tau \bigg\rvert^2d\eta. 
\]

    This concludes Step 1.
    
    \textbf{Step 2:} We claim that: 
\[
\lvert\lvert L_1^\infty(U)\rvert\rvert_{L_t^{p_1},L_x^{q_1},L_\omega^2}\leq C(\theta) \|g\|_{W^{2,1}} \lvert \lvert U\rvert\rvert_{L_t^2,B_2^{\sigma_1-\frac{1}{2},\sigma_1-\frac{1}{2}}}
\]
where $C(\theta)$ is a constant depending on $\hat w*g$.

    Using step 1 and Minkowski inequality we have: 

\[
\lvert\lvert L_1^\infty(U)\rvert\rvert_{L_t^{p_1},L_x^{q_1},L_\omega^2}^2\leq \int_{\eta\in\R^d}g(\eta)\bigg\lvert\bigg\lvert \int_0^\infty S_{\eta}(t-\tau)U(\tau)d\tau\bigg\rvert\bigg\rvert_{L_t^{p_1},L_x^{q_1}}^2d\eta. 
\]

    By Strichartz's inequality, we have:
    \[
    \lvert\lvert L_1^\infty(U)\rvert\rvert_{L_t^{p_1},L_x^{q_1},L_\omega^2}^2\leq \int_{\eta\in\R^d}g(\eta)\bigg\lvert\bigg\lvert\int_0^\infty S_{\eta}(-\tau)U(\tau)d\tau\bigg\rvert\bigg\rvert_{B_2^{\sigma_1,\sigma_1}}^2d\eta.
    \]

    We introduce the variable $U_1$ defined by $\hat{U_1}(\xi)=\lvert\xi\rvert^{\sigma_1}\hat{U}(\xi)$ and we have, by Parseval's identity: 
\begin{multline*}
\lvert\lvert L_1^\infty(U)\rvert\rvert_{L_t^{p_1},L_x^{q_1},L_\omega^2}^2
\\ 
\leq \int_{\eta\in\R^d}g(\eta)\int_{\xi\in\R^d}\int_0^\infty\int_0^\infty e^{i(t_1-t_2)(\xi^2-2\xi.\eta - \tilde \theta_\xi(\eta))}\hat{U_1}(t_1,\xi)\overline{\hat{U_1}(t_2,\xi)}dt_2dt_1d\xi d\eta
\end{multline*}
where $\tilde \theta_\xi (\eta) = \tilde \theta(\xi + \eta) - \tilde \theta(\eta)$.

    We perform the change of variable $\eta_\xi=\eta- \tilde \theta_\xi(\eta)\frac{\xi}{2\lvert \xi\rvert^2}$. It is a $C^1$-diffeomorphism. Indeed, we have that the Jacobian matrix of $\eta\mapsto \eta_\xi$ is the identity minus the Jacobian matrix of $\eta \mapsto \theta_\xi(\eta)\frac{\xi}{2\lvert \xi\rvert^2}$. This last matrix is of rank $1$ and writes $\frac{\xi}{2|\xi|} \nabla_\eta \frac{\tilde \theta_\xi}{|\xi|}$. We deduce that the Jacobian is invertible if $1-\an{\frac{\xi}{2|\xi|},\nabla_\eta \frac{\tilde \theta_\xi}{|\xi|} }$ does not vanish. By definition of $\tilde \theta_\xi$, we have 
    \[
    \nabla \frac{\tilde \theta_\xi}{|\xi|} = \int_{0}^1 \nabla^{\otimes 2} \tilde \theta (\eta + t \xi) \frac{\xi}{|\xi|} dt.
    \]
Since $\nabla^{\otimes 2} \theta = 2 Id + \nabla^{\otimes 2} \tilde \theta$, we deduce that 
    \[
    1-\an{\frac{\xi}{2|\xi|},\nabla_\eta \frac{\tilde \theta_\xi}{|\xi|} } = \frac12 \int_{0}^1 \frac{\xi^T}{|\xi|} \nabla^{\otimes 2} \theta (\eta + t \xi)\frac{\xi}{|\xi|} dt > \frac12 \lambda_* .
    \]
 We denote $\phi_\xi=\Tilde{\eta_\xi}^{-1}$. This gives, by also doing the change of variable $t=t_2-t_1$:
\begin{multline*}
\lvert\lvert L_1^\infty(U)\rvert\rvert_{L_t^{p_1},L_x^{q_1},L_\omega^2}^2 \\
\leq \int_{\eta\in\R^d} \int_{\xi\in\R^d}  g(\phi_\xi(\eta)) jac(\phi_\xi(\eta))\int_{\R}\int_{D_t} e^{-it(\xi^2-2\xi\eta)}\hat{U_1}(t_1,\xi)\overline{\hat{U_1}(t+t_1,\xi)}dt_1dtd\xi d\eta,
\end{multline*}
where $D_t=[-t,\infty]$ and $jac(\phi_\xi(\eta))$ stands for the Jacobian matrix of $\eta\mapsto \phi_\xi(\eta)$.

    We integrate over $\eta$ to get: 
\begin{multline*}
\lvert\lvert L_1^\infty(U)\rvert\rvert_{L_t^{p_1},L_x^{q_1},L_\omega^2}^2\\
\leq (2\pi)^{d/2} \int_{\xi\in\R^d} \int_{\R}\int_{D_t} \mathcal{F}_\eta\Big(g(\phi_\xi(\eta)) jac(\phi_\xi(\eta))\Big)(-2\xi t) e^{-it\xi^2}\hat{U_1}(t_1,\xi)\overline{\hat{U_1}(t+t_1,\xi)}dt_1dtd\xi. 
\end{multline*}

    We use Cauchy-Schwarz inequality over $t_1$ to get: 
\[
\lvert\lvert L_1^\infty(U)\rvert\rvert_{L_t^{p_1},L_x^{q_1},L_\omega^2}^2\leq \int_{\xi\in\R^d}\int_{\R} \Big\lvert\mathcal{F}_\eta\Big(g(\phi_\xi(\eta)) jac(\phi_\xi(\eta))\Big)(-2\xi t)\Big\rvert \lvert \lvert \hat{U_1}(t_1,\xi)\rvert\rvert_{L_{t_1}^2}^2 dtd\xi. 
\]

    Then, we get by doing the change of variable $\tau=t\lvert\xi\rvert$: 
\[
\lvert\lvert L_1^\infty(U)\rvert\rvert_{L_t^{p_1},L_x^{q_1},L_\omega^2}^2\lesssim \int_{\xi\in\R^d} \int_{\R}\Big\lvert \mathcal{F}_\eta\Big(g(\phi_\xi(\eta)) jac(\phi_\xi(\eta))\Big)(-2\tau \frac{\xi}{|\xi|})\Big\rvert \lvert \xi\rvert^{-1}\lvert \lvert \hat{U_1}(t_1,\xi)\rvert\rvert_{L_{t_1}^2}^2 d\tau d\xi. 
\]

We claim that $(\tau,\xi) \mapsto \mathcal{F}_\eta\Big(g(\phi_\xi(\eta)) jac(\phi_\xi(\eta))\Big)(-2\tau \frac{\xi}{|\xi|}) $ belongs to $L^\infty_\xi(\R^d,L^1_\tau(\R))$. It is sufficient for this to prove that
\[
(\xi,\eta)\mapsto g(\phi_\xi(\eta)) jac(\phi_\xi(\eta))
\]
belongs to $L^\infty_\xi(\R^d,W^{2,1}_\eta(\R^d))$. This is implied by the fact that $g \in W^{2,1}(\R^d)$ and $\phi_\xi \in L^\infty_\xi W^{3,\infty}$, the latter coming from the fact that $\tilde \theta\in W^{4,1}$ which implies that $\tilde \theta_\xi \in L^\infty_\xi(\R^d,W^{3}_\eta(\R^d))$. We therefore get
\[
\lvert\lvert L_1^\infty(U)\rvert\rvert_{L_t^{p_1},L_x^{q_1},L_\omega^2}^2\leq C_\theta\|g\|_{W^{2,1}} \int_{\xi\in\R^d} \lvert \lvert \lvert \xi\rvert^{\sigma_1-\frac{1}{2}}\hat{U}(t_1,\xi)\rvert\rvert_{L_{t_1}^2}^2 dtd\xi=C_\theta\|g\|_{W^{2,1}} \lvert\lvert U\rvert\rvert_{L_t^2,B_2^{\sigma_1-\frac{1}{2},\sigma_1-\frac{1}{2}}}.
\]

    \textbf{Step 3:} We first suppose that $\sigma\in \N$.
    
    For $\alpha\in\N^d$ we write: $\lvert\alpha\rvert=\sum_{j=1}^d\alpha_j$ and $\partial^\alpha=\prod_{j=1}^d\partial^{\alpha_j}$.
    
    For $\eta\in\R^d$ we write $\eta^\alpha=\prod_{j=1}^d\eta_j^{\alpha_j}$.
    
    We have for any $\alpha\in\N^d$: 
    
    $$\partial^\alpha L_1^\infty(U)=\sum_{\gamma+\beta=\alpha}C(\alpha,\beta)\int_0^tS(t-s)\partial^\beta Y(s)\partial^\gamma U(s) ds.$$
    
    Indeed, $Y$ is almost everywhere differentiable and there holds, for $\lvert \beta\rvert\leq \lceil s\rceil$: $$\partial^\beta Y(s)=\int_{\R^d}i^{\lvert \beta\rvert}\eta^\beta f(\eta)e^{-is\theta(\eta)+i\eta x}dW(\eta). $$
    
    Replacing $Y$ by $\partial^\beta Y$ consists in replacing $f(\eta)$ by $i^{\lvert \beta\rvert}\eta^\beta f(\eta)$ while leaving $\theta$ unchanged. 
    
    Thus, using step 2 we get: 
    
    \begin{equation}\label{Str1}
        \lvert\lvert L_1^\infty(U)\rvert\rvert_{L_t^{p_1},W_x^{\sigma,q_1},L_\omega^2}^2\lesssim \lvert\lvert U\rvert\rvert_{L_t^2,B_2^{\sigma_1-\frac{1}{2},\sigma_1+\sigma-\frac{1}{2}}},
    \end{equation}
    
    where the constants depends on $\underset{\lvert \alpha\rvert\leq \sigma}{sup}\Big\lvert\Big\lvert \mathcal{F}_\eta\Big( \phi_\xi(\eta)^{2\alpha} g(\phi_\xi(\eta)) jac(\phi_\xi(\eta))\Big)\Big\rvert\Big\rvert_{L^1}.$

    We conclude by interpolation that inequalities (\ref{Str1})  holds for any $\sigma \geq 0$.

\end{proof}

\begin{corollary}\label{cor:CKtoMP} With the notations of Proposition \ref{prop:magicProposition}, we have that
\[
L_1^{\infty, 0} : U \mapsto \int_{0}^t S(t-\tau) [U(\tau) Y(\tau)]d\tau, \quad L_1^{\infty,\infty} : U \mapsto \int_{t}^\infty S(t-\tau) [U(\tau) Y(\tau)]d\tau
\]
are continuous operators from $L^2_t B_2^{-1/2+\sigma_1,-1/2+\sigma_2 +\sigma}$ to $L^{p_1}_t W^{\sigma,q_1}_x$.

\end{corollary}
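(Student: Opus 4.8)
The plan is to obtain the Corollary from Proposition \ref{prop:magicProposition} by a single application of the Christ--Kiselev lemma. I would first recall that lemma in its operator-valued form (see e.g.\ \cite{Tao06} and the references therein): if $\mathcal X,\mathcal Y$ are Banach spaces, $I\subset\R$ an interval, and $K(t,\tau)$ an $\mathcal X\to\mathcal Y$ operator-valued kernel such that $U\mapsto\int_I K(t,\tau)U(\tau)\,d\tau$ is bounded from $L^a(I;\mathcal X)$ to $L^b(I;\mathcal Y)$ with $a<b$, then the retarded and advanced truncations $U\mapsto\int_{\tau<t}K(t,\tau)U(\tau)\,d\tau$ and $U\mapsto\int_{\tau>t}K(t,\tau)U(\tau)\,d\tau$ are bounded from $L^a(I;\mathcal X)$ to $L^b(I;\mathcal Y)$, with norms controlled by that of the full operator.

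Next I would recast the Corollary as an instance of this. Set $\mathcal X=B_2^{-1/2+\sigma_1,\,\sigma+\sigma_1-1/2}(\R^d)$, $\mathcal Y=W^{\sigma,q_1}(\R^d,L^2(\Omega))$, and let $K(t,\tau)$ be the operator $h\mapsto S(t-\tau)[h\,Y(\tau)]$, which indeed maps $\mathcal X$ to $\mathcal Y$ because almost surely $Y(\tau)$ is a continuous function of at most polynomial growth, a fact already exploited in the proof of Proposition \ref{prop:magicProposition}. With these identifications, Proposition \ref{prop:magicProposition} says precisely that $U\mapsto\int_0^\infty K(t,\tau)U(\tau)\,d\tau=L_1^\infty(U)$ is bounded from $L^2_t(\mathcal X)$ to $L^{p_1}_t(\mathcal Y)$ under the hypotheses $\an{\xi}^{2\lceil\sigma\rceil}g\in W^{2,1}$, $\tilde\theta\in W^{4,1}$ and the ellipticity assumption, which are exactly those of the Corollary.

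Then I would apply the lemma on $I=(0,\infty)$ with $a=2$, $b=p_1$: the scaling identity $\frac2{p_1}+\frac d{q_1}=\frac d2-\sigma_1$ together with $p_1>2$ gives the strict inequality $2<p_1$ needed by Christ--Kiselev. Its retarded truncation is $L_1^{\infty,0}(U)=\int_0^t K(t,\tau)U(\tau)\,d\tau$ and its advanced truncation is $L_1^{\infty,\infty}(U)=\int_t^\infty K(t,\tau)U(\tau)\,d\tau$; since these add up to $L_1^\infty$, it is in fact enough to bound one of them and deduce the other by difference with the operator already controlled by Proposition \ref{prop:magicProposition}. Should $t$ be allowed to range over all of $\R$, the lower endpoint $0$ is handled by subtracting $\int_{-\infty}^0 K(t,\tau)U(\tau)\,d\tau = S(t)\big(\int_{-\infty}^0 S(-\tau)[U(\tau)Y(\tau)]\,d\tau\big)$, which is controlled by the very same computation as in Proposition \ref{prop:magicProposition} with the $\tau$-integral taken over $(-\infty,0)$ instead of $(0,\infty)$. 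This yields the asserted continuity from $L^2_t B_2^{-1/2+\sigma_1,\,-1/2+\sigma_1+\sigma}$ to $L^{p_1}_t W^{\sigma,q_1}_x$.

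I do not anticipate a real obstacle here: all the analytic work lives in Proposition \ref{prop:magicProposition}. The only points that call for a sentence of care are that Christ--Kiselev is being used in the Banach-space-valued setting with a target $\mathcal Y$ carrying both a spatial Sobolev norm and the probabilistic $L^2(\Omega)$ integration --- which is exactly what the standard proof of the lemma covers --- and that the \emph{strict} inequality $p_1>2$, which fails at the endpoint $p_1=2$, is precisely what renders the time truncation harmless.
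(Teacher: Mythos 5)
Your argument is correct and is exactly the paper's: the authors dispose of this corollary with the single sentence that it is ``a standard application of the Christ--Kiselev lemma,'' and your write-up simply makes explicit the identifications (kernel $K(t,\tau)h=S(t-\tau)[hY(\tau)]$, exponents $a=2<b=p_1$, the Banach-space-valued form of the lemma) that this standard application requires. No gap; you have correctly identified that the strict inequality $p_1>2$ from Proposition \ref{prop:magicProposition} is what makes the truncation legitimate.
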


\begin{proof} This is a standard application of the Christ-Kiselev lemma.
\end{proof}

\begin{corollary}\label{corollary:l1}
The operator 
\[
L_1\ : V \ \mapsto L_1(V)= -i\int_0^t S(t-\tau)\big[ w*V(\cdot,0)Y \big]d\tau 
\]
is continuous from $L^\infty_y L^2_t B_2^{-1/2,s_c}$ to $E_Z$ and thus from $E_V$ to $E_Z$.
\end{corollary}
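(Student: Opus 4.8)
\noindent\emph{Proof strategy.} The plan is to view $L_1$ as the retarded operator $L_1^{\infty,0}$ of Corollary~\ref{cor:CKtoMP} precomposed with the elementary map $V\mapsto w\ast V(\cdot,0)$, and then to reach each of the four summands of $\|\cdot\|_Z$ in \eqref{normZ} by one application of Corollary~\ref{cor:CKtoMP} with a well chosen admissible quadruple $(\sigma,\sigma_1,p_1,q_1)$. I first carry out the reduction. Since $\langle y\rangle w$ is a finite measure, $w\in M$; convolution by $w$ commutes with the Littlewood--Paley projections and is bounded on $L^2_x$, hence on $B_2^{-1/2,s_c}(\R^d)$ with operator norm $\le\|w\|_M$. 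Evaluating the second variable at $0$ only costs the supremum in $y$, so with $U(\tau):=w\ast V(\cdot,0,\tau)$ and $L_1(V)=-i\,L_1^{\infty,0}(U)$ one gets
\[
\|U\|_{L^2_tB_2^{-1/2,s_c}}\ \le\ \|w\|_M\,\|V\|_{L^\infty_yL^2_tB_2^{-1/2,s_c}} ,
\]
and it remains to show $L_1^{\infty,0}\colon L^2_tB_2^{-1/2,s_c}\to E_Z$.

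For the four pieces of $\|\cdot\|_Z$ I would apply Corollary~\ref{cor:CKtoMP} (with $\sigma_2=\sigma_1$ in its statement) with $(\sigma,\sigma_1,p_1,q_1)$ equal, respectively, to $(s_c,0,\infty,2)$ for $\|\cdot\|_{L^\infty_tL^2_\omega H^{s_c}_x}$; to $(s_c,0,p,p)$, $p=2\tfrac{d+2}{d}$, for $\|\cdot\|_{L^p_tW^{s_c,p}_xL^2_\omega}$ (the diagonal Schr\"odinger pair); to $(0,s_c,d+2,d+2)$ for $\|\cdot\|_{L^{d+2}_{t,x}L^2_\omega}$; and to $(\tfrac14,\tfrac{d-3}4,4,\tfrac{4d}{d+1})$ for $\|\cdot\|_{L^4_tB_q^{0,1/4}L^2_\omega}$. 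In each case one checks the admissibility relation $\tfrac2{p_1}+\tfrac d{q_1}=\tfrac d2-\sigma_1$, the constraints $\sigma\ge0$, $0\le\sigma_1<\tfrac d2$, $p_1>2$, $q_1\ge2$ (here $d\ge4$ is used, e.g.\ so that $\sigma_1=\tfrac{d-3}4\ge0$ and $q_1=\tfrac{4d}{d+1}\ge2$), and the source embedding $B_2^{-1/2,s_c}\hookrightarrow B_2^{-1/2+\sigma_1,\,\sigma_1+\sigma-1/2}$, which reduces to $\sigma_1\ge0$ and $\sigma_1+\sigma\le s_c+\tfrac12$; these all hold. The constants are then controlled by $\lambda_*^{-1}$, $\|\tilde\theta\|_{W^{4,1}}$ and $\|\langle\xi\rangle^{2\lceil s_c\rceil}g\|_{W^{2,1}}$ as in Proposition~\ref{prop:magicProposition} (note $\sigma\le s_c$, so $\lceil\sigma\rceil\le\lceil s_c\rceil$ and hence $\langle\xi\rangle^{2\lceil\sigma\rceil}g\in W^{2,1}$), and the time continuity needed for membership in $E_Z$ is the usual continuity of a Duhamel integral. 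For the endpoint $p_1=\infty$ I would avoid any discussion of Christ--Kiselev at $q=\infty$ and simply rerun Steps~1--3 of the proof of Proposition~\ref{prop:magicProposition} with $\int_0^t$ in place of $\int_0^\infty$: there the ``Strichartz'' step collapses into Parseval's identity, the truncation to $[0,t]$ only shrinks the domains of integration, and the resulting bound is uniform in $t$.

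The one genuinely non-mechanical point, which I expect to be the main obstacle, is the Besov summand $\|\cdot\|_{L^4_tB_q^{0,1/4}L^2_\omega}$ with $q=\tfrac{4d}{d+1}>2$: on high frequencies $W^{1/4,q}\not\hookrightarrow B_q^{0,1/4}$, so Corollary~\ref{cor:CKtoMP} does not deliver it off the shelf. I would instead establish a Besov-valued refinement of Proposition~\ref{prop:magicProposition} by decomposing $L_1^{\infty,0}(U)$ into Littlewood--Paley blocks $\varphi_n$, performing a Bony paraproduct on the product $U\,Y$ (so that $\varphi_n(UY)$ becomes a finite sum of terms $\varphi_{n_1}Y\cdot\varphi_{m_1}U$ with $(n_1,m_1)$ in the standard frequency-interaction set), and then applying the estimate of Proposition~\ref{prop:magicProposition} block by block, with $g$ replaced by $\chi_{n_1}^2 g$; summing the resulting $\ell^2$ dyadic sums in $n,n_1,m_1$ reconstitutes the $B_q^{0,1/4}$ norm on the target side and the $B_2^{-1/2,s_c}$ norm on the source side. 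The delicate sub-case is that of low-frequency blocks $n_1<0$, where one must control $\|\chi_{n_1}^2\,|\xi|^{-k}\langle\xi\rangle^{2\lceil\sigma\rceil}g\|_{W^{2,1}}$ for the relevant $k$; this is finite precisely because $d\ge4$ makes $|\xi|^{-k}$ locally integrable near the origin, which is where the dimensional restriction enters (this is exactly the argument sketched in the commented passage in the proof of Proposition~\ref{prop:magicProposition}). Finally, ``thus from $E_V$ to $E_Z$'' is immediate from \eqref{spaceV}--\eqref{normV}, since $\|V\|_{L^\infty_yL^2_tB_2^{-1/2,s_c}}\le\|V\|_V$, whence $\|L_1(V)\|_Z\lesssim\|w\|_M\,\|V\|_V$.
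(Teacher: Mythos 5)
Your argument is, for its core, exactly the paper's: the paper's proof of this corollary consists of a single line applying Corollary \ref{cor:CKtoMP} with the quadruples $(p_1,q_1,\sigma_1,\sigma)$ equal to $(p,p,0,s_c)$, $(d+2,d+2,s_c,0)$, $(4,q,\frac{d-3}{4},0)$ and $(\infty,2,0,s_c)$, which is precisely the reduction you carry out; the preliminary step $V\mapsto w*V(\cdot,0)$, the admissibility checks, and the embedding $B_2^{-1/2,s_c}\hookrightarrow B_2^{-1/2+\sigma_1,\sigma+\sigma_1-1/2}$ are left implicit in the paper but are verified correctly in your write-up. The one place you diverge is the fourth summand of $\|\cdot\|_Z$: the paper takes $\sigma=0$ there, which yields only $L^4_tL^q_xL^2_\omega$ --- consistent with the space $E_Z$ as written in \eqref{spaceZ}, but not with the Besov norm $L^4_tB_q^{0,\frac14}L^2_\omega$ appearing in \eqref{normZ} --- whereas you take $\sigma=\frac14$, correctly observe that for $q=\frac{4d}{d+1}>2$ the space $W^{1/4,q}$ does not embed into $B_q^{0,1/4}$ (which uses $\ell^2$ dyadic summation), and propose a Littlewood--Paley/paraproduct refinement of Proposition \ref{prop:magicProposition}, applied block by block with $g$ replaced by its frequency localisations and with $d\ge4$ used to handle the low-frequency blocks. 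That refinement is only sketched (the summability over the frequency-interaction set and the $W^{2,1}$ control of the localised $g$ near $\xi=0$ are asserted rather than proved), but it addresses a genuine mismatch internal to the paper rather than a defect of your own argument; if one takes $E_Z$ with the $L^4_tL^q_x$ component as in \eqref{spaceZ} --- which is all that is actually used later, e.g.\ in Proposition \ref{prop:embed} --- then your first, mechanical argument already suffices and coincides with the paper's. The final deduction of continuity from $E_V$ to $E_Z$ via $\|V\|_{L^\infty_yL^2_tB_2^{-1/2,s_c}}\le\|V\|_V$ is also as in the paper.
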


\begin{proof}
    
    
We apply Corollary \ref{cor:CKtoMP} to $(p_1,q_1,\sigma_1,\sigma)$ equal to either 
\[
(p,p,0,s_c),\, (d+2,d+2,s_c,0),\, (4,q,\frac{d-3}{4},0) \text{or }(\infty,2,0,s_c) 
\]
to get the result.

    
    
    

\end{proof}

\begin{corollary}\label{corollary:l2}
The operator 
\[
L_2\ : V \ \mapsto L_2(V)= i\int_0^t S(t-\tau)\big[ \int dz w(z)V(x,z)Y(x+z) \big]d\tau 
\]
is continuous from $L^\infty_y L^2_t B_2^{-1/2,s_c}$ to $E_Z$ and thus from $E_V$ to $E_Z$.

\end{corollary}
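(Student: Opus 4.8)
The plan is to reduce Corollary~\ref{corollary:l2} to the estimates already obtained for $L_1$ (Corollary~\ref{corollary:l1}, equivalently Corollary~\ref{cor:CKtoMP}), by viewing $L_2$ as a superposition over the auxiliary variable $z$ of operators of exactly the type already controlled. Writing the translate $T_z Y(\tau)=Y(\cdot+z,\tau)$, I would first record the identity
\[
L_2(V)=i\int \Lambda_z\big(V(\cdot,z,\cdot)\big)\,w(z)\,dz,\qquad \Lambda_z(U):=\int_0^t S(t-\tau)\big[\,U(\tau)\,T_z Y(\tau)\,\big]\,d\tau ,
\]
which is just \eqref{id:formula-L2} before the Galilei change of variables. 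As in Proposition~\ref{prop:magicProposition}, I would take $V$ to be a Schwartz function to justify all manipulations and conclude by density; note that $V(\cdot,z,\cdot)$ then lies in $L^2_t B_2^{-1/2,s_c}$ for every $z$, with norm at most $\|V\|_{L^\infty_y L^2_t B_2^{-1/2,s_c}}$.

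The next step, which I regard as the only genuinely new point, is to observe that $\Lambda_z$ is, for each fixed $z$, an operator of the form $L_1^{\infty,0}$ of Corollary~\ref{cor:CKtoMP} with the equilibrium $Y$ replaced by its spatial translate $T_z Y$, and that the estimates of Proposition~\ref{prop:magicProposition} and Corollary~\ref{cor:CKtoMP} hold for $T_z Y$ \emph{uniformly in $z$}. This is because $T_z Y(\cdot,\tau)=\int (f(\xi)e^{i\xi z})e^{i\xi x-i\theta(\xi)\tau}\,dW(\xi)$ has a momentum amplitude of the same modulus $|f(\xi)e^{i\xi z}|=f(\xi)$, hence the same $g=f^2$; inspecting the proof of Proposition~\ref{prop:magicProposition}, the translation enters only through $g$ (the identity for $\E[|L_1^\infty(U)|^2]$ in Step~1 sees only $g$, and in Step~3 the differentiation $\partial^\beta(T_zY)$ merely multiplies the amplitude by $i^{|\beta|}\eta^\beta e^{i\eta z}$, which leaves the controlling $W^{2,1}$ quantities untouched). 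Granting this, I would apply Corollary~\ref{cor:CKtoMP} with $Y\to T_z Y$ to the same four admissible tuples $(p_1,q_1,\sigma_1,\sigma)\in\{(p,p,0,s_c),(d+2,d+2,s_c,0),(4,q,\tfrac{d-3}{4},0),(\infty,2,0,s_c)\}$ used in the proof of Corollary~\ref{corollary:l1}, together with the standard Strichartz/Christ--Kiselev continuity-in-time argument, to obtain a constant $C_\theta$ independent of $z$ such that $\|\Lambda_z(U)\|_{E_Z}\le C_\theta\|U\|_{L^2_t B_2^{-1/2,s_c}}$ for all $z\in\R^d$.

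Finally I would integrate in $z$: by Minkowski's integral inequality for the norm $\|\cdot\|_Z$ and the previous bound with $U=V(\cdot,z,\cdot)$,
\[
\|L_2(V)\|_{E_Z}\le\int\big\|\Lambda_z\big(V(\cdot,z,\cdot)\big)\big\|_{E_Z}\,|w|(dz)\le C_\theta\,\|w\|_M\,\|V\|_{L^\infty_y L^2_t B_2^{-1/2,s_c}},
\]
which is finite since $\an{y}w$, and hence $w$, is a finite measure; and since $\|\cdot\|_{L^\infty_y L^2_t B_2^{-1/2,s_c}}\le\|\cdot\|_V$ this also yields continuity from $E_V$ to $E_Z$. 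Thus the main obstacle is the uniformity in $z$ of the linear estimate, which amounts to checking that the translated equilibrium $T_z Y$ enters Proposition~\ref{prop:magicProposition} only via its modulus $g=|f|^2$; the $z$-integration and everything else is a routine repetition of the $L_1$ argument, exploiting $\|w\|_M<\infty$ and the $L^\infty_y$ slot of the $E_V$-norm.
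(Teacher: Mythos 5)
Your proposal is correct and follows essentially the same route as the paper: write $L_2(V)$ as a superposition over $z$ of $L_1$-type operators, apply the linear estimate of Corollary~\ref{cor:CKtoMP} for each $z$ with a $z$-independent constant, and integrate against $|w|(dz)$ using that $w$ is a finite measure. The only (minor) difference is in how the uniformity in $z$ is obtained: you re-inspect Proposition~\ref{prop:magicProposition} to check that the translated equilibrium $T_zY$ enters only through $g=|f e^{i\xi z}|^2=f^2$, whereas the paper instead uses that $S(t)$ commutes with translations to rewrite $\int_0^t S(t-\tau)[V(\cdot,z)\,T_zY]\,d\tau$ as a translate of $\int_0^t S(t-\tau)[(T_{-z}V(\cdot,z))\,Y]\,d\tau$ and then invokes the translation invariance of the $E_Z$ and $B_2^{-1/2,s_c}$ norms, which avoids reopening the earlier proof; both arguments are valid.
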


\begin{proof}
    We begin by writing that, as $w$ is a finite measure:
    
    $$\lvert\lvert L_2(V)\rvert\rvert_{E_Z}\lesssim \int_{z\in \R^d}\bigg\lvert\bigg\lvert \int_0^t S(t-\tau)V(\cdot,z)T_zYd\tau\bigg\rvert\bigg\rvert_{E_Z}|w|(z)dz. $$
    
    For $z\in \R^d$, we have: 
    
    $$\bigg\lvert\bigg\lvert \int_0^t S(t-\tau)V(\cdot,z)T_zYd\tau\bigg\rvert\bigg\rvert_{E_Z}=\bigg\lvert\bigg\lvert \int_0^t S(t-\tau)T_{-z}V(\cdot,z)Yd\tau\bigg\rvert\bigg\rvert_{E_Z}.$$
    
    By applying Corollary \ref{cor:CKtoMP} as in Corollary \ref{corollary:l1} and using the invariance of the $B_2^{-1/2,s_c}$ norm under translations, we get: 
    
    $$\bigg\lvert\bigg\lvert \int_0^t S(t-\tau)T_{-z}V(\cdot,z)Yd\tau\bigg\rvert\bigg\rvert_{E_Z}\lesssim \lvert\lvert T_{-z}V(\cdot,z)\rvert\rvert_{L_t^2,B_2^{-\frac{1}{2},s_c}}=\lvert\lvert V(\cdot,z)\rvert\rvert_{L_t^2,B_2^{-\frac{1}{2},s_c}},$$
    
    and finally: 
    
    $$\bigg\lvert\bigg\lvert \int_0^t S(t-\tau)T_{-z}V(\cdot,z)Yd\tau\bigg\rvert\bigg\rvert_{E_Z}\lesssim \lvert\lvert V\rvert\rvert_{E_V}$$
    
    Then, we can conclude by writing that: 
    
    $$\lvert\lvert L_2(V)\rvert\rvert_{E_Z}\lesssim \lvert\lvert w \rvert\rvert_{M}\lvert\lvert V\rvert\rvert_{E_V}.$$
\end{proof}

\subsection{Estimates and invertibility on the last linear terms}

\begin{proposition} \label{pr:continuity-L4}

The operators $L_3$ and $L_4$ are continuous on $\mathcal C_y L^2_{t,x}\cap L^\infty_yL^2_{t,x}$ with for $k=3,4$,
\begin{equation} \label{bd:L4}
\| L_k (V)\|_{L^\infty_y L^2_{t,x}}\leq C_{\theta,w,g} \| V\|_{L^\infty_y L^2_{t,x}}.
\end{equation}
Moreover, we have that $L_4(V)$ and $L_3 (V)$ belong to $\mathcal C_y L^2_{t,x}$.

The constant is of the form $C_{\theta,w,g}=C_\theta \| \langle y \rangle w\|_{L^1} \| \nabla g\|_{W^{2,1}}$ for a constant $C_\theta$ that is decreasing with $\lambda_*$ and increasing with $\| \nabla^{\otimes 3}\Tilde \theta \|_{L^{\infty}}$.

\end{proposition}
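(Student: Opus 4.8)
The idea is to turn $L_3$ and $L_4$ into Volterra convolution operators in $(t,x)$ with an explicit oscillatory kernel, using the representation formulas \eqref{id:formula-L3} and \eqref{id:computation-L4V}, and to reduce boundedness to an $L^1$-in-time estimate on that kernel. Taking the Fourier transform in $x$ in \eqref{id:computation-L4V} (dual variable $\zeta$) and writing $g=f^2$, one gets for each fixed $\zeta$
\[
\widehat{L_4(V)}(\zeta,y,t)=-i\int_{\R^d}w(z)\int_0^t\mathcal K(\zeta,z-y,t-\tau)\,\widehat V(\zeta,z,\tau)\,d\tau\,dz,
\]
where, with $g_\zeta(\xi)=g(\xi+\zeta)-g(\xi)$ and $\theta_\zeta(\xi)=\theta(\xi+\zeta)-\theta(\xi)$,
\[
\mathcal K(\zeta,a,s)=\int_{\R^d}g_\zeta(\xi)\,e^{i\xi\cdot a-is\theta_\zeta(\xi)}\,d\xi ;
\]
$L_3$ has the same form with $\widehat V(\zeta,z,\tau)$ replaced by $\widehat w(\zeta)\widehat V(\zeta,0,\tau)$ and $z-y$ by $-y$. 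Then Plancherel in $x$, Minkowski's inequality (pulling out $\int|w|(z)\,dz$) and Young's inequality in the $t$-convolution give
\[
\|L_4(V)\|_{L^\infty_yL^2_{t,x}}\le\Big(\sup_y\int_{\R^d}|w|(z)\,\sup_{\zeta}\|\mathcal K(\zeta,z-y,\cdot)\|_{L^1_s}\,dz\Big)\|V\|_{L^\infty_yL^2_{t,x}},
\]
and likewise $\|L_3(V)\|_{L^\infty_yL^2_{t,x}}\le\|\widehat w\|_{L^\infty}\big(\sup_{y,\zeta}\|\mathcal K(\zeta,-y,\cdot)\|_{L^1_s}\big)\|V\|_{L^\infty_yL^2_{t,x}}$ with $\|\widehat w\|_{L^\infty}\lesssim\|w\|_M$. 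So everything reduces to the kernel bound $\|\mathcal K(\zeta,a,\cdot)\|_{L^1_s}\le C_\theta\,\langle a\rangle\,\|\nabla g\|_{W^{2,1}}$, uniformly in $\zeta$, which together with $\langle y\rangle w\in M$ yields the stated constant.

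For the kernel bound I would straighten the phase exactly as in the proof of Proposition \ref{prop:magicProposition}. Writing $\theta_\zeta(\xi)=2\zeta\cdot\xi+|\zeta|^2+\tilde\theta_\zeta(\xi)$, the map $\xi\mapsto\xi'=\xi+\tilde\theta_\zeta(\xi)\tfrac{\zeta}{2|\zeta|^2}$ has Jacobian determinant $\tfrac12\int_0^1\tfrac{\zeta^\top}{|\zeta|}\nabla^{\otimes 2}\theta(\xi+u\zeta)\tfrac{\zeta}{|\zeta|}\,du>\tfrac{\lambda_*}{2}$ by the ellipticity assumption \eqref{id:uniform-ellipticity-theta}, hence is a global $C^1$-diffeomorphism for each fixed $\zeta$, with inverse controlled in $W^{3,\infty}$ using $\tilde\theta\in W^{4,1}$. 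In the new variable $\theta_\zeta$ is affine, $\theta_\zeta=2\zeta\cdot\xi'+|\zeta|^2$, so that $\mathcal K(\zeta,a,s)=(2\pi)^{d/2}e^{-is|\zeta|^2}\widehat{G_{\zeta,a}}(2s\zeta)$ with $G_{\zeta,a}(\xi')=g_\zeta(\xi(\xi'))\,\mathrm{jac}(\xi')\,e^{i\xi(\xi')\cdot a}$; therefore $\|\mathcal K(\zeta,a,\cdot)\|_{L^1_s}$ equals, up to the factor $\tfrac1{2|\zeta|}$, the $L^1$-norm of $\widehat{G_{\zeta,a}}$ along the half-line $\R_+\tfrac{\zeta}{|\zeta|}$. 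This restriction norm is controlled by $\|\widehat{G_{\zeta,a}}\|_{L^1(\mathrm{line})}\lesssim\|(G_{\zeta,a})_\#\|_{H^{1/2+\varepsilon}(\R)}$, where $(G_{\zeta,a})_\#$ is the X-ray transform of $G_{\zeta,a}$ in the direction $\tfrac{\zeta}{|\zeta|}$, and then, by Minkowski and Sobolev embedding in $t$, by weighted $L^1$-norms of $G_{\zeta,a}$ and of $\partial_{\xi'_\parallel}G_{\zeta,a}$ only — which is why only one derivative of the (ellipticity-controlled) change of variables, hence only $\nabla^{\otimes 3}\tilde\theta$, enters the constant. The factor $g_\zeta=\int_0^1\nabla g(\cdot+u\zeta)\cdot\zeta\,du$ provides a gain $|\zeta|$ that cancels $\tfrac1{|\zeta|}$ and converts norms of $g_\zeta$ into $\|\nabla g\|_{W^{2,1}}$; the phase $e^{i\xi(\xi')\cdot a}$ and the $\tfrac{\zeta\cdot a}{|\zeta|^2}\tilde\theta_\zeta$-type phase produced by the substitution contribute at most the weight $\langle a\rangle$; the decay needed for the transverse integration in the X-ray transform comes from $\langle\xi\rangle^{2\lceil s_c\rceil}g\in W^{2,1}$ and $d\ge 4$.

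The continuity $y\mapsto L_k(V)(\cdot,y,\cdot)\in L^2_{t,x}$ then follows from the same estimates by dominated convergence: $a\mapsto\|\mathcal K(\zeta,a,\cdot)\|_{L^1_s}$ is continuous (interpolate the $\langle a\rangle$-bound with the elementary bound $|\mathcal K(\zeta,a,s)-\mathcal K(\zeta,a',s)|\le|a-a'|\,\||\xi|\,g_\zeta\|_{L^1}$ at fixed $s$), and the integrand of the $z$-integral is dominated by an $|w|(z)\,dz$-integrable function thanks to $\langle y\rangle w\in M$; assembling the bounds of the first paragraph gives the constant $C_{\theta,w,g}=C_\theta\|\langle y\rangle w\|_{L^1}\|\nabla g\|_{W^{2,1}}$ with the asserted monotonicity in $\lambda_*$ and $\|\nabla^{\otimes 3}\tilde\theta\|_{L^\infty}$.

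The main obstacle is the kernel bound in the low-frequency regime $\zeta\to0$: there the Hessian of $\theta_\zeta$ degenerates, $\nabla^{\otimes 2}\theta_\zeta=\nabla^{\otimes 2}\tilde\theta(\cdot+\zeta)-\nabla^{\otimes 2}\tilde\theta(\cdot)\to0$, so no time decay is available from classical stationary phase and the decay must be extracted from the fact that, after the ellipticity-driven straightening, $\mathcal K(\zeta,a,\cdot)$ has frequency support (in $s$) concentrated on a single ray, uniformly in $\zeta$; the cancellation of the $\tfrac1{|\zeta|}$ against the $|\zeta|$ coming from $g_\zeta$ is what makes this uniform. Keeping the weight $\langle a\rangle$ tight enough to be absorbed by $\langle y\rangle w\in M$, and tracking precisely how many derivatives of $\tilde\theta$ are consumed by the change of variables, are the remaining delicate bookkeeping points.
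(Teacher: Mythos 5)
Your reduction has a structural flaw that prevents it from giving the uniform-in-$y$ bound the proposition asserts. After Young's inequality in time your estimate reads
\[
\| L_4(V)(\cdot,y,\cdot)\|_{L^2_{t,x}} \le \int_{\R^d} \sup_{\zeta}\| \mathcal K(\zeta,z-y,\cdot)\|_{L^1_s}\, \| \hat V(\cdot,z,\cdot)\|_{L^2_{t,\zeta}}\, d|w|(z),
\]
so with your kernel bound $\| \mathcal K(\zeta,a,\cdot)\|_{L^1_s}\lesssim \langle a\rangle$ you are left with $\sup_y \int \langle z-y\rangle\, d|w|(z)$, which is infinite for every nonzero finite measure $w$ (take $w=\delta_0$: the integral equals $\langle y\rangle$). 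The weight $\langle z-y\rangle$ cannot be absorbed by $\langle y\rangle w\in M$, because the measure lives in $z$, not in $z-y$; the same problem occurs for $L_3$ with $a=-y$. And the weight is not an artifact of sloppy bookkeeping: any route to $\| \mathcal K(\zeta,a,\cdot)\|_{L^1_s}$ through decay of $\widehat{G_{\zeta,a}}$ along the ray $\R_+\zeta$ requires differentiating $G_{\zeta,a}$ along that ray, and each derivative hitting the phase $e^{i\xi(\xi')\cdot a}$ (which, after your straightening, no longer acts as a pure translation of the Fourier transform because $\xi(\xi')$ is nonlinear) costs a factor $|a|$. In fact you need strictly more than one derivative to make the line-restricted Fourier transform integrable ($\|\hat h\|_{L^1(\R)}\lesssim \|h\|_{W^{1,1}(\R)}$ is false — compare the paper's remark on the failure of the principal-value lemma at $q=1$), so your claim that only $\nabla^{\otimes 3}\tilde\theta$ enters is also unjustified.

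The paper avoids both problems by never asking for an $L^1_t$ bound on the kernel. It passes to the Fourier variable $\omega$ dual to $t$, splits $\mathcal F_t(\mathbf 1_{\ge 0})$ into a Dirac part and a principal-value part, and in both pieces performs the one-dimensional analysis in the variable $\sigma$ along $\zeta/|\zeta|$ in such a way that the $y$-dependent phase is harmless: in the Dirac part the phase is evaluated at the single zero $\sigma_0$ of $\Omega$ and has modulus one, and in the principal-value part the lemma for $p.v.\int u(\alpha)e^{i\eta\alpha}/\tilde\Omega(\alpha)\,d\alpha$ is uniform in the modulation $\eta=-y\cdot\zeta/|\zeta|$ because the Hilbert transform commutes with modulations and is bounded on $L^q$, $1<q<\infty$. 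This is exactly the mechanism your Young-in-time reduction discards, and it is where the transversality $\frac{\zeta}{|\zeta|}\cdot\nabla_\xi\Omega>\lambda_*|\zeta|$ (rather than full stationary phase) is used, together with the gain $g(\cdot+\zeta)-g(\cdot)=O(|\zeta|)$, to cancel the $|\zeta|^{-1}$. Your identification of that cancellation and of the low-frequency degeneracy of the Hessian is correct, but the estimate you reduce to is both stronger than needed and, in the form you state it, incompatible with the $L^\infty_y$ conclusion.
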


In order to prove Proposition \ref{pr:continuity-L4}, we need the following lemma.

\begin{lemma}[Estimate for $\frac{e^{i\eta \alpha}}{\alpha}$-like principal values]

Consider a function $\tilde \Omega \in C^2(\mathbb R)$ satisfying $\tilde \Omega '>\lambda $ for some $\lambda>0$ and $\tilde \Omega''\in L^{\infty}$, then for any $q\in (1,\infty)$, for any $u\in W^{1,q}(\mathbb R)$ and $\eta\in \mathbb R$ there holds
\begin{equation} \label{bd:principal-value}
\left| p.v. \int_{\mathbb R} u(\alpha) \frac{e^{i\eta \alpha}}{\tilde \Omega(\alpha)}  d\alpha \right| \leq \frac{C}{\lambda}\Big(1+\frac{\|\tilde \Omega''\|_{L^\infty}}{\lambda}\Big) \| u\|_{W^{1,q}(\mathbb R)}
\end{equation}
for some universal $C$ that is independent of $\tilde \Omega $ and $\eta$ and where $P.v.$ stands for principal value.

\end{lemma}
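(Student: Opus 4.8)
\emph{Proof plan.} Since $\tilde \Omega' >\lambda>0$ on all of $\mathbb R$, the map $\tilde\Omega$ is a $C^2$ increasing diffeomorphism of $\mathbb R$ onto $\mathbb R$; in particular it has a unique zero $\alpha_0$. Near $\alpha_0$ the function $1/\tilde\Omega$ has a simple pole, and away from $\alpha_0$ one has $|1/\tilde\Omega(\alpha)|\le 1/(\lambda|\alpha-\alpha_0|)\in L^{q'}$, so the principal value is well defined (and absolutely convergent at infinity because $u\in L^q$). The plan is to factor out the pole, reduce to a single oscillatory model integral, and control that model integral uniformly in $\eta$. Fix a smooth even cutoff $\chi$ with $\chi\equiv 1$ near $0$ and $\operatorname{supp}\chi\subset[-\tfrac12,\tfrac12]$, and split $\mathrm{p.v.}\int u\,e^{i\eta\alpha}/\tilde\Omega\,d\alpha=I_{\mathrm{sing}}+I_{\mathrm{reg}}$, where $I_{\mathrm{sing}}$ carries the factor $\chi(\,\cdot-\alpha_0)$ and $I_{\mathrm{reg}}$ the factor $1-\chi(\,\cdot-\alpha_0)$. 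The term $I_{\mathrm{reg}}$ has no singularity, and Hölder's inequality together with $|\tilde\Omega(\alpha)|\ge\lambda|\alpha-\alpha_0|$ on $\operatorname{supp}(1-\chi(\,\cdot-\alpha_0))$ gives $|I_{\mathrm{reg}}|\le\|u\|_{L^q}\,\|(1-\chi(\,\cdot-\alpha_0))/\tilde\Omega\|_{L^{q'}}\le \tfrac{C}{\lambda}\|u\|_{W^{1,q}}$.

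For $I_{\mathrm{sing}}$ I would write $1/\tilde\Omega(\alpha)=\psi(\alpha)/(\alpha-\alpha_0)$ with $\psi(\alpha):=(\alpha-\alpha_0)/\tilde\Omega(\alpha)$, extended by $\psi(\alpha_0)=1/\tilde\Omega'(\alpha_0)$. The representation $\psi(\alpha)=\big(\int_0^1\tilde\Omega'(\alpha_0+s(\alpha-\alpha_0))\,ds\big)^{-1}$ shows that $\psi\in C^1$ with $\|\psi\|_{L^\infty}\le 1/\lambda$ and $\|\psi'\|_{L^\infty}\le \|\tilde\Omega''\|_{L^\infty}/(2\lambda^2)$. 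After the translation $\beta=\alpha-\alpha_0$,
\[
I_{\mathrm{sing}}=e^{i\eta\alpha_0}\,\mathrm{p.v.}\!\int_{\mathbb R}\frac{H(\beta)\,e^{i\eta\beta}}{\beta}\,d\beta,\qquad H(\beta):=\chi(\beta)\,\psi(\alpha_0+\beta)\,u(\alpha_0+\beta),
\]
where $H$ is supported in $[-\tfrac12,\tfrac12]$ and, by the product rule and translation invariance of $\|\cdot\|_{W^{1,q}}$, $\|H\|_{W^{1,q}}\le \tfrac{C}{\lambda}\big(1+\tfrac{\|\tilde\Omega''\|_{L^\infty}}{\lambda}\big)\|u\|_{W^{1,q}}$. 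It thus remains to prove the model estimate $\big|\mathrm{p.v.}\int_{\mathbb R}H(\beta)e^{i\eta\beta}/\beta\,d\beta\big|\le C\|H\|_{W^{1,q}(\mathbb R)}$ for every $H$ supported in $[-\tfrac12,\tfrac12]$, with $C$ independent of $\eta$.

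This last bound is the heart of the argument, and I would treat $|\eta|\le 1$ and $|\eta|>1$ separately. In both cases I use that the integration domain is symmetric to subtract $H(0)$ for free (the principal value of $1/\beta$ over a symmetric interval vanishes), and then estimate $|H(\beta)e^{i\eta\beta}-H(0)|\le\|H\|_{L^\infty}|\eta||\beta|+\|H'\|_{L^q}|\beta|^{1-1/q}$, using the one-dimensional embeddings $W^{1,q}\hookrightarrow L^\infty$ and $W^{1,q}\hookrightarrow C^{0,1-1/q}$, valid since $q>1$. For $|\eta|\le1$ this already yields the bound on the whole interval $[-\tfrac12,\tfrac12]$. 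For $|\eta|>1$ I split at the scale $|\beta|=1/|\eta|$: on $|\beta|\le1/|\eta|$ the same computation gives $\le 2\|H\|_{L^\infty}+C\|H'\|_{L^q}$; on $1/|\eta|<|\beta|\le\tfrac12$ there is no singularity and I integrate by parts once against $e^{i\eta\beta}=\tfrac1{i\eta}\partial_\beta e^{i\eta\beta}$, which produces $\partial_\beta(H/\beta)=H'/\beta-H/\beta^2$ together with a boundary term at $|\beta|=1/|\eta|$ of size $|H(\pm1/|\eta|)|\le\|H\|_{L^\infty}$. The $1/|\eta|$ gained from the integration by parts exactly absorbs the $\int_{1/|\eta|}^{1/2}\beta^{-2}\,d\beta\sim|\eta|$ coming from the $H/\beta^2$ term, while the $H'/\beta$ term contributes $\lesssim\|H'\|_{L^q}|\eta|^{-1/q'}\le\|H'\|_{L^q}$. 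Summing all contributions proves the model estimate, and combining it with $|I_{\mathrm{reg}}|\le\tfrac{C}{\lambda}\|u\|_{W^{1,q}}$ and the bound on $\|H\|_{W^{1,q}}$ yields \eqref{bd:principal-value}.

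The main obstacle is exactly the uniformity in $\eta$: a direct $W^{1,q}$ bound on $H\,e^{i\eta\,\cdot}$ loses a factor $|\eta|$, so one cannot simply invoke $L^q$-boundedness of the Hilbert transform (nor a Sobolev embedding of $Hw$ into $L^\infty$). The remedy is the splitting at the natural scale $1/|\eta|$ together with a single integration by parts, which works precisely because the 1D embedding $W^{1,q}\hookrightarrow L^\infty$ makes the otherwise dangerous $\beta^{-2}$ contribution harmless after multiplication by $1/|\eta|$.
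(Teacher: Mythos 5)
Your proof is correct, but it takes a genuinely different route at the crucial step, namely the uniformity in $\eta$. The paper does not factor out $\psi(\alpha)=(\alpha-\alpha_0)/\tilde\Omega(\alpha)$ as you do; instead it splits off the linearization $1/(\tilde\Omega'(\alpha_0)(\alpha-\alpha_0))$, controls the two correction terms (outside and inside a unit neighbourhood of $\alpha_0$) by H\"older and Taylor much as you control $I_{\mathrm{reg}}$ and the size of $\psi$, and is then left with a \emph{global} Hilbert-type integral $\int e^{i\eta\alpha}u(\alpha)/(\alpha-\alpha_0)\,d\alpha$. It handles this by viewing it as the value at $\alpha_0$ of the convolution $v=(e^{i\eta\cdot}/\cdot)\ast u$: since $|v|$ and $|v'|$ are pointwise dominated by Hilbert transforms of the modulated functions $e^{i\eta\cdot}u$ and $e^{i\eta\cdot}u'$, whose $L^q$ norms are those of $u,u'$, the $L^q$-boundedness of the Hilbert transform (this is precisely where $1<q<\infty$ enters) gives $\|v\|_{W^{1,q}}\lesssim\|u\|_{W^{1,q}}$ uniformly in $\eta$, and $W^{1,q}\hookrightarrow L^\infty$ concludes. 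You replace this harmonic-analysis input by an elementary, self-contained estimate of the model kernel $e^{i\eta\beta}/\beta$ against a compactly supported $W^{1,q}$ function: symmetric cancellation of the p.v., the embedding $W^{1,q}\hookrightarrow C^{0,1-1/q}$ near the pole, a split at the scale $1/|\eta|$, and one integration by parts outside. The paper's argument is shorter modulo the Hilbert transform bound and makes the failure at the endpoints $q=1,\infty$ transparent (as its subsequent Remark notes); yours is slightly longer but avoids any nontrivial operator bound and tracks the $\eta$-dependence explicitly. Both give the stated constant $\frac{C}{\lambda}\big(1+\|\tilde\Omega''\|_{L^\infty}/\lambda\big)$.
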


\begin{remark}
Note that the estimate is false at the endpoint cases $q=1$ and $q=\infty$. To see it, it suffices to consider the Hilbert transform $ \frac{e^{i\eta \alpha}}{\tilde \Omega(\alpha)} =\frac{1}{\alpha}$. The Hilbert transform is ill-defined on $L^\infty$ which invalidates the estimate for $q=\infty$. Moreover, if $u\in W^{1,1}$, then $(\frac{1}{\alpha}*u)'\in L^{1,w}$ where $L^{1,w}$ is the weak $L^1$ space. However, there exists unbounded functions $v$ such that $v'\in L^{1,w}$, for example the $\log$ function.
The failure of the estimate in the case $q=1$ is the reason why $g\in W^{3,1}$ is required in the previous proposition.
\end{remark}

\begin{proof}

Up to dividing $\tilde \Omega$ by $\lambda$, we assume $\lambda=1$ without loss of generality. Then as $\tilde \Omega '>1$ there exists a unique zero $\alpha_0$ of $\tilde \Omega$. We decompose, omitting the $p.v.$ notation for simplicity,
\begin{equation}\label{id:principal-value-decomposition}
\int_{\mathbb R} \frac{e^{i\eta \alpha}}{\tilde \Omega(\alpha)} u(\alpha)d\alpha = I +II+III
\end{equation}
with
\begin{align*}
I  &=  \int_{|\alpha-\alpha_0|> 1} \left(\frac{e^{i\eta \alpha}}{\tilde \Omega(\alpha)} - \frac{e^{i\eta \alpha}}{\tilde \Omega'(\alpha_0)(\alpha-\alpha_0)} \right)u(\alpha)d\alpha , \\
 II & = \int_{\mathbb R} \frac{e^{i\eta \alpha}}{\tilde \Omega'(\alpha_0)(\alpha-\alpha_0)} u(\alpha)d\alpha , \\
III & = \int_{|\alpha-\alpha_0|<1} \left(\frac{e^{i\eta \alpha}}{\tilde \Omega(\alpha)} - \frac{e^{i\eta \alpha}}{\tilde \Omega'(\alpha_0)(\alpha-\alpha_0)} \right)u(\alpha) d\alpha  .
\end{align*}
To estimate $I$, note that $\tilde \Omega'(\alpha_0)>1$, and then that $|\tilde \Omega(\alpha)|> |\alpha-\alpha_0|$, so that by the H\"older inequality,
\begin{equation} \label{bd:principal-value-1}
|I|\leq 2 \int_{|\alpha-\alpha_0|\geq 1} \frac{|u(\alpha)|}{|\alpha-\alpha_0|}d\alpha \lesssim \| u\|_{L^q}.
\end{equation}
Next we have $II=\frac{e^{i \eta \alpha_0}}{\tilde \Omega'(\alpha_0)}v(\alpha_0)$ with $v= (\alpha \mapsto \frac{e^{i\eta \alpha}}{\alpha})* u$. Since
$$
|v(\tilde \alpha)|= \left|\int_{\mathbb R} \frac{e^{i\eta (\alpha- \tilde \alpha)}}{\alpha-\tilde \alpha} u(\alpha)d\alpha \right|  \leq \left|\int_{\mathbb R} \frac{1}{\alpha-\tilde \alpha} e^{i\eta \alpha}u(\alpha)d\alpha \right| 
$$
and
$$
|v'(\tilde \alpha)|= \left|\int_{\mathbb R} \frac{e^{i\eta (\alpha- \tilde \alpha)}}{\alpha-\tilde \alpha} u'(\alpha)d\alpha \right|    \leq \left|\int_{\mathbb R} \frac{1}{\alpha-\tilde \alpha} e^{i\eta \alpha}u'(\alpha)d\alpha \right|,
$$
the boundedness of the Hilbert transform on $L^q$ implies that $v\in W^{1,q}$ with $\|v\|_{W^{1,q}}\lesssim \| u\|_{W^{1,q}}$. By the Sobolev embedding one therefore has $v\in L^\infty$ so that
\begin{equation} \label{bd:principal-value-2}
|II|\lesssim \| u\|_{W^{1,q}}.
\end{equation}
Finally, to bound $III$ we note that $2 |\tilde \Omega(\alpha) -\tilde \Omega'(\alpha_0)(\alpha-\alpha_0)|\leq  \| \tilde \Omega''\|_{L^{\infty}} |\alpha-\alpha_0|^2$ by Taylor's formula. Hence if $|\alpha-\alpha_0|< \frac{|\tilde \Omega'(\alpha_0)|}{\| \tilde \Omega ''\|_{L^\infty}}$ (or for all $\alpha\in \mathbb R$ if $\| \tilde \Omega ''\|_{L^\infty}=0$) then we have
$$
\frac{1}{\tilde \Omega(\alpha)}=\frac{1}{\tilde \Omega'(\alpha_0)(\alpha-\alpha_0)}\frac{1}{1+\frac{\tilde \Omega(\alpha) -\tilde \Omega'(\alpha_0)(\alpha-\alpha_0)}{\tilde \Omega'(\alpha_0)(\alpha-\alpha_0)}}=\frac{1}{\tilde \Omega'(\alpha_0)(\alpha-\alpha_0)}+O(\|\tilde \Omega''\|_{L^\infty})
$$
where we used $\tilde \Omega'(\alpha_0)>1$. We recall $|\tilde \Omega (\alpha)|>|\alpha-\alpha_0|$ as $\tilde \Omega'>1$. If $|\alpha-\alpha_0|> \frac{|\tilde \Omega'(\alpha_0)|}{\| \tilde \Omega ''\|_{L^\infty}}$ there then holds $\frac{1}{|\Omega(\alpha)|}< \| \tilde \Omega'' \|_{L^\infty}$. Combining, we get that
\begin{align*}
\frac{e^{i\eta \alpha}}{\tilde \Omega(\alpha)} - \frac{e^{i\eta \alpha}}{\tilde \Omega'(\alpha_0)(\alpha-\alpha_0)} =O(\| \tilde \Omega''\|_{L^\infty})
\end{align*}
for all $|\alpha-\alpha_0|<1$. Thus, by the H\"older inequality one gets
\begin{equation} \label{bd:principal-value-3}
|III | \lesssim \| \tilde \Omega''\|_{L^\infty} \int_{|\alpha-\alpha_0|\leq 1} |u(\alpha)|d\alpha \lesssim \| \tilde \Omega''\|_{L^\infty}  \| u\|_{L^q}.
\end{equation}
Injecting the bounds \eqref{bd:principal-value-1}, \eqref{bd:principal-value-2} and \eqref{bd:principal-value-3} in \eqref{id:principal-value-decomposition} shows the desired result \eqref{bd:principal-value}.

\end{proof}

We can now turn to the proof of Proposition \ref{pr:continuity-L4}.

\begin{proof}[Proof of Proposition \ref{pr:continuity-L4}]

In what follows $\lesssim $ denotes inequalities where the implicit constant solely depend on $\lambda_*$ and $\| \theta \|_{W^{3,\infty}}$. The letter $\omega$ will denote the dual Fourier variable of $t$, and not the probability variable as in the rest of the article. This should create no confusion as no probability variable is involved in the proof. We introduce $U(x,z,t)=w(z)V(x,z,t)$, so that
$$
\widehat{L_4(V)}(\zeta,y,t)= - i\int_0^t \iint (g(\xi+\zeta)-g(\xi))e^{-i(t-\tau)(\theta(\xi+\zeta)-\theta(\xi))}\hat U(\zeta,z,\tau)e^{i\xi(z-y)} dzd\xi d\tau.
$$
We write the above under the form
$$
\widehat{L_4(V)}(\zeta,y,t)=-i(2\pi)^{\frac d2} \int (g(\xi+\zeta)-g(\xi))e^{-i\xi y}\big(\mathcal F_{x,y} U(\zeta,-\xi,\cdot)* (\mathbf{ 1}(\cdot \geq 0) e^{-i(\theta(\xi+\zeta)-\theta(\xi))\cdot })\Big) (t)  d\xi
$$
where we used that $U(t)=0$ for $t<0$. In a similar way, we have
\begin{multline*}
\widehat{L_3(V)}(\zeta,y,t)\\
=i(2\pi)^{\frac d2} \int (g(\xi+\zeta)-g(\xi))e^{-i\xi y}\big(\hat w(\zeta)\mathcal F_{x} V(\zeta,0,\cdot)* (\mathbf{ 1}(\cdot \geq 0) e^{-i(\theta(\xi+\zeta)-\theta(\xi))\cdot })\Big) (t)  d\xi .
\end{multline*}
We introduce
\[
U_4(\zeta,\xi,\omega) = \mathcal F_{x,y,t} U (\zeta,-\xi,\omega), \quad U_3 (\zeta,\xi, \omega) = \hat w (\zeta) \mathcal F_{x,t} V (\zeta, 0, \omega).
\]
Note that $U_3$ does not depend on $\xi$ but it unifies notations.

We recall that
$$
\mathcal F_t(\mathbf{ 1}(\cdot \geq 0) )(\omega)=\frac{1}{2}\sqrt{2\pi}\delta(\omega)+\frac{1}{\sqrt{2\pi}}\frac{1}{i\omega}.
$$
Therefore, applying the Fourier transform in time we have for $k=3,4$,
\begin{equation} \label{id:decomposition-L4}
L_k(V)=(-1)^{k+1}\frac{i(2\pi)^{\frac{d+1}{2}}}{2} L_{k,\delta}(V)+(-1)^{k+1}(2\pi)^{\frac{d-1}{2}}L_{k,p.v}(V)
\end{equation}
where the Dirac part is
$$
\mathcal F_{x,t}L_{k,\delta}(V)(\zeta,y,\omega)= \int (g(\xi+\zeta)-g(\xi))e^{-i\xi y}  U_k(\zeta,\xi,\omega)\delta(\omega+\theta(\xi+\zeta)-\theta(\xi))  d\xi
$$
and the principal value part is
$$
\mathcal F_{x,t}L_{k,p.v.}(V)(\zeta,y,\omega)= \int (g(\xi+\zeta)-g(\xi))e^{-i\xi y}  U_k(\zeta,\xi,\omega)\frac{1}{\omega+\theta(\xi+\zeta)-\theta(\xi)}  d\xi.
$$

\noindent \textbf{Step 1}. \emph{Intermediate bound for the Dirac part $L_{k,\delta}$}. For a fixed nonzero $\zeta\in \mathbb R^d$, we decompose any $\xi \in \mathbb R^d$ in the form $\xi= \xi^{\perp}+\tilde \xi$ where $\tilde \xi \in \mbox{Span}\{\zeta\}$ and $\xi^\perp \in \{\zeta\}^\perp$. We write $\tilde \xi=\sigma \frac{\zeta}{|\zeta|}$. This gives
$$
\mathcal F_{x,t}L_{k,\delta}(V)(\zeta,y,\omega)= \iint (g(\xi+\zeta)-g(\xi))e^{-i\xi y}  U_k(\zeta,\xi,\omega)\delta \left( \Omega (\zeta,\xi^\perp+\sigma \frac{\zeta}{|\zeta|},\omega)\right)  d\xi^\perp d\sigma
$$
where
$$
 \Omega (\zeta,\xi,\omega)=\omega+\theta(\xi+\zeta)-\theta(\xi).
$$
As $\nabla_\xi  \Omega =\nabla \theta(\xi+\zeta)-\nabla \theta(\xi)$ the uniform ellipticity assumption \eqref{id:uniform-ellipticity-theta} implies that
\begin{equation}\label{bd:Omega-derivative}
\frac{\zeta}{|\zeta|}.\nabla_{\xi} \Omega(\zeta,\xi,\omega) > \lambda_* |\zeta|
\end{equation}
for all $\zeta$, $\xi$ and $\omega$. Therefore, for each fixed $\zeta$, $\xi^\perp$ and $\omega$, $\Omega$ admits a unique zero of the form $\xi_0=\xi^\perp+\tilde \xi_0$ with $\tilde \xi_0=\sigma_0(\zeta,\xi^\perp,\omega)\frac{\zeta}{|\zeta|}$. Integrating along the $\sigma$ variables then gives
$$
\mathcal F_{x,t}L_{k,\delta}(V)(\zeta,y,\omega)= \int \frac{1}{\frac{\zeta}{|\zeta|}.\nabla_\xi \Omega(\zeta,\xi_0,\omega) }(g(\xi_0+\zeta)-g(\xi_0))e^{-i\xi_0 y} U_k(\zeta,\xi_0,\omega) d\xi^\perp.
$$
We introduce 
$$
g_\zeta(\xi)=\frac{g(\xi+\zeta)-g(\xi)}{|\zeta|}
$$
and
$$
\tilde U_{1,k} (\zeta,\xi , \omega)= \frac{|\zeta|}{ \frac{\zeta }{|\zeta|}.\nabla_\xi \Omega(\zeta,\xi,\omega) }g_\zeta(\xi)  U_k(\zeta,\xi,\omega) 
$$
and the above becomes
$$
\mathcal F_{x,t}L_{k,\delta}(V)(\zeta,y,\omega) = \int e^{-i\xi^\perp y-i\sigma_0\frac{\zeta}{|\zeta|}y}\tilde U_{1,k}(\zeta,\xi^\perp+\sigma_0\frac{\zeta}{|\zeta|},\omega)  d \xi^\perp .
$$
The integrand can be bounded using the one dimensional Sobolev embedding $W^{1,1}(\mathbb R)\rightarrow L^\infty(\mathbb R)$:
$$
|\tilde U_{1,k}(\zeta,\xi^\perp+\sigma_0\frac{\zeta}{|\zeta|},\omega)|\lesssim \| \tilde U_{1,k}(\zeta,\xi^\perp+\tilde \xi,\omega)\|_{L^1_{\tilde \xi}}+\| \nabla_\xi \tilde U_{1,k}(\zeta,\xi^\perp+\tilde \xi,\omega)\|_{L^1_{\tilde \xi}}.
$$
After integration along the remaining $\xi^\perp$ variable, this leads to the intermediate bound:
\begin{equation} \label{bd:intermediate-L4-delta}
|\mathcal F_{x,t}L_{k,\delta}(V)(\zeta,y,\omega)|  \lesssim  \| \tilde U_{1,k}(\zeta,\cdot,\omega)\|_{L^1_{ \xi}} +\| \nabla_\xi \tilde U_{1,k}(\zeta,\cdot,\omega)\|_{L^1_{\xi}}
\end{equation}
for any $\zeta$, $y$ and $\omega$. Note that $\tilde U_{1,3}$ depends on $\xi$ through $g$. 

\medskip

\noindent \textbf{Step 2}. \emph{Intermediate bound for the principal value part $L_{k,p.v.}$}. The reasoning is very similar to the previous one concerning $L_{k,\delta}$. For fixed nonzero $\zeta$ and $\omega$ we change again variables and write $\xi=\xi^\perp+\tilde \xi$ with $\tilde \xi= \sigma \frac{ \zeta}{|\zeta|} $ and $\xi^\perp \in \{\zeta\}^\perp$, so that
\begin{align*}
\mathcal F_{x,t}L_{k,p.v.}(V)(\zeta,y,\omega) &=  \iint g_\zeta(\xi)e^{-i\xi y} U_k(\zeta,\xi,\omega)\frac{|\zeta|}{\Omega(\zeta ,\xi^\perp+\sigma \frac{\zeta}{|\zeta|},\omega)}  d\xi^\perp d\sigma. 
\end{align*}
We introduce
$$
\tilde U_{2,k} (\zeta,\xi , \omega)= g_\zeta(\xi)  U_k(\zeta,\xi,\omega) 
$$
and the above becomes
\begin{align*}
\mathcal F_{x,t}L_{k,p.v.}(V)(\zeta,y,\omega) &=  \iint \tilde U_{2,k}(\zeta,\xi^\perp+\sigma\frac{\zeta}{|\zeta|},\omega)e^{-i\xi^\perp y-i\sigma y\frac{\zeta}{|\zeta|}} \frac{|\zeta|}{\Omega(\zeta,\xi^\perp+\sigma \frac{\zeta}{|\zeta|},\omega)}  d\xi^\perp d\sigma. 
\end{align*}
We recall that $\Omega$ satisfies \eqref{bd:Omega-derivative}. Furthermore, as $\nabla^{\otimes 3} \Tilde \theta\in L^{\infty}$, $\Omega$ satisfies by the mean value theorem that
\begin{align} \nonumber
\Big \lvert \nabla_\xi ( \frac{\zeta }{|\zeta|} \cdot \nabla_\xi \Omega(\zeta,\xi,\omega)) \Big \rvert & \lesssim \Big \lvert \nabla_\xi ( \frac{\zeta }{|\zeta|}\cdot \nabla_\xi \theta(\xi+\zeta))-\nabla_\xi ( \frac{\zeta }{|\zeta|}\cdot \nabla_\xi \theta(\xi)) \Big \rvert \\
 \label{bd:Omega-derivative2} &\leq C(\| \nabla^{\otimes 3}\Tilde \theta \|_{L^{\infty}},\lambda_*) |\zeta|.
\end{align}
One can thus integrate along the $\sigma$ variable and apply the estimate \eqref{bd:principal-value} with $\lambda=\lambda_*|\zeta|$ to bound
\begin{multline*}
 \left| \int \tilde U_{2,k}(\zeta,\xi^\perp+\sigma\frac{\zeta}{|\zeta|},\omega)e^{-i\xi^\perp y-i\sigma y\frac{\zeta}{|\zeta|}} \frac{|\zeta|}{\Omega(\zeta,\xi^\perp+\sigma \frac{\zeta}{|\zeta|},\omega)} d\sigma \right| \\
 \leq C(\| \nabla^{\otimes 3}\Tilde \theta \|_{L^{\infty}},\lambda_*) \| \tilde U_{2,k}(\zeta,\xi^\perp+\tilde \xi,\omega)\|_{L^2_{\tilde \xi}}+\| \nabla_\xi \tilde U_{2,k}(\zeta,\xi^\perp+\tilde \xi,\omega)\|_{L^{2}_{\tilde \xi}}
\end{multline*}
Integrating along the remaining $\xi^\perp$ variable, this leads to the intermediate bound
\begin{equation} \label{bd:intermediate-L4-pv}
|\mathcal F_{x,t}L_{k,p.v.}(V)(\zeta,y,\omega)|  \lesssim  \| \tilde U_{2,k}(\zeta,\cdot,\omega)\|_{L^1_{ \xi^\perp}L^2_{\tilde \xi}} +\| \nabla_\xi \tilde U_{2,k}(\zeta,\cdot,\omega)\|_{L^1_{\xi^\perp}L^2_{\tilde \xi}}
\end{equation}
for any $\zeta$, $y$ and $\omega$.

\medskip

\noindent \textbf{Step 3}. \emph{Bounds for $\tilde U_{1,k}$ and $\tilde U_{2,k}$}. By \eqref{bd:Omega-derivative} one has the equivalence
\[
|\tilde U_{1,k}(\zeta,\xi,\omega) |\approx |\tilde U_{2,k}(\zeta,\xi,\omega)|= |g_\zeta(\xi)  U_k(\zeta,\xi,\omega) |.
\]

We recall that $\omega$ denotes the dual Fourier variable of $t$. We have therefore by the H\"older inequalities
\begin{equation}\label{bd:tildeU1}
\| \tilde U_{1,k}(\zeta,\cdot,\cdot)\|_{L^1_{\xi}L^2_\omega}  \lesssim \| g_\zeta \|_{L^1_\xi} \|  U_k(\zeta,\cdot,\cdot) \|_{L^\infty_\xi L^2_\omega} 
\end{equation}
and similarly, for $q=1,2$,
\begin{align}
\nonumber 
\| \tilde U_{2,k}(\zeta,\cdot,\cdot)\|_{L^1_{\xi^\perp}L^q_{\tilde \xi}L^2_\omega} &  \lesssim \| g_\zeta \|_{L^1_{\xi^\perp}L^q_{\tilde \xi}} \| U_k(\zeta,\cdot,\cdot) \|_{L^\infty_\xi L^2_\omega}
\\
\label{bd:tildeU2} & \lesssim \| g_\zeta \|_{W^{1,1}_{\xi}}  \|  U_k(\zeta,\cdot,\cdot) \|_{L^\infty_\xi L^2_\omega} 
\end{align}
where we used the one-dimensional Sobolev embedding $W^{1,1}_{\tilde \xi}\rightarrow L^q_{\tilde \xi}$. Next, we differentiate
\[
\nabla_\xi \tilde U_{2,k}(\zeta,\xi , \omega)  = \nabla g_\zeta(\xi)  U_k(\zeta,\xi,\omega) + g_\zeta(\xi) \nabla_\xi  U_k(\zeta,\xi,\omega)  
\]
and estimate similarly that for $q=1,2$,
\begin{align}
\nonumber \| \nabla_\xi \tilde U_{2,k}(\zeta,\xi , \omega)  \|_{L^1_{\xi^\perp}L^q_{\tilde \xi}L^2_\omega} &  \lesssim \| \nabla g_\zeta \|_{L^1_{\xi^\perp}L^q_{\tilde \xi}} \| U_k(\zeta,\cdot,\cdot) \|_{L^\infty_\xi L^2_\omega}+ \| g_\zeta \|_{L^1_{\xi^\perp}L^q_{\tilde \xi}} \| \nabla_\xi U_k(\zeta,\cdot,\cdot) \|_{L^\infty_\xi L^2_\omega} \\
\label{bd:tildeU2-nablaxi}&\lesssim \| g_\zeta\|_{W^{2,1}} (\|  U_k(\zeta,\cdot,\cdot) \|_{L^\infty_\xi L^2_\omega}+\| \nabla_\xi U_k(\zeta,\cdot,\cdot) \|_{L^\infty_\xi L^2_\omega}).
\end{align}
Finally, we decompose
\begin{align*}
\left|\nabla_\xi \tilde U_{1,k}(\zeta,\xi , \omega)\right| & = \left|\frac{|\zeta|}{ \frac{\zeta }{|\zeta|}.\nabla_\xi \Omega(\zeta,\xi,\omega) } \nabla_\xi \tilde U_{2,k}+\nabla_\xi\left( \frac{|\zeta|}{ \frac{\zeta }{|\zeta|}.\nabla_\xi \Omega(\zeta,\xi,\omega) }\right) \tilde U_{2,k}\right| \\
&\lesssim C(\| \nabla^{\otimes 3}\Tilde \theta \|_{L^{\infty}},\lambda_*)(| \nabla_\xi \tilde U_{2,k}|+|\tilde U_{2,k}|)
\end{align*}
where we used \eqref{bd:Omega-derivative} and \eqref{bd:Omega-derivative2} to obtain that $|\frac{|\zeta|}{ \frac{\zeta }{|\zeta|}.\nabla_\xi \Omega(\zeta,\cdot,\cdot) }|+|\nabla_\xi (\frac{|\zeta|}{ \frac{\zeta }{|\zeta|}.\nabla_\xi \Omega(\zeta,\cdot,\cdot) })|\lesssim 1$. Using \eqref{bd:tildeU2} and \eqref{bd:tildeU2-nablaxi} with $q=1$ then shows
\begin{align} \label{bd:tildeU1-nablaxi}
\| \nabla_\xi \tilde U_{1,k}(\zeta,\xi , \omega)  \|_{L^1_{\xi}L^2_\omega} \lesssim C(\| \nabla^{\otimes 3}\Tilde \theta \|_{L^{\infty}},\lambda_*)\| g_\zeta\|_{W^{2,1}} (\|  U_k(\zeta,\cdot,\cdot) \|_{L^\infty_\xi L^2_\omega}+\| \nabla_\xi U_k(\zeta,\cdot,\cdot) \|_{L^\infty_\xi L^2_\omega}).
\end{align}

\noindent \textbf{Step 4}. \emph{Final bound}. Pick $y_0\in \mathbb R^d$. For any $\zeta \in \mathbb R^d$ and $\omega \in \mathbb R$, injecting the previous intermediate estimates \eqref{bd:intermediate-L4-delta} and \eqref{bd:intermediate-L4-pv} in \eqref{id:decomposition-L4} yields 
\begin{align*}
|\mathcal F_{x,t}L_{k}(V)(\zeta,y_0,\omega)|  \lesssim \sum_{j=0}^1 \| \nabla_\xi^j \tilde U_{1,k}(\zeta,\cdot,\omega)\|_{L^1_{ \xi}} + \| \nabla_\xi^j \tilde U_{2,k}(\zeta,\cdot,\omega)\|_{L^1_{ \xi^\perp}L^2_{\tilde \xi}} .
\end{align*}
Hence, for any fixed $\zeta \in \mathbb R^d$, applying the Parseval and Minkowski inequalities gives
\begin{align*}
\| \mathcal F_{x}L_{k}(V)(\zeta,y_0,\cdot) \|_{L^2_t} & = \| \mathcal F_{x,t}L_{k}(V)(\zeta,y_0,\cdot) \|_{L^2_\omega} \\
&\lesssim \sum_{j=0}^1 \| \nabla_\xi^j \tilde U_{1,k}(\zeta,\cdot,\omega)\|_{L^2_\omega L^1_{ \xi}} + \| \nabla_\xi^j \tilde U_{2,k}(\zeta,\cdot,\omega)\|_{L^2_\omega L^1_{ \xi^\perp}L^2_{\tilde \xi}} \\
&\lesssim \sum_{j=0}^1 \| \nabla_\xi^j \tilde U_{1,k}(\zeta,\cdot,\omega)\|_{ L^1_{ \xi} L^2_\omega } + \| \nabla_\xi^j \tilde U_{2,k}(\zeta,\cdot,\omega)\|_{L^1_{ \xi^\perp}L^2_{\tilde \xi}L^2_\omega } . 
\end{align*}
Using \eqref{bd:tildeU1}, \eqref{bd:tildeU2}, \eqref{bd:tildeU1-nablaxi} and \eqref{bd:tildeU2-nablaxi} to bound the right-hand side above we get
\begin{equation} \label{bd:L4-L2t}
\| \mathcal F_{x}L_{k}(V)(\zeta,y_0,\cdot) \|_{L^2_t} \lesssim C(\| \nabla^{\otimes 3}\Tilde \theta \|_{L^{\infty}},\lambda_*)\|  g_\zeta \|_{W^{2,1}_\xi} (\|  U_k(\zeta,\cdot,\cdot) \|_{L^\infty_\xi L^2_\omega}+\| \nabla_\xi U_k(\zeta,\cdot,\cdot) \|_{L^\infty_\xi L^2_\omega}).
\end{equation}
We now turn to bounding $g_\zeta $. We have for $j=0,1,2$ 
$$
\nabla^j g_\zeta(\xi)=\frac{\nabla^j g(\xi+\zeta)-\nabla^j g(\xi)}{|\zeta|} = \int_0^{1} \nabla^{j+1} g(\xi + t\zeta)\cdot \frac{\zeta}{|\zeta|}\ dt
$$
so that by Minkowski,
\begin{equation} \label{bd:g-zeta}
\| \nabla^j g_\zeta \|_{L^1_{\xi}}=\frac{1}{|\zeta|} \| \nabla^j g(\cdot+\zeta)- \nabla^j g(\cdot)\|_{L^1} \leq \| \nabla g\|_{\dot W^{j,1}}
\end{equation}
for any $\zeta \in \mathbb R^d$. Injecting \eqref{bd:g-zeta} in \eqref{bd:L4-L2t}, one obtains
$$
\| \mathcal F_x L_{k} (V)(\zeta,y_0,\cdot)\|_{L^2_{t}}\lesssim C(\| \nabla^{\otimes 3}\Tilde \theta \|_{L^{\infty}},\lambda_*) \| \nabla g \|_{W^{2,1}} (\|  U_k(\zeta,\cdot,\cdot) \|_{L^\infty_\xi L^2_\omega}+\| \nabla_\xi U_k(\zeta,\cdot,\cdot) \|_{L^\infty_\xi L^2_\omega}).
$$

For $k=4$, we note that
\begin{align*}
U_4(\zeta,\xi, \omega) & = \mathcal F_{x,y,t}(w(y) V(x,y,t))(\zeta,-\xi,\omega), \\
\nabla_\xi U_4(\zeta,\xi, \omega) & = \mathcal F_{x,y,t}(iyw(y) V(x,y,t))(\zeta,-\xi,\omega).
\end{align*}
We deduce 
\[
\|U_4\|_{L^2_\zeta L^\infty_\xi L^2_\omega} \leq  \|\mathcal F_{t,x}V(\zeta,y,\omega)\|_{L^2_\zeta L^1_{|w(y)|dy} L^2_\omega}
\]
and thus by Minkowski and Parseval inequality
\[
\|U_4\|_{L^2_\zeta L^\infty_\xi L^2_\omega} \leq  \|V\|_{ L^1_{|w(y)|dy} L^2_{t,x}}.
\]
For similar reasons
\[
\|\nabla_\xi U_4\|_{L^2_\zeta L^\infty_\xi L^2_\omega} \leq  \|V\|_{ L^1_{|y w(y)|dy} L^2_{t,x}}.
\]
It remains to use that $\an y w$ is a finite measure to get
\[
\|\nabla_\xi U_4\|_{L^2_\zeta L^\infty_\xi L^2_\omega} + \| U_4\|_{L^2_\zeta L^\infty_\xi L^2_\omega}\lesssim_w \|V\|_{L^\infty_y L^2_{t,x}}.
\]

For $k=3$, we note that
\[
U_3(\zeta,\xi, \omega) = \hat w(\zeta) \mathcal F_{x,t}( V(x,0,t))(\zeta,\omega), \quad \nabla_\xi U_3(\zeta,\xi, \omega) = 0.
\]
We deduce 
\[
\|U_3\|_{L^2_\zeta L^\infty_\xi L^2_\omega} \leq  \|\hat w\|_{L^\infty} \|V(y=0)\|_{L^2_{t,x}}
\]
which suffices to conclude.

\medskip

\noindent \textbf{Step 5}. \emph{Preservation of continuity}. Take $y_0,y \in \R^d$. We have 
\begin{multline*}
\mathcal F_{t,x} L_{k,\delta}(V)(\zeta,y_0 + y, \omega)- \mathcal F_{t,x} L_{k,\delta}(V)(\zeta,y_0, \omega)\\
= \int \Big(e^{-i \xi^\perp y- i\sigma_0 \frac{\zeta}{|\zeta|}y} -1\Big)e^{-i \xi^\perp y_0- i\sigma_0 \frac{\zeta}{|\zeta|}y_0} \tilde U_{1,k}(\zeta,\xi^\perp+\sigma_0 \frac{\zeta}{|\zeta|},\omega) d\xi^\perp.
\end{multline*}
We deduce that
\begin{multline*}
\|L_{k,\delta}(V)(\cdot,y_0+y,\cdot)-L_{k,\delta}(V)(\cdot,y_0,\cdot)\|_{L^2_{t,x}} \\
\lesssim \|(e^{i\xi y}-1)\tilde U_{1,k}\|_{L^2_{\zeta,\omega},L^1_\xi}+\|(e^{i\xi y}-1)\nabla _\xi \tilde U_{1,k}\|_{L^2_{\zeta,\omega},L^1_\xi} + |y|\|\tilde U_{1,k}\|_{L^2_{\zeta,\omega},L^1_\xi}.
\end{multline*}
Since $U_{1,k}$ and $\nabla_\xi U_{1,k}$ belong to $L^2_{\zeta,\omega},L^1_\xi$ we get by the dominated convergence Theorem that
\[
\|L_{k,\delta}(V)(\cdot,y_0+y,\cdot)-L_{k,\delta}(V)(\cdot,y_0,\cdot)\|_{L^2_{t,x}} \rightarrow 0
\]
as $y$ goes to $0$.

A similar reasoning yields the continuity of $L_{k,p.v.}$ in $y$. 

\end{proof}

\begin{proposition}\label{prop:invl3-l4} The linear operators $L_3$ and $L_4$ are continuous from $E_V$ to $E_V$. Moreover, under the smallness assumption
$$
\| \la y \ra w \|_{L^1} \| \nabla g \|_{W^{2,1}}\leq C(\| \nabla^{\otimes 3}\Tilde \theta \|_{L^{\infty}},\lambda_*),
$$
the linear operator $1-L_3-L_4$ is invertible on $E_V$.
\end{proposition}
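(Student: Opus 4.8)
The plan is to control $L_3(V)$ and $L_4(V)$ separately in each of the two seminorms making up $\|\cdot\|_V$, and then to invert $1-L_3-L_4$ by a Neumann series tailored to the resulting asymmetric structure. The feature that drives everything, read off from \eqref{id:formula-L3} and \eqref{id:computation-L4V}, is that $\widehat{L_k(V)}(\zeta,y,t)$ depends on $V$ only through $\widehat V(\zeta,\cdot,\cdot)$: the operators $L_3,L_4$ are diagonal in the $x$-frequency $\zeta$. Abbreviate $\|V\|_b:=\|V\|_{\mathcal C_y L^2_t B_2^{-1/2,s_c}}\le\|V\|_V$. For the $B_2^{-1/2,s_c}$-seminorm I would rerun the proof of Proposition~\ref{pr:continuity-L4}, keeping the $\zeta$-pointwise estimate \eqref{bd:L4-L2t} but weighting by the Fourier multiplier realising the $B_2^{-1/2,s_c}$-norm in $\zeta$. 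Since \eqref{bd:L4-L2t} is pointwise in $\zeta$ with $\zeta$-uniform constants and $\|g_\zeta\|_{W^{2,1}}\le\|\nabla g\|_{W^{2,1}}$ uniformly by \eqref{bd:g-zeta}, taking the $L^2_\zeta$-norm and commuting the $|w|(dy)$-averaging built into $U_k$ past the $\zeta$-weight by Minkowski yields, uniformly in $y_0$,
\[
\|L_k(V)(\cdot,y_0,\cdot)\|_{L^2_t B_2^{-1/2,s_c}}\le C_\theta\,\|\an{y}w\|_{L^1}\,\|\nabla g\|_{W^{2,1}}\,\|V\|_b,\qquad k=3,4,
\]
with $C_\theta$ as in Proposition~\ref{pr:continuity-L4}; the $y_0$-continuity comes from Step~5 of that proof with the weight in place. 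Crucially, this carries the \emph{small} constant $C_{\theta,w,g}$, and its right-hand side involves $V$ only through $\|V\|_b$.

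For the $L^{(d+2)/2}_{t,x}$-seminorm I would use $L_3(V)=\E[\overline{Y(x+y)}L_1(V)(x)]+\E[\overline{L_1(V)(x+y)}Y(x)]$ and the analogue with $L_2,L_4$. As $\E[|Y|^2]=(2\pi)^{d/2}\hat g(0)$ is constant, Cauchy--Schwarz in $\Omega$ bounds, uniformly in $y$, $\|L_3(V)(\cdot,y,\cdot)\|_{L^{(d+2)/2}_{t,x}}\lesssim_g\|L_1(V)\|_{L^{(d+2)/2}_{t,x}L^2_\omega}$ and $\|L_4(V)(\cdot,y,\cdot)\|_{L^{(d+2)/2}_{t,x}}\lesssim_g\|L_2(V)\|_{L^{(d+2)/2}_{t,x}L^2_\omega}$. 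Now $L_1(V)=-iL_1^{\infty,0}(w*V(\cdot,0,\cdot))$, and the translation trick of Corollary~\ref{corollary:l2} gives $\|L_2(V)\|_{L^{(d+2)/2}_{t,x}L^2_\omega}\le\int|w|(dz)\,\|L_1^{\infty,0}(T_{-z}V(\cdot,z,\cdot))\|_{L^{(d+2)/2}_{t,x}L^2_\omega}$, so it suffices to estimate $L_1^{\infty,0}$. I would apply Corollary~\ref{cor:CKtoMP} with $(p_1,q_1,\sigma_1,\sigma)=\big(\tfrac{d+2}{2},\tfrac{d+2}{2},s_c-1,0\big)$, checking $\tfrac2{p_1}+\tfrac d{q_1}=2=\tfrac d2-\sigma_1$, that $\sigma_1=s_c-1\ge0$ exactly because $d\ge4$, that $p_1>2$ and $q_1\ge2$, and that $g\in W^{2,1}$ (from $g\in W^{3,1}$) and $\tilde\theta\in W^{4,1}$; this gives $\|L_1^{\infty,0}(U)\|_{L^{(d+2)/2}_{t,x}L^2_\omega}\lesssim C_\theta\,\|g\|_{W^{2,1}}\,\|U\|_{L^2_t B_2^{s_c-3/2,\,s_c-3/2}}$. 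Convolution with the finite measure $w$ preserves $B_2^{s_c-3/2,s_c-3/2}$, and since $d\ge4$ forces $s_c-\tfrac32\ge-\tfrac12$ one has the continuous embedding $B_2^{-1/2,s_c}\hookrightarrow B_2^{s_c-3/2,s_c-3/2}$, so $\|w*V(\cdot,0,\cdot)\|_{L^2_t B_2^{s_c-3/2,s_c-3/2}}\lesssim\|w\|_M\|V\|_b$, and likewise for $T_{-z}V(\cdot,z,\cdot)$. Altogether $\|L_k(V)\|_{\mathcal C_y L^{(d+2)/2}_{t,x}}\le B\,\|V\|_b$ for $k=3,4$, with a constant $B\lesssim C_\theta\,\|w\|_M\,\sqrt{\hat g(0)}\,\|g\|_{W^{2,1}}$ that is \emph{not} required to be small, and again depending on $V$ only through $\|V\|_b$; the $y$-continuity follows from the dominated-convergence arguments of Proposition~\ref{prop:magicProposition} and Step~5 of Proposition~\ref{pr:continuity-L4}. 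Combined with the previous paragraph, this proves $L_3,L_4:E_V\to E_V$ continuous.

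For the invertibility, set $\mathcal R=L_3+L_4$. The two paragraphs above give $\varepsilon_0=C_\theta\|\an{y}w\|_{L^1}\|\nabla g\|_{W^{2,1}}$ and the possibly large $B$ with
\[
\|\mathcal R V\|_b\le\varepsilon_0\,\|V\|_b,\qquad \|\mathcal R V\|_V\le(\varepsilon_0+B)\,\|V\|_b\le(\varepsilon_0+B)\,\|V\|_V.
\]
Feeding the first bound repeatedly into the second yields $\|\mathcal R^n V\|_V\le(\varepsilon_0+B)\,\varepsilon_0^{\,n-1}\,\|V\|_V$ for every $n\ge1$, so the series $\sum_{n\ge0}\mathcal R^n$ converges in operator norm on $E_V$ as soon as $\varepsilon_0<1$, i.e.\ as soon as $\|\an{y}w\|_{L^1}\|\nabla g\|_{W^{2,1}}\le C(\|\nabla^{\otimes3}\tilde\theta\|_{L^\infty},\lambda_*)$; its sum is the inverse of $1-L_3-L_4$ on $E_V$.

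The delicate point is precisely this last step: $\|L_3+L_4\|_{E_V\to E_V}$ itself need not be small, because the $L^{(d+2)/2}_{t,x}$ bound carries the factor $\|g\|_{W^{2,1}}$, which contains the density $\|g\|_{L^1}$ that the statement allows to be large, so a direct Neumann series is unavailable. What rescues the argument is the asymmetry uncovered in the first two paragraphs — $L_3+L_4$ lands in $E_V$ with a norm governed \emph{only} by the $B_2^{-1/2,s_c}$-seminorm of the input, a seminorm on which $L_3+L_4$ is a genuine contraction under the smallness hypothesis. The remaining non-routine ingredients are the exponent bookkeeping in the $L^{(d+2)/2}_{t,x}$ estimate — in particular the identification $\sigma_1=s_c-1$, where the hypothesis $d\ge4$ enters, together with the low-frequency embedding $B_2^{-1/2}\hookrightarrow B_2^{s_c-3/2}$ — and the verification of $y$-continuity for the $L^{(d+2)/2}_{t,x}$-valued operators.
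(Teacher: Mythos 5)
Your proof is correct and follows essentially the same strategy as the paper's: a small operator bound on the $\mathcal C_yL^2_tB_2^{-1/2,s_c}$ seminorm inherited from Proposition \ref{pr:continuity-L4}, a possibly large bound on the $\mathcal C_yL^{(d+2)/2}_{t,x}$ seminorm that depends on the input only through its Besov seminorm, and inversion exploiting this triangular structure. The only cosmetic differences are that the paper obtains the $L^{(d+2)/2}_{t,x}$ bound by reusing Corollaries \ref{corollary:l1}--\ref{corollary:l2} (so $L_1,L_2$ land in $E_Z$) and interpolating Lebesgue exponents, where you apply Corollary \ref{cor:CKtoMP} directly with the tuple $\bigl(\tfrac{d+2}{2},\tfrac{d+2}{2},s_c-1,0\bigr)$ and the embedding $B_2^{-1/2,s_c}\hookrightarrow B_2^{s_c-3/2,s_c-3/2}$ (both routes using $d\ge 4$ at the same point), and that the paper phrases the inversion via the resolvent identity $(1-L_3-L_4)^{-1}=1+(L_3+L_4)(1-L_3-L_4)^{-1}$ rather than your explicit weighted Neumann series --- the same mechanism.
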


\begin{proof} Because $L_3$ and $L_4$ are bounded Fourier multipliers in $x$ they commute with the laplacian and thus $L_3$ and $L_4$ are continuous from $\mathcal C_y L^2_t \dot H^{-1/2}_x\cap L^\infty_y L^2_t \dot H^{-1/2}_x$ to itself and from $\mathcal C_y L^2_t \dot H^{s_c}_x\cap L^\infty_y L^2_t \dot H^{s_c}_x$ to itself. We deduce that $L_3$ and $L_4$ are continuous from $\mathcal C_y L^2_t B_2^{-1/2, s_x}\cap L^\infty_y L^2_t B_2^{-1/2,s_c}$ to itself. Besides their operator norms go to 0 as the $W^{2,1}$ norm of $\nabla g$ goes to $0$ which ensures the invertibility of $1-L_3-L_4$ on $\mathcal C_y L^2_t B_2^{-1/2, s_x}\cap L^\infty_y L^2_t B_2^{-1/2,s_c}$ as long as $g$ is small enough. From this, we also deduce that $L_3$ and $L_4$ are continuous from $E_V$ to $L^\infty_y L^2_{t,x}$.

From Corollaries \ref{corollary:l1} \ref{corollary:l2}, we know that $L_1$ and $L_2$ are continuous from $\mathcal C_y L^2_t B_2^{-1/2, s_x}\cap L^\infty_y L^2_t B_2^{-1/2,s_c}$ to $E_Z$, we get in particular that they are continuous from $\mathcal C_y L^2_t B_2^{-1/2, s_x}\cap L^\infty_y L^2_t B_2^{-1/2,s_c}$ to $L^{d+2}_{t,x} L^2_\omega$. Because for $k=1,2$,
\begin{align*}
|L_{k+2} (V) (x,y,t)| & = |\E(\bar Y(x+y,t) L_1(V)(x,t)) + \E(\overline{L_1(V)} (x+y,t) Y(x,t))| \\
& \leq \Big(\int g\Big) (\|L_k(V)(x,t)\|_{L^2_\omega} + \|L_k(V)(x+y,t)\|_{L^2_\omega})
\end{align*}
we deduce that
\[
\|L_{k+2}(V)\|_{L^{d+2}_{t,x}} \lesssim_g \|L_k(V)\|_{E_Z} \lesssim_g \|V\|_{ L^\infty_y L^2_t B_2^{-1/2,s_c}}.
\]
Because $\frac{d+2}{2} \in [2,d+2]$, we deduce that for $k=1,2$,
\[
\|L_{k+2}(V)\|_{L^{(d+2)/2}_{t,x}}  \lesssim_g \|V\|_{ L^\infty_y L^2_t B_2^{-1/2,s_c}}.
\]

This ensures that $L_3$ and $L_4$ are continuous from $\mathcal C_y L^2_t B_2^{-1/2, s_x}\cap L^\infty_y L^2_t B_2^{-1/2,s_c}$ to $E_V$. Finally, noticing that 
\[
(1-L_3-L_4)^{-1} = 1+(L_3+L_4)(1-L_3-L_4)^{-1}
\]
We get that the restriction of $(1-L_3-L_4)^{-1}$ to $E_V$ is continuous from $E_V$ to $E_V$ which makes $1-L_3-L_4$ invertible on $E_V$.
\end{proof}

\section{Bilinear estimates}\label{sec:bilinear}

\subsection{Quadratic terms in the perturbation}

\begin{proposition}\label{prop:estQ12} There exists $C$ such that for all $Z,V \in E_Z \times E_V$, and $k=1,2$, we have 
\[
\|Q_k(Z,V)\|_{Z} \leq C \| \la y\ra w\|_{M} \|Z\|_Z \|V\|_V.
\]
\end{proposition}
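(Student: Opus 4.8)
The plan is the standard Strichartz-plus-bilinear argument for a cubic-type term: reduce to a Duhamel forcing estimate, apply Corollary~\ref{cor:strichartz}, and control the resulting bilinear quantity by Hölder in space--time together with a Littlewood--Paley decomposition that handles the fractional derivative $\la\nabla\ra^{s_c}$. Throughout, the probability variable is inert, since $S(t)$, $w\ast$ and all Fourier multipliers act only on $x$: every estimate below holds for $L^2(\Omega)$-valued functions, and in a product $hZ$ with $h$ deterministic one has $\|hZ\|_{L^r_xL^2_\omega}\le\|h\|_{L^{r_1}_x}\|Z\|_{L^{r_2}_xL^2_\omega}$ whenever $\tfrac1r=\tfrac1{r_1}+\tfrac1{r_2}$.

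\emph{Step 1: reduction to a product estimate.} Since $S(t)=e^{-it\theta(D)}$, $\la\nabla\ra^{s_c}$ and the translations $T_z$ are Fourier multipliers, they commute with each other and with the Duhamel operator $\int_0^tS(t-\tau)\,\cdot\,d\tau$, and every norm in $\|\cdot\|_Z$ is translation invariant. For $k=1$ the forcing is $(w\ast V(\cdot,0))\,Z$; as $w$ is a finite measure, $\|(w\ast V(\cdot,0))\,u\|\le\|w\|_M\|V(\cdot,0)\,u\|$ for any translation invariant norm, while $\|V(\cdot,0)\|_{L^{(d+2)/2}(\R\times\R^d)}+\|V(\cdot,0)\|_{L^2_tB_2^{-1/2,s_c}}\le\|V\|_V$ by evaluating the $\mathcal C(\R^d,\cdot)$ components at $y=0$. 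For $k=2$ the forcing is $\int w(z)\,V(x,z)\,Z(x+z)\,dz=\int w(z)\,\big(V(\cdot,z)\,T_zZ\big)(x)\,dz$, so by Minkowski and the commutations just noted,
\[
\|Q_2(Z,V)\|_Z\le\int|w|(z)\,\Big\|\int_0^tS(t-\tau)\big[(T_{-z}V(\cdot,z))\,Z\big]\,d\tau\Big\|_Z\,dz ,
\]
and $\|T_{-z}V(\cdot,z)\|_{L^{(d+2)/2}}+\|T_{-z}V(\cdot,z)\|_{L^2_tB_2^{-1/2,s_c}}=\|V(\cdot,z)\|_{L^{(d+2)/2}}+\|V(\cdot,z)\|_{L^2_tB_2^{-1/2,s_c}}\le\|V\|_V$ uniformly in $z$. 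Using $\|w\|_M\le\|\la y\ra w\|_M$, it therefore suffices to prove that for every deterministic $h$ with $\|h\|_{L^{(d+2)/2}(\R\times\R^d)}+\|h\|_{L^2_tB_2^{-1/2,s_c}}\le1$ and every $Z\in E_Z$,
\[
\Big\|\int_0^tS(t-\tau)\,[h(\tau)Z(\tau)]\,d\tau\Big\|_Z\lesssim\|Z\|_Z .
\]

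\emph{Step 2: reduction to a bilinear bound via Strichartz.} Each of the four components of $\|\cdot\|_Z$ is $\|\cdot\|_{L^{q_2}_tL^{r_2}_x}$, up to a Besov refinement at high frequency in the last one, for a pair $(q_2,r_2)$ admissible at some regularity $s_2$ in the sense of Corollary~\ref{cor:strichartz}: $(p,p)$ and $(\infty,2)$ carry the $\la\nabla\ra^{s_c}$ derivatives explicitly ($s_2=0$ applied to the forcing $\la\nabla\ra^{s_c}(hZ)$), $(d+2,d+2)$ uses $s_2=s_c$, and $(4,q)$ uses $s_2=\tfrac{d-3}{4}$ (the extra $\tfrac14$ of high-frequency smoothness in $B_q^{0,1/4}$ being consistent with $\dot H^{s_c}$-data because $\tfrac{d-3}{4}+\tfrac14\le s_c$ for $d\ge4$). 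Since $\la\nabla\ra^{s_c}$ commutes with the Duhamel operator, Corollary~\ref{cor:strichartz} reduces everything to the bilinear bound
\[
\big\|\la\nabla\ra^{s_c}(hZ)\big\|_{L^{q_1'}_tL^{r_1'}_xL^2_\omega}\lesssim\|Z\|_Z
\]
for a suitable admissible pair $(q_1,r_1)$ — one uses the endpoint $(2,\tfrac{2d}{d-2})$ and the diagonal $(p,p)$ — together with the variant with fewer derivatives obtained by interpolating this with the plain bound $\|hZ\|_{L^{q_1'}_tL^{r_1'}_xL^2_\omega}\lesssim\|Z\|_Z$, which covers the $L^{d+2}_{t,x}$ and $B_q^{0,1/4}$ pieces.

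\emph{Step 3: the bilinear bound, and the main obstacle.} Decompose $hZ$ with a Littlewood--Paley partition of unity into low--high, high--low and resonant frequency interactions, distributing $\la\nabla\ra^{s_c}$ by the Coifman--Meyer / fractional Leibniz rule. In the high--low and resonant regimes the derivative is placed on the high-frequency factor of $h$, whose high-frequency part after applying $\la\nabla\ra^{s_c}$ lies in $L^2_tL^2_xL^2_\omega$ — exactly the content of the $B_2^{-1/2,s_c}$ hypothesis on $h$ — and is multiplied by the low-frequency part of $Z$, which lies in $L^\infty_tL^d_xL^2_\omega$ by the Sobolev embedding $H^{s_c}(\R^d)\hookrightarrow L^d(\R^d)$ and is controlled by $\|Z\|_Z$; Hölder then places the product in $L^2_tL^{2d/(d+2)}_xL^2_\omega$, the dual of the endpoint pair. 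In the low--high regime the derivative is placed on the high-frequency factor of $Z$, so that $\la\nabla\ra^{s_c}Z\in L^p_{t,x}L^2_\omega$ (the $W^{s_c,p}$ component of $\|Z\|_Z$) is multiplied by $h\in L^{(d+2)/2}_{t,x}$ (whose low-frequency part is moreover in $L^2$, since $\dot H^{-1/2}\hookrightarrow L^2$ at low frequencies); Hölder, with $\tfrac2{d+2}+\tfrac d{2(d+2)}=\tfrac1{p'}$, places the product in $L^{p'}_{t,x}L^2_\omega$, the dual of the diagonal pair. Summing the dyadic pieces and invoking Corollary~\ref{cor:strichartz} closes the estimate. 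The delicate step — the one with no slack, and where the particular exponents $p$, $d+2$, $q$ and the low-frequency index $-\tfrac12$ of $E_V$ are really used — is the frequency bookkeeping: one must check that in every interaction the part of $h$ carrying only the negative regularity $-\tfrac12$ is paired with a genuinely high-frequency factor of $Z$, that the resonant high-frequency sums are $\ell^2$-summable against the $B_q^{0,1/4}$ and $W^{s_c,p}$ norms of $Z$, and that all the Hölder exponents obey the admissibility identities behind Corollary~\ref{cor:strichartz}.
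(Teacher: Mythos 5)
Your proof is correct and follows the same skeleton as the paper's: Minkowski's inequality and translation invariance reduce both $Q_1$ and $Q_2$ to a single Duhamel forcing estimate, which is then closed by the inhomogeneous Strichartz estimate of Corollary~\ref{cor:strichartz} together with a bilinear space--time H\"older bound. Where you diverge is in how the bilinear step is executed. The paper places the forcing in the single dual space $L^{p'}_tW^{s_c,p'}_xL^2_\omega$ and applies the generalized Leibniz rule once, with the two H\"older splittings $\tfrac1{p'}=\tfrac12+\tfrac1{d+2}$ (derivative on $V$, paired with the $L^{d+2}_{t,x}L^2_\omega$ component of $\|Z\|_Z$) and $\tfrac1{p'}=\tfrac2{d+2}+\tfrac1p$ (derivative on $Z$, paired with $V\in L^{(d+2)/2}_{t,x}$); note that $B_2^{-1/2,s_c}\hookrightarrow H^{s_c}$ because the low-frequency weight $2^{-j/2}$, $j<0$, exceeds $1$, so $\|V(\cdot,0)\|_{L^2_tH^{s_c}_x}\le\|V\|_V$ and no separate low-frequency discussion is required. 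You instead run an explicit paraproduct decomposition and route the ``derivative on $V$'' interactions through the dual of the endpoint pair $(2,\tfrac{2d}{d-2})$, using $Z\in L^\infty_tL^d_xL^2_\omega$ via the Sobolev embedding $H^{s_c}\hookrightarrow L^d$. This is legitimate (the endpoint is admissible since $d\ge 3$) and your exponent arithmetic checks out, but it buys nothing here and commits you to the dyadic bookkeeping that you flag at the end as the delicate, unverified step; the paper's choice of splittings makes that bookkeeping disappear into a single black-box application of the fractional Leibniz rule.
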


\begin{proof} Set 
\[
\tilde V_1 = w* V(y=0) Z,\quad \tilde V_2 (x) = - \int dz w(z) V(x,z) Z(x+z).
\]

We have 
\[
Q_k(Z,V) = -i\int_0^t S(t-\tau) \tilde V_k(\tau) d\tau .
\]
By Strichartz estimates (\ref{ineq:strichartz-derivatives}), we have 
\[
\|Q_k(Z,V)\|_Z \lesssim \|\tilde V_k\|_{L^{p'}(\R,W^{s_c,p'}(\R^d,L^2(\Omega)))}.
\]
We use the generalized Leibniz rule with
\[
\frac1{p'} = \frac1{p} + \frac2{d+2},\quad \frac1{p'} = \frac1{d+2} +  \frac12
\]
and Hölder inequality in time to get
\begin{multline*}
\|\tilde V_1\|_{L^{p'}(\R,W^{s_c,p'}(\R^d,L^2(\Omega)))} \\
\lesssim \|w* V(y=0)\|_{L^{(d+2)/2}(\R\times \R^d) \cap L^2(\R,H^{s_c}(\R^d))} \|Z\|_{L^p(\R,W^{s_c,p}(\R^d,L^2(\Omega)))\cap L^{d+2}(\R\times \R^d,L^2(\Omega))} .
\end{multline*}
Using that $w$ is a finite measure, we get
\[
\|\tilde V_1\|_{L^{p'}(\R,W^{s_c,p'}(\R^d,L^2(\Omega)))} \lesssim \|V\|_{V} \|Z\|_{Z} .
\]

For $\tilde V_2$, we use the Minkowski's inequality to get
\[
\|\tilde V_2\|_{L^{p'}(\R,W^{s_c,p'}(\R^d,L^2(\Omega)))} \leq \int |w(z)| \| V(z) T_z Z\|_{L^{p'}(\R,W^{s_c,p'}(\R^d,L^2(\Omega)))}
\]
We use again the generalised Leibniz rule to get
\[
\|\tilde V_2\|_{L^{p'}(\R,W^{s_c,p'}(\R^d,L^2(\Omega)))} \lesssim \int |w(z)| \| V(z)\|_{L^{(d+2)/2}(\R\times \R^d) \cap L^2(\R,H^{s_c}(\R^d))} \|T_z Z\|_{Z}
\]
The $Z$ norm being invariant under the action of translations, and because of the definition of $V$, we get
\[
\|\tilde V_2\|_{L^{p'}(\R,W^{s_c,p'}(\R^d,L^2(\Omega)))} \lesssim \int |w(z)| \| V\|_{V} \| Z\|_{Z}.
\]
We use that $\an z w$ is a finite measure to conclude.
\end{proof}

\subsection{Quadratic terms in the correlation function}

\begin{proposition}\label{prop:embed} The bilinear map
\[
\begin{array}{l l l}
     E_Z \times E_Z &  \mapsto & E_V  \\
     (Z,Z')& \mapsto & ((x,y)\mapsto \E(Z(x+y)Z'(x))
\end{array}
\]
is well-defined and continuous.
\end{proposition}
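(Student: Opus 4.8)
The plan is to bound each of the four norms that make up $\|\cdot\|_V$ (see \eqref{normV}), namely the $\mathcal C(\R^d,L^{\frac{d+2}{2}}(\R\times\R^d))$ part and the $\mathcal C(\R^d,L^2(\R,B_2^{-1/2,s_c}(\R^d)))$ part, for the function $W(x,y)=\E(Z(x+y)Z'(x))$, in terms of the $E_Z$ norms of $Z$ and $Z'$. The key observation is the pointwise bound coming from Cauchy--Schwarz in $\Omega$:
\[
|W(x,y)| = |\E(Z(x+y)Z'(x))| \leq \|Z(x+y)\|_{L^2_\omega}\,\|Z'(x)\|_{L^2_\omega},
\]
which, together with the translation invariance of Lebesgue and Sobolev norms in $x$, immediately reduces everything to estimates on $\|Z\|_{L^\infty_\omega\to}$-type quantities that are already controlled by the components of $\|\cdot\|_Z$ in \eqref{normZ}.

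First I would treat the $L^{\frac{d+2}{2}}_{t,x}$ bound. Fix $y\in\R^d$. Using $|W(x,y)|\le \|Z(x+y)\|_{L^2_\omega}\|Z'(x)\|_{L^2_\omega}$ and Hölder in $(t,x)$ with exponents $d+2$ and $d+2$ (so that $\frac{2}{d+2}=\frac{1}{\frac{d+2}{2}}$), one gets
\[
\|W(\cdot,y,\cdot)\|_{L^{\frac{d+2}{2}}(\R\times\R^d)} \le \|T_y Z\|_{L^{d+2}(\R\times\R^d,L^2(\Omega))}\,\|Z'\|_{L^{d+2}(\R\times\R^d,L^2(\Omega))},
\]
and the translation invariance of the $L^{d+2}_{t,x}L^2_\omega$ norm gives a bound by $\|Z\|_Z\|Z'\|_Z$, uniformly in $y$. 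Continuity in $y$ follows from the continuity of translations on $L^{d+2}(\R\times\R^d,L^2(\Omega))$ (a standard density argument: approximate $Z$ by a compactly supported continuous function), combined with the bilinear estimate to pass to the limit. For the Besov part, I would again fix $y$, apply a Littlewood--Paley decomposition in the $x$ variable, and use that for each dyadic block the norm of $W(\cdot,y,\cdot)$ is controlled via the same Cauchy--Schwarz/Hölder argument but now pairing one factor in $L^4_t L^q_x L^2_\omega$ with the other in $L^4_t L^{q'}_x L^2_\omega$ where $q$ is the exponent appearing in \eqref{normZ} (note $\frac{2}{4}=\frac12$ and $\frac{1}{q}+\frac{1}{q'}=\frac12$ is arranged since $q=\frac{4d}{d+1}$), distributing the $s_c$ derivatives onto a single factor at high frequency via the fractional Leibniz rule, and summing the dyadic pieces with the $B_2^{-1/2,s_c}$ weights. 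Again translation invariance handles the uniformity in $y$ and a density argument handles continuity in $y$.

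The main obstacle I expect is matching the high-frequency Sobolev regularity $s_c$ in the $B_2^{-1/2,s_c}$ norm of $W$ in the $x$ variable to the regularity available on $Z$ and $Z'$: one has $W(x,y)=\E(Z(x+y)Z'(x))$ so derivatives in $x$ hit \emph{both} factors, and a naive Leibniz expansion wants $s_c$ derivatives on each, which is not available. The resolution is the standard paraproduct/Littlewood--Paley bookkeeping: at a given output frequency $2^j$ (with $j\ge 0$ for the $s_c$ regime), the product $Z(x+y)Z'(x)$ only needs one of the two inputs to be at frequency $\gtrsim 2^j$ while the other is at comparable or lower frequency, so the $2^{js_c}$ weight can always be absorbed by the higher-frequency factor's $W^{s_c,\cdot}$ norm (in $L^p_tW^{s_c,p}_xL^2_\omega$ or in $L^4_tB_q^{0,1/4}L^2_\omega$ depending on which pairing is used), while the lower-frequency factor is controlled by a scale-invariant Lebesgue norm; the three regions (low-high, high-low, high-high) are summed in $\ell^2$ using Cauchy--Schwarz over dyadic scales. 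The low-frequency $-1/2$ weight is harmless since it only improves integrability at low $x$-frequency. This is precisely the mechanism invoked in the remark after \eqref{normV} and parallels the bilinear estimates used for the cubic NLS, so the argument, while technical, is routine; I would present it as a sequence of Littlewood--Paley estimates rather than grinding through every constant.
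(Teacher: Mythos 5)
Your treatment of the $\mathcal C_y L^{\frac{d+2}{2}}_{t,x}$ component coincides with the paper's: Cauchy--Schwarz in $\Omega$, H\"older with two copies of the $L^{d+2}_{t,x}L^2_\omega$ norm, and translation invariance together with continuity of translations for the dependence on $y$. That part is fine.

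The gap is in the Besov component $L^2_t B_2^{-1/2,s_c}$. You propose to pair one factor in $L^4_tL^q_xL^2_\omega$ with the other in $L^4_tL^{q'}_xL^2_\omega$ with $\frac1q+\frac1{q'}=\frac12$; since $q=\frac{4d}{d+1}$ this forces $q'=\frac{4d}{d-1}$, and no component of the $E_Z$ norm \eqref{normZ} controls $L^4_tL^{q'}_x$: the $L^4_tB_q^{0,\frac14}$ piece only provides a $2^{j/4}$ gain at frequency $2^j$, while Bernstein from $L^q$ to $L^{q'}$ costs $2^{j/2}$. Moreover, the claim that the $-\frac12$ low-frequency weight is ``harmless'' is backwards: it is exactly what makes the low-frequency estimate possible. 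The paper's proof splits $\|\cdot\|_{B_2^{-1/2,s_c}}\le \|\cdot\|_{\dot H^{-1/2}}+\|\cdot\|_{H^{s_c}}$ and uses two different H\"older pairings. For the $\dot H^{-1/2}$ piece it pairs the two factors with the \emph{same} exponent, $L^4_tL^q_x\times L^4_tL^q_x\to L^2_tL^{q/2}_x$, and then invokes the homogeneous Sobolev embedding $L^{q/2}_x\hookrightarrow \dot H^{-1/2}_x$ (valid because $\frac{d+1}{2d}=\frac12+\frac{1/2}{d}$); this is precisely where the $-\frac12$ comes from, and it costs nothing beyond the norms already in $E_Z$. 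For the $H^{s_c}$ piece it applies the fractional Leibniz rule with the pairing $\frac12=\frac1p+\frac1{d+2}$, putting all $s_c$ derivatives on one factor in $L^p_tW^{s_c,p}_xL^2_\omega$ and the other factor in $L^{d+2}_{t,x}L^2_\omega$ --- not the $L^4_tL^q_x$ norms, which carry only $\frac14$ of a derivative at high frequency. Your paraproduct bookkeeping of low-high, high-low and high-high interactions is a legitimate way to organize the Leibniz rule, but with the exponents you specify the dyadic sums cannot close, so the argument as written does not go through.
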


\begin{proof} 

\underline{Estimate in $C_y L^{(d+2)/2}_{t,x}$}. We recall that $T_y$ is the translation such that $T_y u(x) = u(x+y)$. For a given $y\in \R^d$, we have 
\[
\|\E(T_y Z Z')\|_{L^{(d+2)/2}(\R\times \R^d)} \leq \|T_y Z\|_{L^{d+2}(\R\times \R^d,L^2(\Omega))} \| Z'\|_{L^{d+2}(\R\times \R^d,L^2(\Omega))}.
\]
We recall that Lebesgue norms are invariant under the action of translation, and that for any $u\in L^{d+2}_{t,x}L^2_\omega$ the curve $y\mapsto T_y u$ is continuous in $L^{d+2}_{t,x}L^2_\omega$. Thus, the above estimate implies
\[
\|\E(T_y Z Z')\|_{L^{(d+2)/2}(\R\times \R^d)} \leq \| Z\|_{Z} \| Z'\|_{Z}
\]
as well as $\E(T_y Z Z')\in C_yL^{(d+2)/2}_{t,x}$. We deduce that
\[
\|\E(T_y Z Z')\|_{L^\infty(\R^d, L^{(d+2)/2}(\R^d\times \R))} \leq \| Z\|_{Z} \| Z'\|_{Z}.
\]

\noindent \underline{Estimate in $C_y L^2_t H^{s_c}_x$}. We use the generalised Leibniz rule, with
\[
\frac12 = \frac{d}{2(d+2)} + \frac1{d+2} = \frac1{p} + \frac1{d+2}
\]
to get
\begin{align*}
\|\E(T_y Z Z')\|_{L^2(\R,H^{s_c}(\R^d))} \leq (\|T_y Z\|_{L^p(\R,W^{s_c,p}(\R^d,L^2(\Omega)))} + \|T_y Z\|_{L^{d+2}(\R\times \R^d,L^2(\Omega))} )\\
\times (\| Z'\|_{L^p(\R,W^{s_c,p}(\R^d,L^2(\Omega)))} + \| Z'\|_{L^{d+2}(\R\times \R^d,L^2(\Omega))} ) .
\end{align*}
Again, we recall the invariance of Sobolev norms under translations, and that for any $u\in L^{p}_{t}W^{s_c,p}_xL^2_\omega$ the curve $y\mapsto T_y u$ is continuous in $L^{p}_{t}W^{s_c,p}_xL^2_\omega$. This shows
\[
\sup_{y\in \mathbb R^d} \, \|\E(T_y Z Z')\|_{L^2(\R,H^{s_c}(\R^d))} \leq \| Z\|_{Z} \| Z'\|_{Z}
\]
as well as $\E(T_y Z Z')\in C_y L^2_t H^{s_c}_x L^2_\omega$.

\noindent \underline{Estimate in $C_y L^2_t B^{-1/2,s_c}_2$}. For this part of the $V$ norm, we use that
\[
\|\cdot\|_{L^2(\R, B_2^{-1/2,s_c}(\R^d))} \leq \|\cdot\|_{L^2(\R,\dot H^{-1/2}(\R^d))}  + \|\cdot\|_{L^2(\R,H^{s_c}(\R^d))}.
\]

The second term in the right-hand side has already been estimated. For the 
\[
L^2(\R,\dot H^{-1/2}(\R^d))
\]
norm, we use homogeneous Sobolev estimates to get
\begin{align*}
\|\E(T_y Z Z') \|_{L^2(\R,\dot H^{-1/2}(\R^d))} & \lesssim \|\E(T_y Z Z') \|_{L^2(\R, L^{q/2}(\R^d))} \\
& \lesssim \|T_y Z\|_{L^4(\R,L^q(\R^d,L^2(\Omega)))} \| Z'\|_{L^4(\R,L^q(\R^d,L^2(\Omega)))}.
\end{align*}
Again, we recall the invariance of Lebesgue norms under the action of translations, and that for any $u\in L^4_{t}L^q_x L^2_\omega$ the curve $y\mapsto T_y u$ is continuous in $L^4_{t}L^q_x L^2_\omega$. This implies
\[
\sup_{y\in \mathbb R^d} \, 
\|\E(T_y Z Z') \|_{L^2(\R,B_2^{-1/2,0}(\R^d))} \lesssim \| Z\|_{Z} \| Z'\|_{Z}.
\]
as well as $\E(T_y Z Z')\in C_y L^2_t B_2^{-1/2,0}L^2_\omega $.

\end{proof}

\begin{proposition}\label{prop:estQ34} There exists $C$ such that for all $Z,V \in E_Z \times E_V$ and $k=3,4$, we have 
\[
\|Q_k(Z,V)\|_V \leq C \| \la y\ra w\|_{M} \|Z\|_Z \|V\|_V.
\]
\end{proposition}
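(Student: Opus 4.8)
The plan is to mimic the treatment of $L_3,L_4$ in Proposition~\ref{prop:invl3-l4}: a crude Cauchy--Schwarz bound in the probability variable handles the parts of the $E_V$ norm that are pointwise in $(x,t)$, while the remaining part requires unfreezing the Gaussian structure of the equilibrium $Y$. Set $W:=Q_{k-2}(Z,V)$, so that $W\in E_Z$ with $\|W\|_Z\lesssim\|\la y\ra w\|_M\|Z\|_Z\|V\|_V$ by Proposition~\ref{prop:estQ12}, and recall that by definition
\[
Q_k(Z,V)(x,y,t)=\E[\overline{Y(x+y,t)}\,W(x,t)]+\E[\overline{W(x+y,t)}\,Y(x,t)]=:A(x,y,t)+B(x,y,t).
\]
Since $\overline{B(x,y,t)}=A(x+y,-y,t)$ and the norms entering $\|\cdot\|_V$ are invariant under $u\mapsto\overline{u(\cdot+y,-y,\cdot)}$ (spatial translation invariance together with the supremum over $y$), it suffices to bound $\|A\|_V$.

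For the $\mathcal C(\R^d,L^{(d+2)/2}(\R\times\R^d))$ part of $\|A\|_V$ I would argue as in Proposition~\ref{prop:embed}: Cauchy--Schwarz in $\omega$ gives the pointwise bound $|A(x,y,t)|\le\|Y(x+y,t)\|_{L^2(\Omega)}\|W(x,t)\|_{L^2(\Omega)}=\big(\int g\big)^{1/2}\|W(x,t)\|_{L^2(\Omega)}$, hence by translation invariance $\|A(\cdot,y,\cdot)\|_{L^{(d+2)/2}(\R\times\R^d)}\lesssim_g\|W\|_{L^{(d+2)/2}(\R\times\R^d,L^2(\Omega))}$ uniformly in $y$. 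Now $W$ lies in $L^{d+2}(\R\times\R^d,L^2(\Omega))$ (part of the $Z$ norm) and in $L^p(\R\times\R^d,L^2(\Omega))$, since $\|W\|_{L^p_tL^p_xL^2_\omega}\lesssim\|W\|_{L^p_tW^{s_c,p}_xL^2_\omega}\le\|W\|_Z$, the Bessel potential $\langle D\rangle^{-s_c}$ being bounded on $L^p$ for $1<p<\infty$; as $(d+2)/2\in[p,d+2]$ for $d\ge4$, interpolation yields $W\in L^{(d+2)/2}(\R\times\R^d,L^2(\Omega))$ with norm $\lesssim\|\la y\ra w\|_M\|Z\|_Z\|V\|_V$. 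Continuity of $y\mapsto A(\cdot,y,\cdot)$ in $L^{(d+2)/2}$ follows from $|A(x,y+h,t)-A(x,y,t)|\le\big(\int g(\eta)|e^{i\eta h}-1|^2d\eta\big)^{1/2}\|W(x,t)\|_{L^2(\Omega)}$, the first factor being independent of $(x,t)$ and tending to $0$ by dominated convergence.

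The hard part is the $\mathcal C(\R^d,L^2(\R,B_2^{-1/2,s_c}(\R^d)))$ bound, where the crude pointwise estimate fails because $W$ is not in $L^2$ in time. Here I would unfreeze the randomness as in Proposition~\ref{prop:magicProposition}. Writing $W=Q_{k-2}(Z,V)$ as a Duhamel integral, inserting $\overline{Y(x+y,t)}=\int f(\eta)e^{-i\eta(x+y)+i\theta(\eta)t}\overline{dW(\eta)}$, and using that the expectation only retains the first Gaussian chaos of the source — so that, with $\rho(\eta,x',\tau)\,d\eta:=\E[\overline{dW(\eta)}\,Z(x',\tau)]$, which obeys Bessel's inequality $\big\|\,\rho(\cdot,x',\tau)\,\big\|_{L^2_\eta}\le\|Z(x',\tau)\|_{L^2(\Omega)}$ — the Galilean identity $e^{i\theta(\eta)t}(S(t-\tau)h)(x)=e^{i\eta x+i\theta(\eta)\tau}(S_\eta(t-\tau)[e^{-i\eta\cdot}h])(x)$ from Section~\ref{sec:linear} gives
\[
A(x,y,t)=-i\int f(\eta)\,e^{-i\eta y}\Big[\int_0^t S_\eta(t-\tau)\big[e^{i\theta(\eta)\tau}e^{-i\eta\cdot}\,\Phi_{k-2}(\eta,\cdot,\tau)\big]\,d\tau\Big](x)\,d\eta,
\]
with $\Phi_1(\eta,x',\tau)=(w*V(\cdot,0,\tau))(x')\,\rho(\eta,x',\tau)$ and $\Phi_2(\eta,x',\tau)=-\int w(z)\,V(x',z,\tau)\,\rho(\eta,x'+z,\tau)\,dz$. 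By Minkowski and Cauchy--Schwarz in $\eta$ one reduces to $\|f\|_{L^2_\eta}=\big(\int g\big)^{1/2}$ times the $L^2_\eta$ norm of $\big\|\int_0^tS_\eta(t-\tau)[\cdots]\,d\tau\big\|_{L^2(\R,B_2^{-1/2,s_c}(\R^d))}$. Since $\theta(\cdot+\eta)-\theta(\eta)$ satisfies the ellipticity \eqref{id:uniform-ellipticity-theta} and the regularity \eqref{id:hp-regularite-tilde-theta} uniformly in $\eta$, the propagators $S_\eta$ enjoy the Strichartz estimates of Proposition~\ref{pr:Strichartz-free-evolution} and the low-frequency damping of Proposition~\ref{prop:magicProposition} uniformly in $\eta$; applied frequency by frequency, and combined with the facts that $V(\cdot,z,\cdot)\in L^2(\R,B_2^{-1/2,s_c}(\R^d))$ uniformly in $z$ (the shift by $z$ being absorbed by translation invariance and $\la z\ra w$ being a finite measure) and that $\langle D\rangle^{s_c}\rho$ is controlled in $L^2_\eta$ by $\|Z\|_Z$ via Bessel's inequality after commuting the mixed norms, this closes the estimate. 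The main obstacle I anticipate is precisely this last step: matching the admissible Strichartz pairs for $S_\eta$ with the Lebesgue and Sobolev exponents available on $V$ and on $Z$, and in particular producing the negative low-frequency regularity $-1/2$ of the target out of the averaging over the transport directions $\eta$. Continuity in $y$ is again obtained by dominated convergence, using that $\eta\mapsto f(\eta)e^{-i\eta y}$ varies continuously with $y$ in $L^2_\eta$; and the case $k=4$ differs from $k=3$ only through the extra $z$-integral, handled exactly as in Propositions~\ref{prop:estQ12} and \ref{pr:continuity-L4}.
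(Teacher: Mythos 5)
Your reduction to a single term $A=\E[\overline{T_yY}\,Q_{k-2}]$ and your treatment of the $\mathcal C_yL^{(d+2)/2}_{t,x}$ component are correct and essentially identical to the paper's (Cauchy--Schwarz in $\omega$, $Y\in L^\infty_{t,x}L^2_\omega$, interpolation of $\|Q_{k-2}\|$ between the $L^{d+2}$ and $L^{2(d+2)/d}$ bounds of Proposition \ref{prop:estQ12}, continuity of translations). The gap is in the $\mathcal C_yL^2_tB_2^{-1/2,s_c}$ estimate, exactly at the step you flag as the anticipated obstacle. Your plan is to expand $Y$ in its Wiener integral, commute with the flow via the Galilean identity, and then apply Cauchy--Schwarz in $\eta$ followed by Strichartz estimates for $S_\eta$ \emph{uniformly in} $\eta$. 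This cannot work: once you have separated the $\eta$-integral by Cauchy--Schwarz, you need, for each fixed $\eta$, a bound of the form $\|\int_0^tS_\eta(t-\tau)F(\tau)\,d\tau\|_{L^2_t\dot H^{-1/2}_x}\lesssim\|F\|_{\ldots}$, and $L^2_t\dot H^{-1/2}_x$ is not reachable by the Strichartz estimates of Proposition \ref{pr:Strichartz-free-evolution} or Corollary \ref{cor:strichartz} (the corresponding pair satisfies $\tfrac2{q_2}+\tfrac d{r_2}+s_2=\tfrac d2$ only with $r<2$ after Sobolev embedding, and already the homogeneous estimate $\|S_\eta(t)u_0\|_{L^2_t\dot H^{-1/2}_x}$ diverges since the $L^2_x$ norm is conserved in time). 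The $-1/2$ low-frequency gain is not a property of any single propagator $S_\eta$; it is produced precisely by the integration over $\eta$ against $g(\eta)\,d\eta$ (the change of variables and the $|\xi|^{-1}$ factor in the proof of Proposition \ref{prop:magicProposition}), and Cauchy--Schwarz in $\eta$ discards it irretrievably.

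The paper closes this step differently: it argues by duality, pairing $\E(T_yQ_{k-2}\bar Y)$ with a test function $U\in L^2_tB_2^{1/2}$, moving the Duhamel propagator onto the product $UY$, and invoking Proposition \ref{prop:magicProposition} with $\sigma_1=1$ to place $\int_\tau^\infty S(\tau-t)[U(t)Y(t)]\,dt$ in $L^{p_1}_{t,x}L^2_\omega$ with $p_1=2\tfrac{d+2}{d-2}$ non-admissible; the remaining factor $\tilde V_{k-2}$ is then handled by H\"older exactly as in Proposition \ref{prop:estQ12}. The high-frequency $H^{s_c}$ part is obtained the same way after distributing derivatives by Leibniz onto $Y$ and onto $\tilde V_{k-2}$ and applying Proposition \ref{prop:magicProposition} to $\nabla^{\otimes j}Y$, followed by interpolation. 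If you want to keep your direct (non-dual) expansion, you would have to re-prove an inhomogeneous analogue of Proposition \ref{prop:magicProposition} with the $\eta$-average kept intact, which amounts to redoing that proposition rather than citing it; the duality route is the efficient way to reuse it as stated.
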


\begin{proof} As in the proof of \Cref{prop:estQ12}, we set
\[
\tilde V_1 = w* V(y=0) Z,\quad \tilde V_2 (x) = - \int dz w(z) V(x,z) Z(x+z).
\]
We recall that $Q_3$ and $Q_4$ are defined as for $k=3,4$
\[
Q_k (Z,V)(y) = \E(T_y \bar Y Q_{k-2}) + \E( T_y \bar Q_{k-2} Y).
\]

\noindent\underline{Estimate in $\mathcal{C}_yL_{t,x}^{\frac{d+2}{2}}$.} Using that $Q_{k-2}(Z,V)$ is continuous from $E_Z\times E_V$ to $L^{d+2}(\R\times \R^d )$, by \Cref{prop:estQ12}, and since $Y\in L^\infty(\R\times \R^d, L^2(\Omega))$ and Lebesgue norms are invariant under the action of translations, we get that 
\[
\|Q_k(Z,V)(y)\|_{L^{d+2}(\R\times \R^d)} \lesssim \|Z\|_Z\|V\|_V
\]
Moreover, again by \Cref{prop:estQ12}, the same inequality holds with the $L^{\frac{2(d+2)}d}$ norm. Then, by interpolation we get $Q_k(Z,V)\in L_y^\infty,L_{t,x}^{(d+2)/2}.$ \\
To get the continuity with respect to $y$ we use that for any $u\in L_{t,x}^{(d+2)/2}$ the curve $y\mapsto T_yu$ is continuous in $ L_{t,x}^{(d+2)/2}$. 

\noindent \underline{Estimate in $C_y L^2_t B^{-1/2,s_c}_x$}. We prove that 
\[
\|Q_k(Z,V)(y)\|_{L^2(\R,B_2^{-1/2,s_c}(\R^d))} \lesssim \|Z\|_Z \|V\|_V.
\]

We remark that because of the invariance of Besov norms under conjugation and translations, we have
\[
\|\E( T_y \bar Q_{k-2} Y)\|_{L^2(\R,B_2^{-1/2,s_c}(\R^d))} = \|\E(  Q_{k-2} T_{-y}\bar Y)\|_{L^2(\R,B_2^{-1/2,s_c}(\R^d))}.
\]
Therefore, we bound only uniformly in $y$,
\[
\|\E( T_y \bar Q_{k-2} Y)\|_{L^2(\R,B_2^{-1/2,s_c}(\R^d))}.
\]
We proceed by duality and treat separately low and high frequencies. Take 
\[
U \in L^2(\R, B_2^{1/2}(\R^d))
\]
such that $U$ is localised in low frequencies. We have 
\[
\an{U, \E(  T_{y}Q_{k-2} \bar Y)} = \E(\an{U Y,T_y Q_{k-2}}).
\]
Using the definition of $Q_{k-2}$ and because the linear flow $S$ commutes with translations we get
\[
\an{U, \E( T_{y} Q_{k-2} \bar Y)} = \E(\an{U Y, -i\int_{0}^t S(t-\tau)T_{y} \tilde V_{k-2}(\tau)}) = \E(\an{\int_{\tau}^\infty U(t) Y(t) dt,T_{y} \tilde V_{k-2}}). 
\]
Set $p_1 = 2\frac{d+2}{d-2}$, we have 
\[
|\an{U, \E(  T_y Q_{k-2}\bar Y)}| \leq \|\int_{\tau}^\infty U(t) Y(t) dt\|_{L^{p_1}(\R\times \R^d, L^2(\Omega))}\| T_y\tilde V_{k-2}(\tau)\|_{L^{p_1'}(\R\times \R^d,L^2(\Omega))}. 
\]
We have that $\frac2{p_1} + \frac{d}{p_1} = \frac{d}{2} - 1$ and thus, by Proposition \ref{prop:magicProposition},
\[
\|\int_{\tau}^\infty U(t) Y(t) dt\|_{L^{p_1}(\R\times \R^d, L^2(\Omega))} \lesssim \|U\|_{L^2(\R, B_2^{1/2,1}(\R^d))}
\]
and given that $U$ is localised in low frequencies, we have 
\[
\|\int_{\tau}^\infty U(t) Y(t) dt\|_{L^{p_1}(\R\times \R^d, L^2(\Omega))} \lesssim \| U\|_{L^2(\R, B_2^{1/2,0}(\R^d))}.
\]
We remark that $\frac1{p_1'} = \frac12 + \frac2{d+2}$, and thus by H\"older inequality, we get
\[
\|\tilde V_1\|_{L^{p_1'}(\R\times \R^d, L^2(\Omega))} \leq \|w*V(y=0)\|_{L^2(\R\times \R^d)} \|Z\|_{L^{(d+2)/2}(\R\times \R^d,L^2(\Omega))}.
\]
By definition of the $V$ norm, we have 
\[
\|w*V(y=0)\|_{L^2(\R\times \R^d)} \lesssim \|V\|_V
\]
and because in dimension higher than $4$, $\frac{d+2}{2} \in [2\frac{d+2}{d}, d+2]$, we have 
\[
\|Z\|_{L^{(d+2)/2}(\R\times \R^d,L^2(\Omega))}\leq \|Z\|_Z.
\]
For the same reasons we have 
\[
\|\tilde V_2\|_{L^{p_1'}(\R\times \R^d, L^2(\Omega))} \leq \int dz |w(z)| \|V(z)\|_{L^2(\R\times \R^d)} \|Z\|_Z
\]
and we use the definition of the $V$ norm and the fact that $\an z w$ is a finite measure to conclude the analysis in low frequencies.\\
We turn to high frequencies, we take $U\in L^2(\R\times \R^d)$ localised in high frequencies.\\ 
We take $n\in \N$ and consider
\[
\an{U, \nabla^{\otimes n} \E( T_{y} Q_{k-2} \bar Y)} = \sum_{j=0}^n \parmi{j}{n} \E(\an{U,\nabla^{\otimes (n-j)}  \bar Y \otimes \nabla^{\otimes j} T_y Q_{k-2} }.
\]
With the same computation as previously, we have 
\[
\an{U, \nabla^{\otimes n} \E( T_{y} Q_{k-2} \bar Y)} = \sum_{j=0}^n \parmi{j}{n} \E(\an{\int_{\tau}^\infty S(\tau-t) U(t)\nabla^{\otimes (n-j)}  Y(t)dt,  \nabla^{\otimes j} T_y \tilde V_{k-2} }.
\]
By H\"older's inequality,
\begin{multline*}
|\an{U, \nabla^{\otimes n} \E( T_{y} Q_{k-2} \bar Y)}|\\
\leq  \sum_{j=0}^n \parmi{j}{n} \|\int_{\tau}^\infty S(\tau-t) U(t)\nabla^{\otimes (n-j)}  Y(t)dt\|_{L^p(\R\times\R^d,L^2(\Omega)}  \|\nabla^{\otimes j} T_y\tilde V_{k-2}\|_{L^{p'}(\R\times \R^d,L^2(\Omega))}.
\end{multline*}
By Proposition \ref{prop:magicProposition}, applied to $\nabla^{\otimes (n-j)}  Y(t)$ we have 
\[
 \|\int_{\tau}^\infty S(\tau-t) U(t)\nabla^{\otimes (n-j)}  Y(t)dt\|_{L^p(\R\times\R^d,L^2(\Omega)} \lesssim \|U\|_{L^2(\R,B_2^{-1/2,0}(\R^d))}.
\]
We deduce that 
\[
\tilde V_{k-2} \mapsto \Pi \E(Y \int_{0}^t S(t-\tau) \tilde V_{k-2}(\tau) d\tau
\]
where $\Pi$ projects into high frequencies is continuous from $L^{p'}(\R,W^{n,p'}( \R^d,L^2(\Omega)))$ to $L^\infty_y,L^2(\R, H^n)$. By interpolation, we get that it is continuous from \\ $L^{p'}(\R,W^{s_c,p'}( \R^d,L^2(\Omega)))$ to $L^\infty_y,L^2(\R, H^{s_c})$. 
\\We refer to the proof of Proposition \ref{prop:estQ12} to get that
\[
\|\tilde V_{k-2}\|_{L^{p'}(\R,W^{s_c,p'}( \R^d,L^2(\Omega)))}\lesssim \|Z\|_Z \|V\|_V.
\]
Moreover, for any $u\in L^{p_1'}(\R\times \R^d,L^2(\Omega))$ and for any $v\in L^{p'}(\R,W^{s_c,p'}( \R^d,L^2(\Omega)))$, the curves $y\mapsto T_y u$ and $y\mapsto T_y v$ are continuous respectively in $L^{p_1'}(\R\times \R^d,L^2(\Omega))$ and in $L^{p'}(\R,W^{s_c,p'}( \R^d,L^2(\Omega)))$. Thus we get the continuity with respect to $y$.
\end{proof}

\section{Proof of the theorem} \label{sec:fixed-point}

\subsection{Free evolution of the initial data}

\begin{proposition}\label{prop:ID-Z}

Under the assumptions of Proposition \ref{pr:Strichartz-free-evolution}, for any $Z_0\in L^2_\omega H^{s_c}_x$ one has $S(t)Z_0\in E_Z$ with
\[
\| S(t)Z_0\|_Z\lesssim \|Z_0\|_{L^2_\omega H^{s_c}_x}.
\]

\end{proposition}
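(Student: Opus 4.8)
The plan is to bound separately each of the four pieces of the norm \eqref{normZ}, using in every case that $S(t)=e^{-it\theta(D)}$ is a Fourier multiplier in the space variable only, so it commutes both with $\langle\nabla\rangle^{s}$ and with taking the $L^2_\omega$ norm, and that Minkowski's integral inequality allows us to pull the probability variable outside of the space--time Lebesgue norms (all the relevant exponents $p=2\frac{d+2}{d}$, $d+2$, $4$ and $q=\frac{4d}{d+1}$ being $\geq 2$). This reduces all the estimates to the deterministic Strichartz bounds of Proposition \ref{pr:Strichartz-free-evolution} and Corollary \ref{cor:strichartz} applied to $Z_0(\cdot,\omega)$ for a.e.\ $\omega$, followed by integration in $\omega$.

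Concretely: since $\theta$ is real-valued, $S(t)$ is an isometry of $H^{s_c}(\R^d)$, so $\|S(t)Z_0\|_{L^2_\omega H^{s_c}_x}=\|Z_0\|_{L^2_\omega H^{s_c}_x}$ for all $t$, and strong continuity of the group $S(t)$ on $H^{s_c}$ together with dominated convergence in $\omega$ gives $S(t)Z_0\in\mathcal C(\R,L^2_\omega H^{s_c}_x)$. For the $L^p_t W^{s_c,p}_x L^2_\omega$ piece, Minkowski gives $\|S(t)Z_0\|_{L^p_tW^{s_c,p}_xL^2_\omega}\leq \|S(t)\langle\nabla\rangle^{s_c}Z_0\|_{L^2_\omega L^p_{t,x}}$, and for fixed $\omega$ the pair $(p,p)$ is Strichartz-admissible since $\frac2p+\frac dp=\frac{d+2}{p}=\frac d2$ and $(p,p)\neq(2,\infty)$, so Proposition \ref{pr:Strichartz-free-evolution} yields $\|S(t)\langle\nabla\rangle^{s_c}Z_0(\cdot,\omega)\|_{L^p_{t,x}}\lesssim\|Z_0(\cdot,\omega)\|_{H^{s_c}_x}$; one then takes the $L^2_\omega$ norm. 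For the $L^{d+2}_{t,x}L^2_\omega$ piece, Minkowski reduces to $\|S(t)Z_0(\cdot,\omega)\|_{L^{d+2}_{t,x}}$, which by Corollary \ref{cor:strichartz} with $(q_2,r_2)=(d+2,d+2)$ and $s_2=s_c$ — the scaling $\frac{2}{d+2}+\frac{d}{d+2}+s_c=1+s_c=\frac d2$ holds and the exponent $r$ with $\frac1r=\frac1{d+2}+\frac{s_c}{d}$ satisfies $r\geq 2$ — is $\lesssim\|Z_0(\cdot,\omega)\|_{H^{s_c}_x}$; again integrate in $\omega$.

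The only piece needing genuine care is the last one, $L^4_t B_q^{0,1/4}(\R^d,L^2(\Omega))$ with $q=\frac{4d}{d+1}$, and this is the main (still minor) obstacle: for $d\geq 4$ the pair $(4,q)$ is not Strichartz-admissible, and the dyadic $\ell^2$-sum defining the Besov norm is strictly stronger than a Sobolev norm $W^{1/4,q}$, so a direct application of Strichartz does not suffice. Instead one argues block by block: writing $\varphi_j(D)$ for the Littlewood--Paley projector at frequency $2^j$, one uses Strichartz at the admissible pair $(4,\rho)$ with $\rho=\frac{2d}{d-1}\in(2,\tfrac83]$ (indeed $\frac24+\frac d\rho=\frac12+\frac{d-1}{2}=\frac d2$) and then Bernstein's inequality $\|\varphi_j(D)h\|_{L^q_x}\lesssim 2^{jd(1/\rho-1/q)}\|\varphi_j(D)h\|_{L^\rho_x}$ with $d(\tfrac1\rho-\tfrac1q)=\tfrac{d-3}{4}$ (valid since $q>\rho$ for $d\geq 4$), to get, for fixed $\omega$, $\|\varphi_j(D)S(t)Z_0(\cdot,\omega)\|_{L^4_tL^q_x}\lesssim 2^{j(d-3)/4}\|\varphi_j(D)Z_0(\cdot,\omega)\|_{L^2_x}$. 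Pulling $L^2_\omega$ outside by Minkowski and inserting this into the Besov norm, the low-frequency blocks ($j<0$) are summable against the weight $1$ because $2^{j(d-3)/2}\leq 1$, contributing $\lesssim\|Z_0\|_{L^2_\omega L^2_x}^2$, while the high-frequency blocks ($j\geq0$) carry the weight $2^{j/2}$, so that $\sum_{j\geq0}2^{j/2}\,2^{j(d-3)/2}\|\varphi_j Z_0\|_{L^2_\omega L^2_x}^2=\sum_{j\geq0}2^{2js_c}\|\varphi_j Z_0\|_{L^2_\omega L^2_x}^2\lesssim\|Z_0\|_{L^2_\omega H^{s_c}_x}^2$, using $2s_c=d-2$; a final Minkowski in $t$ exchanges the $L^4_t$ norm with the $\ell^2_j$ sum. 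Combining the four estimates proves $\|S(t)Z_0\|_Z\lesssim\|Z_0\|_{L^2_\omega H^{s_c}_x}$, which in particular gives $S(t)Z_0\in E_Z$.
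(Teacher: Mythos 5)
Your proof is correct and is essentially the argument the paper intends: the paper's own proof is a one-line reference to Proposition 4.2 of \cite{CdS}, justified precisely by the observation that $S(t)$ enjoys the same Strichartz estimates as $e^{it\Delta}$ (Proposition \ref{pr:Strichartz-free-evolution}), and your Minkowski--Strichartz reduction for the first three pieces together with the frequency-localized Strichartz-plus-Bernstein argument for the $L^4_tB_q^{0,1/4}$ piece is the standard way of carrying that out. One harmless slip: $2^{j/2}\,2^{j(d-3)/2}=2^{js_c}$, not $2^{2js_c}$, but since $2^{js_c}\le 2^{2js_c}$ for $j\ge0$ the high-frequency sum is still controlled by $\|Z_0\|_{L^2_\omega H^{s_c}_x}^2$ and the conclusion stands.
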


\begin{proof}

The proof is exactly the same as that of Proposition 4.2 in \cite{CdS}, since the group $S(t)$ enjoys the same Strichartz estimates as the usual Schr\"odinger group $e^{it\Delta}$ by Proposition \ref{pr:Strichartz-free-evolution}.

\end{proof}

\begin{proposition} \label{pr:free-evolution-Z0}

Under the assumptions of Proposition \ref{pr:Strichartz-free-evolution}, if $\la \xi \ra^{2\lfloor s_c\rfloor}g\in W^{2,1} $ and $\Tilde \theta \in W^{4,1}$, then for any $Z_0\in L^2_\omega H^{s_c}_x$ one has
$$
\| \E [\overline{Y(x+y)}S(t)Z_0(x)+\overline{S(t)Z_0(x+y)}Y(x)]\|_{L^\infty_y L^{\frac{d+2}{2}}_{t,x}}\lesssim \| Z_0\|_{L^2_\omega H^{s_c}_x}
$$
and moreover if $Z_0\in L_x^\frac{2d}{d+2} L_\omega^2 $ then $\E [\overline{Y(x+y)}S(t)Z_0(x)+\overline{S(t)Z_0(x+y)}Y(x)]\in C_yL_t^{2}B_2^{-\frac{1}{2},s_c}$, with: $$
\| \mathbb E[\overline{Y(x)}S(t)T_y Z_0(x)] \|_{L^\infty_y L^2_t  B_2^{-1/2,s_c}}\lesssim \| Z_0\|_{L^{2d/(d+2)}_x L^2_\omega }+\| Z_0\|_{H^{s_c}_x L^2_\omega }.
$$

\end{proposition}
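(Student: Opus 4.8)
The plan is to reduce the term to a single bilinear quantity, to bound it separately in the two norms defining $E_V$ (see \eqref{normV}), and to observe that continuity in $y$ follows for free.

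\emph{Reduction and Fourier formula.} Since $Y(t)=S(t)Y_0$ with $Y_0=\int_{\R^d}f(\xi)e^{i\xi x}\,dW(\xi)$, the term in the statement equals $W(x,y,t)+\overline{W(x+y,-y,t)}$, where $W(x,y,t):=\E[\overline{Y(x+y,t)}\,S(t)Z_0(x)]$. As the norms $L^{(d+2)/2}_{t,x}$ and $L^2_tB_2^{-1/2,s_c}_x$ are invariant under translations in $x$ and under complex conjugation, it suffices to bound $\sup_y\|W(\cdot,y,\cdot)\|$ in each of them and to prove $y\mapsto W(\cdot,y,\cdot)$ is continuous. Taking the Fourier transform in $x$ gives
\[
\mathcal F_xW(\zeta,y,t)=(2\pi)^{\frac d2}\int_{\R^d}f(\xi)\,e^{-i\xi y}\,K(\xi,\xi+\zeta)\,e^{-it(\theta(\xi+\zeta)-\theta(\xi))}\,d\xi ,
\]
where $K(\xi,\eta)\,d\xi:=\E[\overline{dW(\xi)}\,\widehat{Z_0}(\eta)]$ is the cross-covariance of $Z_0$ with the white noise. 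Bessel's inequality for the Wiener integral gives $\|K(\cdot,\eta)\|_{L^2_\xi}\le\|\widehat{Z_0}(\eta)\|_{L^2_\omega}$; applying the same operation to $\an{\nabla}^{s_c}Z_0$ gives $\|\an{\eta}^{s_c}K\|_{L^2_{\xi,\eta}}\lesssim\|Z_0\|_{H^{s_c}_xL^2_\omega}$; and Hausdorff--Young gives $\big\|\,\|K(\cdot,\eta)\|_{L^2_\xi}\,\big\|_{L^{2d/(d-2)}_\eta}\lesssim\|Z_0\|_{L^{2d/(d+2)}_xL^2_\omega}$. In all that follows implicit constants may depend on $\lambda_*$, on $\tilde\theta$ through the standing hypotheses, and on $\|\an{\xi}^{2\lfloor s_c\rfloor}g\|_{W^{2,1}}$.

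\emph{The bound in $L^\infty_yL^{(d+2)/2}_{t,x}$.} This one is soft. Cauchy--Schwarz in $\omega$ and the correlation formula \eqref{id:correlation-function-Yf} yield $|W(x,y,t)|\le\|g\|_{L^1}^{1/2}\,\|S(t)Z_0(x)\|_{L^2_\omega}$, uniformly in $y$. Then Minkowski's inequality and the Strichartz estimate with derivatives of Corollary \ref{cor:strichartz}, applied to $(q_2,r_2,s_2)=(\tfrac{d+2}2,\tfrac{d+2}2,\tfrac d2-2)$ (for which $\tfrac2{q_2}+\tfrac d{r_2}+s_2=\tfrac d2$ and $s_2=s_c-1\ge0$ when $d\ge4$), give
\[
\|W(\cdot,y,\cdot)\|_{L^{(d+2)/2}_{t,x}}\lesssim\|S(t)Z_0\|_{L^{(d+2)/2}_{t,x}L^2_\omega}\lesssim\|Z_0\|_{H^{s_c-1}_xL^2_\omega}\le\|Z_0\|_{H^{s_c}_xL^2_\omega}.
\]
Continuity in $y$ follows from $\|Y(\cdot+y,t)-Y(\cdot+y_0,t)\|_{L^2_\omega}^2=\int f^2(\xi)|e^{i\xi y}-e^{i\xi y_0}|^2\,d\xi\to0$ as $y\to y_0$, by dominated convergence and independently of $(x,t)$.

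\emph{The bound in $L^\infty_yL^2_tB_2^{-1/2,s_c}_x$ --- the crux.} Here Cauchy--Schwarz in $\omega$ is too wasteful: it would require $\|S(t)Z_0\|_{L^2_tH^{s_c}_x}$, which fails since $(2,2)$ is not Schr\"odinger-admissible, so one must extract the dispersion hidden in the phase $e^{-it(\theta(\xi+\zeta)-\theta(\xi))}$. Fix $\zeta\ne0$ and write $\xi=\xi^\perp+s\tfrac\zeta{|\zeta|}$ with $\xi^\perp\perp\zeta$; by the uniform ellipticity \eqref{id:uniform-ellipticity-theta}, $\partial_s(\theta(\xi+\zeta)-\theta(\xi))=\int_0^1\tfrac{\zeta^\top}{|\zeta|}\nabla^{\otimes2}\theta(\xi+u\zeta)\zeta\,du\in[\lambda_*|\zeta|,C|\zeta|]$ (the upper bound because $\nabla^{\otimes2}\tilde\theta\in L^\infty$), so $\mu:=\theta(\xi+\zeta)-\theta(\xi)$ is an admissible coordinate along $\zeta$. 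Plancherel in $t$, Cauchy--Schwarz in $\xi^\perp$ against a fixed weight $\an{\xi^\perp}^{-\alpha}$ with $\alpha>\tfrac{d-1}2$, and reverting to the $s$-variable produce
\[
\|\mathcal F_xW(\zeta,y,\cdot)\|_{L^2_t}^2\;\lesssim_\alpha\;\frac1{\lambda_*|\zeta|}\int_{\R^d}\an{\xi}^{2\alpha}f(\xi)^2\,|K(\xi,\xi+\zeta)|^2\,d\xi ,
\]
and the gain $|\zeta|^{-1}$ is exactly what the index $-\tfrac12$ of $B_2^{-1/2,s_c}$ absorbs. Multiplying by the Besov weight $m(\zeta)^2\approx|\zeta|^{-1}\mathbf{1}_{|\zeta|\le1}+|\zeta|^{2s_c}\mathbf{1}_{|\zeta|>1}$, integrating in $\zeta$, using Fubini and substituting $\zeta=\eta-\xi$ reduces the whole Besov norm to a bound by
\[
\int_{\R^d}\!\!\int_{\R^d}\an{\xi}^{2\alpha}f(\xi)^2\;m(\eta-\xi)^2|\eta-\xi|^{-1}\,|K(\xi,\eta)|^2\,d\xi\,d\eta .
\]
On the region $|\eta-\xi|\le1$ the factor $m(\eta-\xi)^2|\eta-\xi|^{-1}\approx|\eta-\xi|^{-2}$ is locally integrable on $\R^d$ ($d\ge4$), and a Hölder/Young argument in $\eta$ exploiting the $L^{2d/(d-2)}_\eta$-integrability of $\|K(\cdot,\eta)\|_{L^2_\xi}$ (and the decay $\an{\xi}^{2\lfloor s_c\rfloor}g\in W^{2,1}$ to absorb the $\xi$-weights) bounds this contribution by $\|Z_0\|_{L^{2d/(d+2)}_xL^2_\omega}^2$; on $|\eta-\xi|>1$ one uses $|\eta-\xi|^{2s_c-1}\lesssim\an{\eta}^{2s_c}+\an{\xi}^{2s_c}$ together with $\an{\eta}^{s_c}K\in L^2_{\xi,\eta}$ to bound it by $\|Z_0\|_{H^{s_c}_xL^2_\omega}^2$. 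Continuity in $y$ follows once more by dominated convergence, carrying the factor $e^{-i\xi y}-e^{-i\xi y_0}$ unchanged through the same estimate.

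\emph{Main obstacle.} The difficulty is concentrated in the last bound. One controls the cross-covariance $K$ only in an $L^2$ sense in the ``noise'' variable $\xi$ (Bessel), never pointwise, so every estimate must remain $L^2$-in-$\zeta$ --- which is precisely why the target is a Besov (i.e.\ $\ell^2$-in-dyadic-frequency) norm rather than a supremum --- and the $\zeta$-integration must be performed before the bounds on $K$ are invoked. What remains is the careful, regime-by-regime matching of the dispersive gain $|\zeta|^{-1}$, the two Besov weights, the Hausdorff--Young exponent $\tfrac{2d}{d-2}$ at low frequency, and the decay encoded in $\an{\xi}^{2\lfloor s_c\rfloor}g\in W^{2,1}$ at high frequency.
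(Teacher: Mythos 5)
Your reduction to $W(x,y,t)=\E[\overline{Y(x+y,t)}S(t)Z_0(x)]$ and your treatment of the $L^\infty_yL^{(d+2)/2}_{t,x}$ bound are correct and essentially match the paper's (the paper interpolates between the two Strichartz norms of $E_Z$ instead of using a single non-admissible pair with Sobolev loss; both work). For the Besov bound your route is genuinely different: the paper argues by duality, pairing against $U\in L^2_t\dot H^{1/2}_x$ and invoking Proposition \ref{prop:magicProposition} to place $\int_0^\infty S(-t)YU\,dt$ in $L^{2d/(d-2)}_xL^2_\omega$, whereas you compute $\mathcal F_xW$ directly through the cross-covariance $K(\xi,\eta)$ and re-derive the $|\zeta|^{-1}$ gain from the transversality $\partial_s(\theta(\xi+\zeta)-\theta(\xi))\geq\lambda_*|\zeta|$. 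The coarea/Plancherel computation producing that gain is correct, but two steps after it do not close. First, the Cauchy--Schwarz against $\an{\xi^\perp}^{-\alpha}$ with $\alpha>\frac{d-1}{2}$ leaves the weight $\an{\xi}^{2\alpha}g(\xi)$ sitting against $|K(\xi,\xi+\zeta)|^2$, and since $K$ is only controlled in $L^2_\xi$ this weight must be taken out in $L^\infty_\xi$; the hypotheses give only $L^1$-based control of $g$ with weight $2\lfloor s_c\rfloor\leq d-2<2\alpha$, so $\an{\xi}^{2\alpha}g\in L^\infty$ is neither assumed nor implied. (This is repairable: Cauchy--Schwarz against $g$ itself on each level set, controlling $\sup$ of hyperplane integrals of $\an{\xi}^{2s_c}g$ by its $W^{1,1}$ norm.)

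The second problem is the crux and is not repaired by rearranging exponents. Your final quantity
\[
\iint_{|\eta-\xi|\le 1}|\eta-\xi|^{-2}\,G(\xi)\,|K(\xi,\eta)|^2\,d\xi\,d\eta
\]
cannot be bounded by any ``H\"older/Young argument in $\eta$'' using only the three marginal controls you list on $K$. Indeed, take $K(\xi,\eta)=h(\eta)\,\epsilon^{-d/2}\phi((\xi-\eta)/\epsilon)$ with $\|\phi\|_{L^2}=1$: then $\|K(\cdot,\eta)\|_{L^2_\xi}=h(\eta)$ stays bounded in $L^2_\eta\cap L^{2d/(d-2)}_\eta$ (and with the $\an{\eta}^{s_c}$ weight), while the integral above blows up like $\epsilon^{-2}$ as $\epsilon\to0$. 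The marginal information simply does not see the diagonal singularity $|\eta-\xi|^{-2}$. The inequality you need is in fact true, but proving it requires returning to physical space: by Plancherel in $\eta$ the factor $|\eta-\xi|^{-1}$ becomes $|\nabla_x|^{-1}$ applied to $e^{-i\xi x}\check K(\xi,x)$, Hardy--Littlewood--Sobolev converts this to an $L^{2d/(d+2)}_x$ norm, and Minkowski (using $\tfrac{2d}{d+2}\le2$) yields $\|Z_0\|_{L^{2d/(d+2)}_xL^2_\omega}$ --- which is precisely the duality with $L^{2d/(d-2)}_x$ that the paper implements through Proposition \ref{prop:magicProposition}. As written, therefore, the key low-frequency step of your argument is missing; once it is replaced by this physical-space/duality step, your computation becomes a legitimate primal version of the paper's proof.
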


\begin{proof}

We recall that $T_y$ is the translation $T_y u(x)=u(x+y)$. Noticing that the second term equals the first one up to complex conjugation and changing variables $(x,y)\mapsto (x+y,-y)$, it suffices to bound
$$
\mathbb E[\overline{Y(x)}S(t)T_y Z_0(x)].
$$

\medskip

\noindent \underline{The $C_y L^{(d+2)/2}_{t,x}$ estimate}. By Proposition \ref{prop:ID-Z}, we have
$$
\| S(t)Z_0\|_{L^{p}_{t,x}L^2_\omega}+\| S(t)Z_0\|_{L^{d+2}_{t,x}L^2_\omega}\lesssim \| Z_0\|_{L^2_\omega H^{s_c}_x}.
$$
As $p\leq \frac{d+2}{2}\leq d+2$ this implies
$$
\| S(t)Z_0\|_{L^{\frac{d+2}{2}}_{t,x}L^2_\omega}\lesssim \| Z_0\|_{L^2_\omega H^{s_c}_x}
$$
by interpolation. From the formula \eqref{id:correlation-function-Yf} giving the correlation function of $Y$, we deduce $Y\in L^\infty_{t,x}L^2_\omega$. These estimates imply, via the Cauchy-Schwarz and H\"older inequalities,
$$
\| \mathbb E[\overline{Y(x)}S(t)T_y Z_0(x)] \|_{L^{\frac{d+2}{2}}_{t,x}}\lesssim \| T_y Z_0\|_{L^2_\omega H^{s_c}}.
$$
We recall that Lebesgue norms are invariant under translations, and that for any $u\in L^2_\omega H^{s_c}$, the curve $y\mapsto T_y u$ is continuous in $L^2_\omega H^{s_c}_x$. Hence $\mathbb E[\overline{Y(x)}S(t)T_y Z_0(x)]\in C_y L^{(d+2)/2}_{t,x}$ with
$$
\| \mathbb E[\overline{Y(x)}S(t)T_y Z_0(x)] \|_{L^\infty_y L^{\frac{d+2}{2}}_{t,x}}\lesssim \| T_y Z_0\|_{L^2_\omega H^{s_c}}.
$$

\noindent \underline{The $C_y L^2_t \Dot{H}_x^{-1/2} $ estimate}. We reason by duality. For $U\in L^2_t \Dot{H}^{1/2}_x$ with $\| U\|_{L^2_t \Dot{H}_x^{1/2}}=1$ we will prove
\begin{equation} \label{bd:correlation-function-for-free-evolution-inter}
\int_0^\infty  \mathbb E[\overline{Y(x)}S(t)T_y Z_0(x)] \bar U dxdt \lesssim \| T_y Z_0\|_{L^{2d/(d+2)}_x L^2_\omega }
\end{equation}
which will imply
$$
\| \mathbb E[\overline{Y(x)}S(t)T_y Z_0(x)]  \|_{L^2_t\Dot{H}^{-1/2}_x} \lesssim \| T_y Z_0\|_{L^{2d/(d+2)}_x L^2_\omega }.
$$
The above estimate implies $\mathbb E[\overline{Y(x)}S(t)T_y Z_0(x)]\in C_y L^2_t\Dot{H}_x^{-1/2}$ with 
\begin{equation} \label{bd:correlation-from-free-evolution--1/2}
\| \mathbb E[\overline{Y(x)}S(t)T_y Z_0(x)]  \|_{L^\infty_y L^2_t\Dot{H}^{-1/2}_x} \lesssim \| Z_0\|_{L^{2d/(d+2)}_x L^2_\omega }.
\end{equation}
We are thus left to proving \eqref{bd:correlation-function-for-free-evolution-inter}. Using Fubini and $S(t)^*=S(-t)$ we have
$$
\int_0^\infty\mathbb E[\overline{Y(x)}S(t)T_y Z_0(x)] \bar U dxdt= \int \mathbb E \left[ T_y Z_0 \int_0^\infty \overline{S(-t)YU}dt  \right] dx.
$$
By Proposition \ref{prop:magicProposition} with $\sigma_1=1$, $\sigma=0$, $p_1=\infty$ and $q_1=\frac{2d}{d-2}$ we have $ \int_0^\infty S(-t)YU dt \in L^{2d/(d-2)}_xL^2_\omega$ with $\| \int_0^\infty S(-t)YU \|_{ L^{2d/(d-2)}_xL^2_\omega}\lesssim 1$. This estimate implies \eqref{bd:correlation-function-for-free-evolution-inter} by H\"older's inequality.

\medskip

\noindent \underline{The $\mathcal C_y L^2_t H_x^{s_c} $ estimate}. We notice that by the Leibniz rule, in order to estimate 
\[
\nabla_x^{\lfloor s_c\rfloor}(YS(t)T_yZ_0),
\]
it is sufficient to estimate $\partial^\alpha YS(t)T_y \partial^\beta Z_0$ for all multi-indices $\alpha,\beta \in \mathbb N^d$ with $\sum_i \alpha_i+\sum_i \beta_i=\lfloor s_c \rfloor$. We pick such $\alpha$ and $\beta$, and, for again a duality argument, take $U\in H^{\lfloor s_c\rfloor-s_c}_x L^2_\omega$ with $\|U\|_{H^{\lfloor s_c\rfloor-s_c}_x L^2_\omega}=1$. We assume $U$ is supported away from the origin in frequencies, i.e. $\hat U(\xi)=0$ for $|\xi|\leq 1$. We will show
\begin{equation} \label{bd:correlation-function-for-free-evolution-inter-2}
\int_0^\infty  \mathbb E[\overline{\partial^\alpha Y(x)}S(t)T_y \partial^\beta Z_0(x)] \bar U dxdt \lesssim \| T_y Z_0\|_{H^{s_c}_x L^2_\omega }.
\end{equation}
This estimate, via duality, and combined with \eqref{bd:correlation-from-free-evolution--1/2} for the low frequencies, will show
$$
\| \mathbb E[\overline{Y(x)}S(t)T_y Z_0(x)]  \|_{L^2_tH^{s_c}_x} \lesssim \| T_y Z_0\|_{L^{2d/(d+2)}_x L^2_\omega }+\| T_y Z_0\|_{H^{s_c}_x L^2_\omega }.
$$
The above estimate implies $\mathbb E[\overline{Y(x)}S(t)T_y Z_0(x)]\in C_y L^2_tH_x^{s_c}$ with 
\begin{equation} \label{bd:correlation-from-free-evolution-s_c}
\| \mathbb E[\overline{Y(x)}S(t)T_y Z_0(x)]  \|_{L^\infty_y L^2_tH^{s_c}_x} \lesssim \| Z_0\|_{L^{2d/(d+2)}_x L^2_\omega }+\| T_y Z_0\|_{H^{s_c}_x L^2_\omega }.
\end{equation}
It remains to show \eqref{bd:correlation-function-for-free-evolution-inter-2}. We have
\begin{equation} \label{bd:correlation-function-for-free-evolution-inter-3}
\int_0^\infty \mathbb E[\overline{\partial^\alpha Y(x)}S(t)T_y \partial^\beta Z_0(x)] \bar U dxdt= \int \mathbb E \left[ T_y \partial^\beta Z_0 \int_0^\infty \overline{S(-t)\partial^\alpha YU}dt  \right] dx.
\end{equation}
Note that
$$
\partial^\alpha Y(t,x)=\int f_\alpha (\xi)e^{i(\xi x-\theta(\xi)t)}dW(\xi)
$$
where $f_\alpha(\xi)= (i\xi_1)^{\alpha_1} ... (i\xi_d)^{\alpha_d} f(\xi)$. We apply Proposition \ref{prop:magicProposition} with momenta distribution function $f_\alpha$, $\sigma_1=0$, $\sigma=0$, $p_1=\infty$ and $q_1=2$ and obtain
\begin{equation} \label{bd:correlation-function-for-free-evolution-inter-4}
\| \int_0^\infty S(-t)\partial^\alpha YU \|_{L^2_{x,\omega}}\lesssim \| U\|_{L^2_t H^{-1/2}_x}\lesssim \| U\|_{L^2_t H^{\lfloor s_c\rfloor-s_c}_x}=1
\end{equation}
where for the before last inequality we used $\lfloor s_c\rfloor -s_c\in \{-1/2,0\}$ and that $U$ is located away from the origin in frequencies. Using the Cauchy-Schwarz inequality and \eqref{bd:correlation-function-for-free-evolution-inter-4}, the identity \eqref{bd:correlation-function-for-free-evolution-inter-3} implies \eqref{bd:correlation-function-for-free-evolution-inter-2} as desired.

\medskip

\noindent \underline{The $C_y L^2_t B_2^{-1/2,s_c} $ estimate}. Combining the previous $C_y L^2_t \Dot{H}_x^{-1/2} $ and $C_y L^2_t H_x^{s_c} $ estimates \eqref{bd:correlation-from-free-evolution--1/2} and \eqref{bd:correlation-from-free-evolution-s_c}, it follows that $\mathbb E[\overline{Y(x)}S(t)T_y Z_0(x)]\in C_y L^2_t B_2^{-1/2,s_c}$ with
$$
\| \mathbb E[\overline{Y(x)}S(t)T_y Z_0(x)] \|_{L^\infty_y L^2_t  B_2^{-1/2,s_c}}\lesssim \| Z_0\|_{L^{2d/(d+2)}_x L^2_\omega }+\| Z_0\|_{H^{s_c}_x L^2_\omega }.
$$

\end{proof}

\subsection{Conclusion and proof of the main Theorem}

In this subsection, we finish the proof of Theorem \ref{th:main}. The proof of the theorem relies on solving the fixed point problem (\ref{ptfixe2}). First we write the fixed point argument and then we prove the scattering result. 

We set: 

$$L= \begin{pmatrix}
0 & L_1+L_2\\ 0 & L_3+L_4
\end{pmatrix}.$$

Using Corollary \ref{corollary:l1} and \ref{corollary:l2} we have that $L_1+L_2$ is continuous from $E_V$ to $E_Z$, and using Proposition \ref{prop:invl3-l4}, $1-L_3-L_4$ is continuous, invertible and of continuous inverse on $E_V$. Then $1-L$ is continuous, invertible and of continuous inverse on $E_Z\times E_V$. Thus the problem (\ref{ptfixe2}) can be rewritten as: 

\begin{equation} \label{ptfixe3}
\begin{pmatrix} Z\\ V\end{pmatrix} = (1-L)^{-1} \mathcal B_{Z_0} \begin{pmatrix} Z\\ V\end{pmatrix} = (1-L)^{-1}\begin{pmatrix} \mathcal B^{(1)}_{Z_0}(Z,V)),  \\
\mathcal B^{(2)}_{Z_0}(Z,V)
    \end{pmatrix} 
\end{equation}
where we have set
\begin{align*}
\mathcal B^{(1)}_{Z_0}(Z,V)= S(t)Z_0+Q_1(Z,V)+Q_2(Z,V),\\
\mathcal B^{(2)}_{Z_0}(Z,V) =
\E[\overline{Y(x+y)}S(t)Z_0(x)+\overline{S(t)Z_0(x+y)}Y(x)] \\
+Q_3(Z,V)+Q_4(Z,V)+\E[\overline{Z(x+y)}Z(x)].
\end{align*}

We define the mapping: 

$$\Phi[Z_0]:\left\{ \begin{array}{rcl}
     E_Z\times E_V & \to & E_Z\times E_V\\
     \begin{pmatrix}
     Z \\ V
     \end{pmatrix} & \mapsto & (1-L)^{-1}\mathcal B_{Z_0} \begin{pmatrix} Z\\ V\end{pmatrix}
\end{array}
\right . . $$

Let us denote $E_0:=L_\omega^2H^{s_c}$ the space for the initial datum. We are going to show that for $Z_0$ small enough in $E_0$, the mapping $\Phi[Z_0]$ is a contraction on $B_{E_Z\times E_V}(0,R\lvert\lvert Z_0 \rvert\rvert_{E_0})=:B$ for some constant $R>0$. For simplification we denote the norm $\lvert\lvert \cdot\rvert\rvert_{E_Z\times E_V}$ by $\lvert\lvert\cdot\rvert\rvert$. 

By Corollary \ref{corollary:l1} and \ref{corollary:l2} and Proposition \ref{prop:invl3-l4} we have: 

$$\bigg\lvert\bigg\lvert\Phi[Z_0] \begin{pmatrix}
     Z \\ V
     \end{pmatrix}\bigg\rvert\bigg\rvert \lesssim \bigg\lvert\bigg\lvert B_{Z_0} \begin{pmatrix} Z\\ V\end{pmatrix} \bigg\rvert\bigg\rvert. $$
     
We decompose $\mathcal B_{Z_0}$ as: 
$$\mathcal B_{Z_0}=C_{Z_0}+Q,$$

where: $$C_{Z_0}=\begin{pmatrix}
S(t)Z_0 \\ \E[\overline{Y(x+y)}S(t)Z_0(x)+\overline{S(t)Z_0(x+y)}Y(x)]
\end{pmatrix}$$ is the constant part and: $$Q(Z,V)=\begin{pmatrix}
Q_1(Z,V)+Q_2(Z,V) \\Q_3(Z,V)+Q_4(Z,V)
\end{pmatrix},$$the quadratic part.

By Propositions \ref{prop:ID-Z} and \ref{pr:free-evolution-Z0} we have: 

$$\lvert\lvert C_{Z_0}\rvert\rvert\lesssim \lvert\lvert Z_0\rvert\rvert_{Z_0}. $$

By Propositions \ref{prop:estQ34} and \ref{prop:estQ12} we have that for any $(Z,V)\in B$: 

$$\lvert \lvert Q(Z,V)\rvert\rvert\lesssim \lvert\lvert Z\rvert\rvert_{E_Z}\lvert\lvert V\rvert\rvert_{E_V}\leq R^2 \lvert\lvert Z_0\rvert\rvert_{E_0}^2.$$

Moreover, by bilinearity of $(Z,V)\mapsto Q_k(Z,V)$ for $k=1,2,3,4$ we get that for any $(Z,V), (Z',V')\in B$: 

$$ \lvert\lvert Q(Z,V)-Q(Z',V')\rvert\rvert\lesssim \lvert\lvert (Z,V)-(Z',V')\rvert\rvert (\lvert\lvert (Z,V)\rvert\rvert + \lvert\lvert (Z',V')\rvert\rvert) $$

thus: 

$$\lvert\lvert Q(Z,V)-Q(Z',V')\rvert\rvert \lesssim R\lvert\lvert Z_0\rvert\rvert_{Z_0}\lvert\lvert (Z,V)-(Z',V')\rvert\rvert.$$

From the above estimates, one gets that $\Phi[Z_0]$ is a contraction on $B_{E_Z\times E_V}(0,R\lvert\lvert Z_0\rvert\rvert_{E_0})$, for some universal constant $R>0$, for $\lvert\lvert Z_0\rvert\rvert_{Z_0}$ small enough. By the Banach's fixed point theorem, we get the existence and uniqueness of a solution to (\ref{ptfixe2}) in $B_{E_Z\times E_V}(0,R\lvert\lvert Z_0\rvert\rvert_{E_0})$. 

We can now prove the scattering result of the theorem. We write: 

\begin{align*}
    & Z(t)=S(t)\bigg( Z_0-i\int_0^\infty S(-\tau)\Big[\big(w*V(\cdot,0)\big)Y + \int w(z)V(x,z)Y(x+z)\, dz \Big]\\
       & \qquad \qquad -i\int_0^\infty S(-\tau)\Big[ \big(w*V(\cdot,0)\big)Z + \int w(z)V(x,z)Z(x+z)\, dz\Big] \\ 
       & \qquad \qquad +i\int_t^\infty S(-\tau)\Big[\big(w*V(\cdot,0)\big)Y + \int w(z)V(x,z)Y(x+z)\, dz \Big]\\
       & \qquad \qquad +i\int_t^\infty S(-\tau)\Big[ \big(w*V(\cdot,0)\big)Z + \int w(z)V(x,z)Z(x+z)\, dz\Big]\bigg ).
\end{align*}

By Corollary \ref{corollary:l1} and \ref{corollary:l2} we get that $\int_0^\infty S(-\tau)\Big[\big(w*V(\cdot,0)\big)Y + \int w(z)V(x,z)Y(x+z)\, dz \Big]\in L_\omega^2,H^{s_c}$ and that: 
$$\bigg\lvert\bigg\lvert \int_t^\infty S(-\tau)\Big[\big(w*V(\cdot,0)\big)Y + \int w(z)V(x,z)Y(x+z)\, dz \Big] \bigg\rvert\bigg\rvert_{L_\omega^2,H^{s_c}}\to 0, $$
as $t\to +\infty$.

Moreover, by Proposition \ref{prop:estQ12} we get that $\int_0^\infty S(-\tau)\Big[\big(w*V(\cdot,0)\big)Z + \int w(z)V(x,z)Z(x+z)\, dz \Big]\in L_\omega^2,H^{s_c}$ and that: 
$$\bigg\lvert\bigg\lvert \int_t^\infty S(-\tau)\Big[\big(w*V(\cdot,0)\big)Z + \int w(z)V(x,z)Z(x+z)\, dz \Big] \bigg\rvert\bigg\rvert_{L_\omega^2,H^{s_c}}\to 0, $$
as $t\to +\infty$.

Therefore, there exists $Z_\infty \in \ L_\omega^2,H^{s_c}$ such that, as $t\to \infty$: $$Z(t)=S(t)Z_\infty +o_{L_\omega^2,H^{s_c}}(1). $$

Thus, by definition of $Z$, we get: $$X(t)=Y+S(t)Z_\infty +o_{L_\omega^2,H^{s_c}}(1),$$

concluding the proof of Theorem \ref{th:main}.
\newpage

\bibliographystyle{amsplain}
\bibliography{biblio}

\end{document}